
\documentclass[a4paper,11pt]{amsart}

\usepackage{amsfonts,amssymb}
\usepackage{textcomp, color}

\theoremstyle{plain}

\newtheorem*{thmA}{Theorem A}
\newtheorem*{thmB}{Theorem B}

\newtheorem{thm}{Theorem}[section]
\newtheorem{lem}[thm]{Lemma}
\newtheorem{pro}[thm]{Proposition}
\newtheorem{cor}[thm]{Corollary}

\theoremstyle{definition}

\newtheorem{dfn}[thm]{Definition}

\newtheorem{exa}[thm]{Example}

\numberwithin{equation}{section}

\newcommand{\F}{\mathbb{F}}
\newcommand{\Z}{\mathbb{Z}}
\newcommand{\Q}{\mathbb{Q}}
\newcommand{\N}{\mathbb{N}}

\newcommand{\C}{\mathbb{C}}
\renewcommand{\P}{\mathbb{P}}

\newcommand{\No}{\mathcal{N}}
\newcommand{\MM}{\mathcal{M}}
\newcommand{\WW}{\mathcal{W}}


\begin{document}

\title{Beauville structures in finite $p$-groups}

\author[G.A.\ Fern\'andez-Alcober]{Gustavo A.\ Fern\'andez-Alcober}
\address{Department of Mathematics\\ University of the Basque Country UPV/EHU\\
48080 Bilbao, Spain}
\email{gustavo.fernandez@ehu.eus}

\author[\c{S}.\ G\"ul]{\c{S}\"ukran G\"ul}
\address{Department of Mathematics\\ Middle East Technical University\\
06800 Ankara, Turkey}
\email{gsukran@metu.edu.tr}

\keywords{Beauville groups; finite $p$-groups; Nottingham group\vspace{3pt}}

\thanks{Both authors acknowledge financial support from the Spanish Government, grant MTM2014-53810-C2-2-P, and from the Basque Government, grants IT753-13 and IT974-16.
The first author is also supported by the Spanish Government, grant
MTM2011-28229-C02, and the second author  is supported by T\"UB\.{I}TAK-B\.{I}DEB-2214/A}

\begin{abstract}
We study the existence of (unmixed) Beauville structures in finite $p$-groups, where $p$ is a prime.
First of all, we extend Catanese's characterisation of abelian Beauville groups to finite $p$-groups satisfying certain conditions which are much weaker than commutativity.
This result applies to all known families of $p$-groups with a good behaviour with respect to powers: regular $p$-groups, powerful $p$-groups and more generally potent $p$-groups, and (generalised) $p$-central $p$-groups.
In particular, our characterisation holds for all $p$-groups of order at most $p^{\hspace{.5pt}p}$, which allows us to determine the exact number of Beauville groups of order $p^5$, for $p\ge 5$, and of order $p^6$, for $p\ge 7$.
On the other hand, we determine which quotients of the Nottingham group over $\F_p$ are Beauville groups, for an odd prime $p$.
As a consequence, we give the first explicit infinite family of Beauville $3$-groups, and we show that there are Beauville $3$-groups of order $3^n$ for every
$n\ge 5$.
\end{abstract}

\maketitle

\section{Introduction}

A \emph{Beauville surface\/} (of unmixed type) is a compact complex surface isomorphic to a quotient
$(C_1\times C_2)/G$, where $C_1$ and $C_2$ are algebraic curves of genera at least $2$ and $G$ is a finite group acting freely on $C_1\times C_2$ by holomorphic transformations, in such a way that $C_i/G\cong \P_1(\C)$ and the covering map $C_i\rightarrow C_i/G$ is ramified over three points for $i=1,2$.
Then the group $G$ is said to be a \emph{Beauville group\/}.

The question as to which finite groups are Beauville groups has received considerable attention in recent times.
It can be reformulated in purely group-theoretical terms as follows.
Given two elements $x$ and $y$ of a group $G$, we define
\[
\Sigma(x,y)
=
\bigcup_{g\in G} \,
\Big( \langle x \rangle^g \cup \langle y \rangle^g \cup \langle xy \rangle^g \Big),
\]
that is, the union of the subgroups that belong to the conjugacy classes of $\langle x \rangle$,
$\langle y \rangle$ and $\langle xy \rangle$.
An (unmixed) \emph{Beauville structure\/} for $G$ is then a pair of generating sets $\{x_1,y_1\}$
and $\{x_2,y_2\}$ of $G$ such that $\Sigma(x_1,y_1) \cap \Sigma(x_2,y_2)=1$, and $G$ is a Beauville group if and only if $G$ possesses a Beauville structure.
In particular, Beauville groups are $2$-generator groups.

Research activity around Beauville groups has been very intense since the beginning of this century; see, for example, the recent survey papers
\cite{bos,fai,jon}.
We briefly mention some results that we want to highlight.
In 2000, Catanese \cite{cat} proved that a finite abelian group is a Beauville group if and only if it is isomorphic to $C_n\times C_n$, where  $\gcd(n,6)=1$.
In 2006, Bauer, Catanese, and Grunewald \cite[Conjecture 7.17]{BCG} conjectured that every finite non-abelian simple group other than $A_5$ is a Beauville group.
By using probabilistic methods, Garion, Larsen, and Lubotzky \cite{GLL} showed in 2012 that the conjecture is true if the order of the group is large enough.
Soon afterwards, Guralnick and Malle \cite{GM} gave a complete proof of the conjecture, and Fairbairn, Magaard and Parker \cite{FMP,FMPc}
proved that actually all finite quasisimple groups are Beauville, with the only exceptions of $A_5$ and $SL_2(5)$.

At the other side of the spectrum, if $p$ is a prime, the knowledge about which finite $p$-groups are Beauville groups is very scarce, and is restricted to either groups of small order or with a very simple structure.
Barker, Boston, and Fairbairn \cite{BBF} have determined all Beauville $p$-groups of order at most $p^4$, and have found estimates for the number of Beauville groups of orders $p^5$ and $p^6$.
They have also proved the existence of a non-abelian Beauville $p$-group of order $p^n$ for every $p\ge 5$ and every $n\ge 3$, and have shown that the smallest non-abelian Beauville $p$-groups for $p=2$ and $p=3$ are of order $2^7$ and $3^5$, respectively.
Some of these results from \cite{BBF} rely on computations with the computer algebra system MAGMA.
On the other hand, as a consequence of the main theorem in \cite{BBPV}, there are Beauville $2$-groups of arbitrarily high order.
Finally, let us mention that Stix and Vdovina have shown in \cite[Theorem 3]{SV} that a split metacyclic
$p$-group $G$ is a Beauville group if and only if $p\ge 5$ and $G$ is a semidirect product of two cyclic groups of the same order.

Our goal in this paper is to deepen the knowledge about Beauville $p$-groups.
Notice that Catanese's characterisation of abelian Beauville groups implies that a $2$-generator abelian $p$-group of exponent $p^e$ is a Beauville group if and only if $p\ge 5$ and
$|G^{p^{e-1}}|=p^2$.
In Section 2, we provide a generalisation of this result to a wide class of non-abelian $p$-groups.

\begin{thmA}
Let $G$ be a $2$-generator finite $p$-group of exponent $p^e$, and suppose that $G$ satisfies one of the following conditions:
\begin{enumerate}
\item
For $x,y\in G$, we have $x^{p^{e-1}} = y^{p^{e-1}}$ if and only if $(xy^{-1})^{p^{e-1}} = 1$.
\item
$G$ is a potent $p$-group.
\end{enumerate}
Then $G$ is a Beauville group if and only if $p\ge 5$ and $|G^{p^{e-1}}|\ge p^2$.
\end{thmA}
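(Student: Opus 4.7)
The plan is to handle necessity and sufficiency separately, preceded by a structural observation. Under either hypothesis, I would first show that $\Omega := \{g \in G : g^{p^{e-1}} = 1\}$ is a subgroup of $G$. Under (i), substituting $h^{-1}$ for $y$ gives the equivalent form $(gh)^{p^{e-1}} = 1 \Leftrightarrow g^{p^{e-1}} h^{p^{e-1}} = 1$, so $g, h \in \Omega$ immediately yields $gh \in \Omega$; under (ii), this is a standard property of potent $p$-groups. A consequence is that any generating pair $\{x, y\}$ of $G$ contains a generator of order $p^e$, since otherwise $\{x, y\} \subseteq \Omega$ would force $G = \Omega$ and contradict $G$ having exponent $p^e$. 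Hence in any candidate Beauville structure $\{x_1, y_1\}, \{x_2, y_2\}$, each $\Sigma(x_i, y_i)$ contains an element $a_i$ of order $p^e$.

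For necessity: if $|G^{p^{e-1}}| = p$, then $G^{p^{e-1}}$ is a characteristic subgroup of order $p$ in a $p$-group, hence central and cyclic, and both $a_1^{p^{e-1}}$ and $a_2^{p^{e-1}}$ generate it; consequently some non-trivial power of $a_1^{p^{e-1}}$ equals $a_2^{p^{e-1}}$, producing a common element of $\Sigma(x_1, y_1) \cap \Sigma(x_2, y_2)$, a contradiction. For the restriction to $p \ge 5$: the three cyclic subgroups in each triangle project to three distinct lines through the origin of $G/\Phi(G) \cong \F_p^2$, and two disjoint triples of lines require $p + 1 \ge 6$. For $p \in \{2, 3\}$, at least one line must be shared between the two triangles, and combining this coincidence with the structural control on the $(p^{e-1})$-power map forces a shared non-trivial element of the two $\Sigma$'s.

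For sufficiency, with $p \ge 5$ I would partition six distinct slopes in $\F_p^2$ into two triples, for example $\{0, 1, \infty\}$ and $\{m, m', (m + m')/2\}$ for suitable $m, m' \in \F_p$, and lift these to generating pairs $\{x_1, y_1\}, \{x_2, y_2\}$ of $G$. Using $|G^{p^{e-1}}| \ge p^2$, I would arrange additionally that $a_1^{p^{e-1}}$ and $a_2^{p^{e-1}}$ generate distinct subgroups of order $p$ in $G^{p^{e-1}}$ (viewed as an elementary abelian $p$-group, since $G$ has exponent $p^e$). Any purported $z \in \Sigma(x_1, y_1) \cap \Sigma(x_2, y_2) \setminus \{1\}$ projects to two distinct lines of $G/\Phi(G)$, hence lies in $\Phi(G)$; an iterative descent through the $p$-power filtration, controlled by (i) or (ii), shows that $z$ ultimately lies in $\langle a_1^{p^{e-1}}\rangle \cap \langle a_2^{p^{e-1}}\rangle$, which is trivial by construction. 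The main obstacle is precisely this final verification: in the non-abelian case, translating slope-level disjointness into genuine disjointness of the $\Sigma$'s requires the $(p^{e-1})$-power map to behave linearly enough to reduce the question to linear algebra in $G^{p^{e-1}}$, and it is for this that the structural hypotheses (i) and (ii) are essential---they are treated separately because for potent $p$-groups the needed linearity flows from general structural results about $\Omega_k$ and $G^{p^k}$, whereas (i) is a direct rearrangement of the power map.
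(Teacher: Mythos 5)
Your overall strategy --- reduce to lines in $G/\Phi(G)$ and use the $p^{e-1}$-power map to pass between $G$ and the quotient --- is the right one, and your argument for the case $|G^{p^{e-1}}|=p$ is essentially the paper's. But there are concrete gaps. First, for sufficiency you only establish that \emph{each generating pair} contains an element of order $p^e$, whereas the argument needs that \emph{every} element outside $\Phi(G)$ has order $p^e$: a non-trivial $z\in\langle u^g\rangle\cap\langle w^h\rangle$ forces the socles $\langle (u^g)^{p^{e-1}}\rangle$ and $\langle (w^h)^{p^{e-1}}\rangle$ to coincide only if you already know both cyclic groups have order $p^e$, and here $u,w$ range over all six elements of the structure, not just one distinguished $a_i$ per triple. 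The missing lemma is that $|G^{p^{e-1}}|\ge p^2$ forces $\Omega_{e-1}(G)\le\Phi(G)$; the paper proves this by comparing $|G:G'\Omega_{e-1}(G)|$ with $|G:\Phi(G)|=p^2$ and using $|\{g^{p^{e-1}}\mid g\in G\}|=|G:\Omega_{e-1}(G)|$. Once you have it, no ``iterative descent through the $p$-power filtration'' is needed: a single application of (i) turns $(u^g)^{p^{e-1}}=(w^h)^{ip^{e-1}}$ into $u^g(w^h)^{-i}\in\Omega_{e-1}(G)\le\Phi(G)$, contradicting distinctness of the lines. Relatedly, your remark that in the potent case the needed linearity ``flows from general structural results'' is too optimistic: the paper exhibits a potent $p$-group of exponent $p^e$ that is \emph{not} semi-$p^{e-1}$-abelian, so the implication $x^{p^{e-1}}=y^{p^{e-1}}\Rightarrow(xy^{-1})^{p^{e-1}}=1$ genuinely fails there, and a separate argument is required (the paper passes to $G/M^{p^{e-1}}$ for a maximal subgroup $M$ containing $x$ and derives $G^{p^{e-1}}\le M^{p^{e-1}}$, contradicting $|G^{p^{e-1}}:M^{p^{e-1}}|=p$).

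Second, your necessity argument for $p\in\{2,3\}$ does not close. A shared line modulo $\Phi(G)$ gives $u=w^j t$ with $t\in\Phi(G)=G'G^p$, and hypothesis (i) converts this into a shared non-trivial power only if $t^{p^{e-1}}=1$; that is guaranteed for $t\in G^p$ (every $p$th power has order at most $p^{e-1}$) but not for $t\in G'$, so the ``structural control on the power map'' does not apply to coincidences modulo $\Phi(G)$. The paper instead shows that a Beauville structure of $G$ would descend to $G/G^p$ --- this works precisely because $G^p\le\Omega_{e-1}(G)$ --- and then invokes the fact that a $2$-generator group of exponent $2$ or $3$ has order $4$ or at most $3^3$ and is not Beauville. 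Note also that for $p=3$ the quotient $G/G^3$ may be non-abelian, so a count of lines in $\F_3^{\,2}$ alone cannot finish; one genuinely needs the non-Beauville-ness of the $2$-generator groups of exponent $3$ and order at most $27$ as an external input.
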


Recall that a finite $p$-group is potent if either $p>2$ and $\gamma_{p-1}(G)\le G^p$, or $p=2$ and $G'\le G^4$.
They encompass the well-known powerful $p$-groups.
Actually, Theorem A applies to all usual families of $p$-groups with a `nice power structure'.
Observe that (i) is known to hold for regular $p$-groups and for $p$-central $p$-groups.
More generally, we prove that (i) holds for the $p$-groups that we call generalised $p$-central.
These are defined by conditions which are dual to those for potent $p$-groups, namely that $\Omega_1(G)\le Z_{p-2}(G)$ if $p$ is odd and that $\Omega_2(G)\le Z(G)$ if $p=2$.

Since $p$-groups of class less than $p$ are regular, they are also covered by Theorem A.
Thus we can use it to determine in a rather straightforward way whether any given group of order at most $p^{\hspace{0.5pt}p}$ is a Beauville group.
In particular, we determine all Beauville $p$-groups of order $p^5$, for $p\ge 5$, and of order $p^6$, for $p\ge 7$, completing the work in \cite{BBF}.
Theorem A can be similarly applied to obtain all Beauville groups of order $p^7$ for $p\ge 7$, but we have not pursued this task so far.
On the other hand,  since $p$-groups in which $G'$ is cyclic are regular for odd $p$, we can extend the characterization of split metacyclic $p$-groups given in \cite{SV} to all metacyclic $p$-groups.

Unlike in the abelian case, under conditions (i) or (ii) in Theorem A, it may well happen that $|G^{p^{e-1}}|>p^2$; for example, if $G$ is of exponent $p$.
On the other hand, we want to remark that Theorem A is not valid for all finite $p$-groups.
As a matter of fact, we will show that, for each of the implications in Theorem A, there exist infinitely many $p$-groups for which that implication fails.
Even more, we will see that no condition on the order of $G^{p^{e-1}}$ can ensure the existence of Beauville structures in the class of all finite $p$-groups.

\vspace{5pt}

The second main result in our paper deals with the Nottingham group $\No$ over the field $\F_p$, for odd $p$.
This is the group of normalised automorphisms of the ring $\F_p[[t]]$ of formal power series; every $f\in\No$ is completely determined by its image on $t$, which is of the form $f(t)=t+\sum_{i\ge 2} \, a_it^i$.
The group $\No$ is a pro-$p$ group that can be topologically generated by two elements, and so every finite quotient of $\No$ is a candidate to be a Beauville group.
The lattice of normal subgroups of $\No$ is well known.
As a matter of fact, $\No$ is just infinite as an abstract group, i.e.\ every non-trivial normal subgroup of $\No$ is of finite index.
In Section \ref{nott}, for every $1\ne \WW \trianglelefteq \No$ we are able to determine whether the factor group $\No/\WW$ is a Beauville group or not.
It turns out that most quotients of $\No$, but not all, are Beauville groups, and we get the following result.

\begin{thmB}
Let $\No$ be the Nottingham group over $\F_p$, where $p$ is an odd prime, and let
$n_0=2$ or $5$, according as $p>3$ or $p=3$.
Then for every $n\ge n_0$ there exists a quotient of $\No$ of order $p^n$ which is a Beauville group.
\end{thmB}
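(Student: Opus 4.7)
The strategy is to appeal to the classification carried out in Section \ref{nott}, which decides, for every quotient $\No/\WW$ with $1\ne\WW\trianglelefteq\No$, whether or not it is a Beauville group. Once that classification is in hand, Theorem B only requires us to exhibit, for each $n\ge n_0$, at least one normal subgroup $\WW\trianglelefteq\No$ of index $p^n$ whose quotient passes the test. A natural pool of candidates is given by the dimension subgroups $\No_i=\{f\in\No\colon f(t)\equiv t\pmod{t^{i+1}}\}$, which are normal in $\No$ with $|\No/\No_i|=p^{i-1}$; in particular $\No/\No_{n+1}$ has order $p^n$ for every $n\ge 1$, so this family already covers all the orders we need.

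For $p\ge 5$, the base case $n=2$ is immediate: $\No/\No_3$ is a $2$-generator group of order $p^2$, hence isomorphic to $C_p\times C_p$, which is a Beauville group by Catanese's theorem. For $n\ge 3$, the plan is to feed $\No/\No_{n+1}$ (or, if necessary, a suitable neighbouring normal quotient of the same order) into the criterion from Section \ref{nott} and verify the Beauville condition by exhibiting two generating pairs constructed from the natural elements of $\No$, such as $t(1-t)^{-1}$ together with a conjugate or twist of it. Because $p\ge 5$ and the nilpotency class of $\No/\No_{n+1}$ is tightly controlled, the verification ultimately reduces to an explicit computation in the associated graded Lie algebra of $\No$.

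The harder case is $p=3$, which is precisely why the theorem requires $n\ge 5$: for instance, $\No/\No_3\cong C_3\times C_3$ already fails to be a Beauville group, and further small quotients likewise fall outside the criterion. My plan is first to handle the threshold case $n=5$ by producing an explicit Beauville structure on some concrete quotient of order $3^5$, thereby furnishing the promised ``first explicit infinite family of Beauville $3$-groups'' once it is combined with the higher cases. For $n\ge 6$ the desired Beauville quotient is then obtained by choosing $\WW$ among the available normal subgroups of index $3^n$: when $\No/\No_{n+1}$ itself satisfies the criterion, one uses it directly; otherwise, one shifts to a neighbouring normal subgroup, exploiting the fact that the lattice of normal subgroups of $\No$ is rich enough at every level to supply such an alternative.

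The main obstacle will be verifying that the two $\Sigma$-sets really do intersect trivially in the $p=3$ case: the convenient power-map shortcuts that simplify the $p\ge 5$ situation (and underlie Theorem A) are no longer available, so the argument will rely on the detailed calculations supporting the classification in Section \ref{nott}, combined with a careful bookkeeping of the depths of the various products $xy$ as $x$ and $y$ range over the proposed generating pairs.
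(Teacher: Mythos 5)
Your overall strategy coincides with the paper's: classify the quotients $\No/\No_k$, observe that they realise most orders $p^n$, and fill in the remaining orders with other normal subgroups. But the proposal has a genuine gap at exactly the point where the proof has content. The quotient $\No/\No_k$ has order $p^{k-1}$, and the classification (Theorems \ref{nott p>3} and \ref{nott p=3}) shows it is Beauville precisely when $k\ge 3$ (resp.\ $k\ge 6$ for $p=3$) and $k\ne z_m=p^m+\cdots+p+2$; so this family misses exactly the orders $p^n$ with $n=p^m+\cdots+p+1$. Your fallback for these cases is to ``shift to a neighbouring normal subgroup, exploiting the fact that the lattice of normal subgroups of $\No$ is rich enough at every level to supply such an alternative.'' That fact is false in general: by Klopsch's theorem every non-trivial normal subgroup of $\No$ is either some $\No_k$ or an intermediate subgroup of a diamond $\No_{kp+1}/\No_{kp+3}$, and diamonds occur only at levels congruent to $1$ modulo $p$, so at a generic level the only normal subgroup of the given index is $\No_k$ itself and there is no neighbour to shift to. The argument works only because the bad values satisfy $z_m-1\equiv 1\pmod p$, so each missing order sits precisely at a distinguished diamond $\No_{z_m-1}/\No_{z_m+1}$, and because Theorem \ref{thm:quotient by intermediate} shows that $p-1$ of the $p+1$ intermediate subgroups there --- namely $\WW_\alpha=\langle e_\alpha,\No_{z_m+1}\rangle$ with $\alpha\ne 0$, but \emph{not} $\No_{z_m}$ or $\WW_0$ --- give Beauville quotients. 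Neither of these two facts is identified, let alone verified, in your proposal.

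A secondary concern: you describe the verification for $p\ge 5$ as ``an explicit computation in the associated graded Lie algebra of $\No$.'' The decisive computation is the behaviour of the $p$th power map from one distinguished diamond to the next (Corollary \ref{p^mth_powers}), which shows that the subgroups $\langle x^{p^m}\rangle$, for generators $x$ in the various maximal subgroups $\MM_\lambda$ with $\lambda\ne 1$, are pairwise distinct central subgroups of order $p$. This is a genuine power-series computation (via York's matrix formula), invisible in the graded Lie algebra where all relevant $p$th powers vanish; without it one can neither separate the cyclic subgroups generated by the $xy$-type elements of order $p^{m+1}$ nor prove the negative result for $\No/\No_{z_m}$ that forces the detour through the diamonds in the first place.
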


The $3$-groups in Theorem B constitute the first explicit example which is known of an infinite family of Beauville $3$-groups.
We observe that the existence of infinitely many Beauville $3$-groups follows from either Theorem 2 in \cite{SV} or Theorem 37 in \cite{GJ}, but these results do not provide explicit groups.

\vspace{10pt}

\noindent
\textit{Notation.\/}
If $G$ is a finite $p$-group and $i\ge 0$, we write $\Omega_{\{i\}}(G)$ for the set of all elements of $G$ of order at most $p^i$, and
$\Omega_i(G)$ for the subgroup they generate.
Also, $G^{p^i}$ is the subgroup generated by all powers $g^{p^i}$, as $g$ runs over $G$.
The exponent of $G$, denoted by $\exp G$, is the maximum of the orders of all elements of $G$.
If $g\in G$ is an element of order $p$ and $\lambda\in\F_p$, then $g^{\lambda}$ is understood to be $g^n$, where $n$ is any integer which reduces to $\lambda$ modulo $p$.
The rest of the notation is standard in group theory.

\section{Finite $p$-groups with a nice power structure}

Our goal in this section is to prove Theorem A in the introduction, which generalises Catanese's criterion for abelian Beauville groups to all classes of finite
$p$-groups with a `nice power structure' that have been considered in the literature.
Then we give a number of applications of this result, like the determination of all Beauville groups of order $p^5$ for $p\ge 5$ or of order $p^6$ for $p\ge 7$, the characterisation of Beauville metacyclic $p$-groups, and also that of Beauville $p$-groups of maximal class of order at most $p^{\hspace{0.4pt}p}$.

We begin by introducing a property which is essential to our result, and by studying which of the best known families of finite $p$-groups satisfy this property.
Let $G$ be a finite $p$-group, and let $i\ge 1$ be an integer.
Following Xu \cite{xu}, we say that $G$ is \emph{semi-$p^i$-abelian\/} if the following condition holds:
\begin{equation}
\label{semi-pi-abelian}
x^{p^i} = y^{p^i}
\quad
\text{if and only if}
\quad
(xy^{-1})^{p^i} = 1.
\end{equation}
If $G$ is semi-$p^i$-abelian, then we have \cite[Lemma 1]{xu}:
\begin{enumerate}
\item[(SA1)]
$\Omega_i(G)=\{x\in G\mid x^{p^i}=1\}$.
\item[(SA2)]
The cardinality of the set $\{x^{p^i}\mid x\in G\}$ coincides with the index $|G:\Omega_i(G)|$.
\end{enumerate}
If $G$ is semi-$p^i$-abelian for every $i\ge 1$, then $G$ is called \emph{strongly semi-$p$-abelian\/}.

For example, regular $p$-groups are strongly semi-$p$-abelian \cite[Theorem 3.14]{suz2}.
On the other hand, a powerful $p$-group of exponent $p^e$ is semi-$p^{e-1}$-abelian, as follows from Lemma 3 of \cite{fer}.
Also, Xu proved that, for odd $p$, any finite $p$-group satisfying $\Omega_1(\gamma_{p-1}(G))\le Z(G)$ is strongly semi-$p$-abelian.
This applies in particular to the case $\Omega_1(G)\le Z(G)$, i.e.\ to $p$-central $p$-groups.
Next we consider two other families of $p$-groups, which are generalisations of $p$-central $p$-groups and powerful $p$-groups, respectively, and we study their behaviour with respect to property (\ref{semi-pi-abelian}).

\begin{dfn}
Let $G$ be a finite $p$-group.
Then:
\begin{enumerate}
\item
$G$ is \emph{generalised $p$-central\/} if $\Omega_1(G)\le Z_{p-2}(G)$ for odd $p$, and if
$\Omega_2(G)\le Z(G)$ if $p=2$.
\item
$G$ is \emph{potent\/} if $\gamma_{p-1}(G)\le G^p$ for odd $p$, and if $G'\le G^4$ for $p=2$. 
\end{enumerate}
\end{dfn}

Observe the duality between the two cases.
Clearly, when $p=2$ or $3$ these concepts coincide with $p$-central $p$-groups and with powerful $p$-groups.
Generalised $p$-central $p$-groups have been studied by Gonz\'alez-S\'anchez and Weigel \cite{GW}, and potent $p$-groups by Arganbright \cite{arg}, Lubotzky and Mann \cite{LM}, and Gonz\'alez-S\'anchez and Jaikin-Zapirain \cite{GJ2}.
If $G$ is generalised $p$-central then, by \cite[Theorem B]{GW}, we have
\[
\Omega_i(G) = \{ g\in G \mid g^{p^i}=1 \},
\quad
\text{for all $i\ge 1$,}
\]
and if $G$ is potent then, by \cite[Theorem 2]{arg} and \cite[Proposition 1.7]{LM}, we have
\[
G^{p^i} = \{ g^{p^i} \mid g\in G \},
\quad
\text{for all $i\ge 1$.}
\]

\begin{thm}
A generalised $p$-central $p$-group is strongly semi-$p$-abelian.
\end{thm}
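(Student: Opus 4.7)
The plan is to reduce, for odd $p$, to Xu's criterion stated just above---that $\Omega_1(\gamma_{p-1}(G))\le Z(G)$ implies strongly semi-$p$-abelian---and to handle $p=2$ directly from the stronger hypothesis $\Omega_2(G)\le Z(G)$.

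\textit{Odd $p$.} I would verify that the generalised $p$-central hypothesis $\Omega_1(G)\le Z_{p-2}(G)$ already forces $\Omega_1(\gamma_{p-1}(G))\le Z(G)$, after which Xu's theorem applies. Take $x\in\Omega_1(\gamma_{p-1}(G))$; then $x^p=1$ and $x\in\gamma_{p-1}(G)$, and the hypothesis gives $x\in Z_{p-2}(G)$ as well. A simultaneous induction on $k$ yields the telescoping containment
\[
[x,\underbrace{g,\ldots,g}_{k}]\in\gamma_{p-1+k}(G)\cap Z_{p-2-k}(G)\qquad(g\in G),
\]
and at $k=p-2$ this forces $[x,g,\ldots,g]=1$ because $Z_0(G)=1$. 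To upgrade this Engel-type vanishing to $[x,g]=1$, I would work inside the normal closure $H=\langle x\rangle^G$. Since $H\le Z_{p-2}(G)$, $H$ has nilpotency class at most $p-2<p$, hence is a regular $p$-group by P.\ Hall's theorem; since $H\le\Omega_1(G)$, Theorem~B of \cite{GW} shows $H$ has exponent $p$. In this regular setting, combining the iterated commutator vanishing with $x^p=1$ and a Hall--Petrescu collection should produce $[x,g]=1$, so that $x\in Z(G)$.

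\textit{The case $p=2$.} Here $\Omega_2(G)\le Z(G)$ is very restrictive. By Theorem~B of \cite{GW}, $(xy^{-1})^{2^i}=1$ is equivalent to $xy^{-1}\in\Omega_i(G)$, so strongly semi-$2$-abelian reduces to showing that $g\mapsto g^{2^i}$ is constant on right cosets of $\Omega_i(G)$. Writing $x=zy$ with $z\in\Omega_i(G)$ and expanding $(zy)^{2^i}$ via the Hall--Petrescu formula, each correction term is a power of a commutator involving $z$; these land in $\Omega_2(G)\le Z(G)$ after a controlled number of collections, which together with $z^{2^i}=1$ forces $(zy)^{2^i}=y^{2^i}$.

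The main obstacle is the Engel-to-centrality move in the odd-$p$ case: producing $[x,g]=1$ from the iterated vanishing $[x,g,\ldots,g]=1$ (with $p-2$ entries $g$), the order relation $x^p=1$, and the regularity of $\langle x\rangle^G$. This is where the lower-central degree of $x$, the upper-central depth of $\Omega_1(G)$, and the precise exponent information coming from Theorem~B of \cite{GW} must all act in concert, and I expect it to be the most delicate part of the argument.
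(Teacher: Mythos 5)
Your strategy for odd $p$ --- reducing to Xu's criterion by proving that $\Omega_1(G)\le Z_{p-2}(G)$ forces $\Omega_1(\gamma_{p-1}(G))\le Z(G)$ --- cannot be completed, because that implication is false. Here is a counterexample for $p=5$. Let $A=C_{25}\times C_{25}\times C_{25}$, written additively with basis $e_1,e_2,e_3$, let $N$ be the endomorphism $e_1\mapsto e_2$, $e_2\mapsto e_3$, $e_3\mapsto 5e_1$ (so $N^3=5\cdot\mathrm{id}$ and $N^6=0$), and let $G=\langle t\rangle\ltimes A$, where $t$ has order $25$ and acts as $T=1+N$ (one checks $T^{25}=1$ and $T^5=1+5N+15N^2\ne 1$). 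Then $G=\langle t,e_1\rangle$ has order $5^8$, class $6$, and $\gamma_i(G)=N^{i-1}A$ for $i\ge 2$; in particular $\gamma_4(G)=5A$ is elementary abelian of order $5^3$. Computing fifth powers, $(t^{5j}a)^5=5a$ and every $t^ka$ with $5\nmid k$ has order a multiple of $25$, so the elements of order dividing $5$ are exactly the $t^{5j}a$ with $a\in 5A$, whence $\Omega_1(G)=\langle t^5\rangle\times 5A$; a direct check gives $Z_1(G)=\langle 5e_3\rangle$, $Z_2(G)=\langle 5e_2,5e_3\rangle$ and $\Omega_1(G)\le Z_3(G)$. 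Thus $G$ is generalised $5$-central. Nevertheless $\Omega_1(\gamma_4(G))=5A$ is not central, since $[5e_1,t]=5e_2\ne 0$. So the ``Engel-to-centrality'' move that you flag as the main obstacle is not merely delicate: no such move exists, and the whole odd-$p$ reduction collapses. (This is consistent with the theorem itself, since Xu's condition is sufficient but not necessary for being strongly semi-$p$-abelian.)

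The paper takes a different and genuinely necessary route: it proves the case $i=1$ by induction on $|G|$ after reducing to $G=\langle x,y\rangle$, shows that in the critical situation $G'\le\Omega_1(G)\le Z_{p-2}(G)$ and hence $\gamma_p(G)=1$, so that $G$ is regular (abelian when $p=2$) and the $p$th power map behaves as required; the case $i>1$ then follows by induction because $G/\Omega_1(G)$ is again generalised $p$-central. Your $p=2$ sketch also has two weaknesses: constancy of $g\mapsto g^{2^i}$ on cosets of $\Omega_i(G)$ only yields the implication $(xy^{-1})^{2^i}=1\Rightarrow x^{2^i}=y^{2^i}$ and not its converse, and the claim that the Hall--Petrescu correction terms land in $\Omega_2(G)$ is unsubstantiated ($z$ lies in $\Omega_i(G)$, not in $\Omega_2(G)$). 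You should abandon the reduction to Xu's criterion and instead imitate the two-generator reduction.
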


\begin{proof}
Given $x,y\in G$, we prove that $x^{p^i}=y^{p^i}$ if and only if $(xy^{-1})^{p^i}=1$ by induction on $i\ge 1$.
We first deal with the case $i=1$, by induction on the order of $G$.
Thus the result holds in any proper subgroup of $G$, and we may assume that $G=\langle x,y \rangle$.

Let us see that $x^p=y^p$ implies $(xy^{-1})^p=1$.
We have $(x^y)^p=(x^p)^y=x^p$.
Since $\langle x,x^y \rangle<G$ unless $G$ is cyclic, it follows that $[x,y]^p=(x^{-1}x^y)^p=1$.
Consequently $G'\le \Omega_1(G)$.
If $p$ is odd, then $G'\le Z_{p-2}(G)$, and $\gamma_p(G)=[G',G,\overset{p-2}{\ldots},G]=1$.
Thus $G$ is regular and $(xy^{-1})^p=1$, as desired.
If $p=2$ then, since $x^2=y^2$, we have $G/G'=C/G'\times D/G'$ with $C/G'$ and $D/G'$ cyclic, and $D/G'\cong C_2$.
Then $D\le \Omega_2(G)\le Z(G)$ and $G/Z(G)$ is cyclic.
Thus $G$ is abelian and the result is valid also in this case.

Now we prove that $(xy^{-1})^p=1$ implies $x^p=y^p$.
We have $(xy^{-1})^p=1=(y^{-1}x)^p$ and, by the previous paragraph, $[x,y^{-1}]^p=1$.
Consequently again $G'\le\Omega_1(G)$ and then, as above, $G$ is regular (i.e.\ abelian if $p=2$) and the result holds.

Finally, we consider the case $i>1$.
As shown above, we have $x^{p^i}=y^{p^i}$ if and only if $(x^{p^{i-1}}y^{-p^{i-1}})^p=1$, which is in turn equivalent to
$x^{p^{i-1}} \Omega_1(G) = y^{p^{i-1}} \Omega_1(G)$.
Now, $G/\Omega_1(G)$ is again a generalised $p$-central $p$-group, by  \cite[Theorem B]{GW}.
By the induction hypothesis, the last equality is tantamount to $(xy^{-1})^{p^{i-1}}\in \Omega_1(G)$, and this means exactly that $(xy^{-1})^{p^i}=1$.
\end{proof}

However, potent $p$-groups are not in general strongly semi-$p$-abelian, even not semi-$p^{e-1}$-abelian as powerful $p$-groups, given that $\exp G=p^e$.

\begin{exa}
Let $p>3$ be a prime and let
\[
A
=
\langle a_1\rangle \times \dots\times\langle a_p\rangle
\cong
C_p\times\overset{p-2}{\dots}\times C_p\times C_{p^2}\times C_{p^2}.
\]
We define an automorphism $\alpha$ of $A$ by means of $\alpha(a_i)=a_ia_{i+1}$, for $i=1,\ldots,p-3$, $\alpha(a_{p-2})=a_{p-2}a_{p-1}^p$,
$\alpha(a_{p-1})=a_{p-1}a_p$, and $\alpha(a_p)=a_p$.
Then $\alpha$ is of order $p^2$, and we can construct a semidirect product $G=\langle b\rangle \ltimes A$, where $b$ is of order $p^2$ and acts on $A$ via
$\alpha$.
One readily checks that $\gamma_{p-1}(G)=\langle a_{p-1}^p, a_p^p \rangle \leq G^p$, and hence $G$ is a potent $p$-group.
Also, we have $\exp G=p^2$.
However, $G$ is not semi-$p$-abelian, since $x=ba_1$ and $y=ba_p$ satisfy that $x^p=b^pa_p^p=y^p$, but $(xy^{-1})^p=a_p^{-p}\ne 1$.
\end{exa}

After this preliminary analysis of the finite $p$-groups satisfying condition (\ref{semi-pi-abelian}), we proceed to the proof of Theorem A.
We start with a result of a more general nature that can be used to prove the non-existence of Beauville structures.

\begin{pro}
\label{negative}
Let $G$ be a $2$-generator finite $p$-group of exponent $p^e$, and suppose that:
\begin{enumerate}
\item
$\Omega_{\{e-1\}}(G)$ is contained in the union of two maximal subgroups of $G$.
\item
$|G^{p^{e-1}}|=p$.
\end{enumerate}
Then $G$ is not a Beauville group.
\end{pro}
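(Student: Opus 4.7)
The approach is to argue by contradiction: any putative Beauville structure will be shown to produce a non-trivial element common to the two $\Sigma$-sets. The plan splits naturally into a combinatorial claim about generating pairs, where hypothesis (i) enters, and a short power-counting argument, where hypothesis (ii) enters.

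First, I would prove the key preliminary claim that, under (i), every generating pair $\{x,y\}$ of $G$ has at least one of $x$, $y$, $xy$ of order exactly $p^e$. Suppose otherwise; then $x$, $y$, $xy$ all belong to $\Omega_{\{e-1\}}(G)$ and hence to $M_1 \cup M_2$, where $M_1,M_2$ are the two maximal subgroups supplied by (i). Since $\langle x, y\rangle = G$, neither $x$ nor $y$ can lie in $M_1 \cap M_2$, nor can $x$ and $y$ both lie in the same $M_i$, so after relabelling we may assume $x \in M_1 \setminus M_2$ and $y \in M_2 \setminus M_1$. But then $xy \in M_1$ would force $y = x^{-1}(xy) \in M_1$, and $xy \in M_2$ would force $x = (xy)y^{-1} \in M_2$, contradicting in either case that $xy \in M_1 \cup M_2$.

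Assume now that $\{x_1,y_1\}, \{x_2,y_2\}$ is a Beauville structure for $G$. By the claim, for $i=1,2$ there exists $u_i \in \{x_i, y_i, x_i y_i\}$ of order $p^e$, so that $u_i^{p^{e-1}}$ is a non-trivial element of $G^{p^{e-1}}$. By (ii), $|G^{p^{e-1}}| = p$, so both $u_1^{p^{e-1}}$ and $u_2^{p^{e-1}}$ generate the same cyclic subgroup of order $p$; in particular, $u_1^{p^{e-1}} = u_2^{\lambda p^{e-1}}$ for some $\lambda \in \F_p^*$, placing $u_1^{p^{e-1}}$ in $\langle u_2 \rangle \subseteq \Sigma(x_2,y_2)$. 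Since also $u_1^{p^{e-1}} \in \langle u_1 \rangle \subseteq \Sigma(x_1,y_1)$, this yields a non-trivial element of $\Sigma(x_1,y_1) \cap \Sigma(x_2,y_2)$, contradicting the Beauville condition.

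I expect the only delicate point to be the combinatorial claim in the first step: one must rule out any generating pair whose three ``seeds'' $x$, $y$, $xy$ are simultaneously trapped in the union of two maximal subgroups. Once that is secured, the power-counting conclusion is immediate from the fact that any two non-trivial elements of a group of order $p$ generate the same cyclic subgroup.
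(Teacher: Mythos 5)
Your proof is correct and follows essentially the same route as the paper's: the combinatorial observation that $x$, $y$, $xy$ must lie in three distinct maximal subgroups (so one of them escapes $\Omega_{\{e-1\}}(G)$ and has order $p^e$), followed by the observation that the resulting $p^{e-1}$st powers all generate the unique subgroup $G^{p^{e-1}}$ of order $p$. The paper merely compresses the first step into one sentence; your version spells it out, but the argument is the same.
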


\begin{proof}
We argue by way of contradiction.
Suppose $\{x_1,y_1\}$ and $\{x_2,y_2\}$ are two systems of generators of $G$ such that $\Sigma(x_1,y_1)\cap \Sigma(x_2,y_2)=1$.
Since no two of the elements $x_1$, $y_1$ and $x_1y_1$ can lie in the same maximal subgroup of $G$, it follows from (i) that one of these elements, say $x_1$, is of order $p^e$.
Similarly, we may assume that the order of $x_2$ is also $p^e$.
Since $G^{p^{e-1}}$ is of order $p$, we conclude that $\langle x_1^{p^{e-1}} \rangle=\langle x_2^{p^{e-1}} \rangle$, which is a contradiction.
\end{proof}

As we will see at the end of the paper, we cannot relax condition (i) in the last proposition, since there are examples of groups $G$ in which
$\Omega_{\{e-1\}}(G)$ is contained in the union of three maximal subgroups, and which are Beauville groups even if $G^{p^{e-1}}$ is of order $p$.

The following result is an extended version of Theorem A.

\begin{thm}
\label{characterisation general}
Let $G$ be a $2$-generator finite $p$-group of exponent $p^e$ which satisfies one of the following conditions:
\begin{enumerate}
\item
$G$ is semi-$p^{e-1}$-abelian, i.e.\ for every $x,y\in G$, we have $x^{p^{e-1}}=y^{p^{e-1}}$ if and only if $(xy^{-1})^{p^{e-1}}=1$.
\item
$G$ is potent.
\end{enumerate}
Then $G$ is a Beauville group if and only if $p\ge 5$ and $|G^{p^{e-1}}|\ge p^2$.
If that is the case, then every lift of a Beauville structure of $G/\Phi(G)$ is a Beauville structure of $G$.
\end{thm}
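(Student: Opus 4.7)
The plan is to split the argument in two: the necessity of $|G^{p^{e-1}}|\ge p^2$ will come from Proposition~\ref{negative}, while the sufficiency together with the lifting claim will follow from a lift-from-$G/\Phi(G)$ argument that uses Catanese's theorem \cite{cat} as a black box. Conditions (i) and (ii) intervene in only two places: to guarantee that $\Omega_{\{e-1\}}(G)$ is a subgroup, and to control the $p^{e-1}$-th power map inside the lifting step.

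For the necessity of $|G^{p^{e-1}}|\ge p^2$, since $\exp G=p^e$ forces $|G^{p^{e-1}}|\ge p$, I may assume $|G^{p^{e-1}}|=p$ and derive a contradiction via Proposition~\ref{negative}. Under (i), (SA1) gives $\Omega_{\{e-1\}}(G)=\Omega_{e-1}(G)$ as a subgroup; under (ii), the analogous fact for potent $p$-groups (from \cite{arg,LM,GJ2}) does the same. Since $\exp G=p^e$ this subgroup is proper and hence contained in some maximal subgroup of $G$, which verifies hypothesis (i) of Proposition~\ref{negative}; that proposition then contradicts the Beauville-ness of $G$.

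For sufficiency and the lifting claim, assume $p\ge 5$ and $|G^{p^{e-1}}|\ge p^2$. By Catanese's theorem, $G/\Phi(G)\cong C_p\times C_p$ is a Beauville group; fix any Beauville structure $\{\bar x_1,\bar y_1\},\{\bar x_2,\bar y_2\}$ there and let $\{x_1,y_1\},\{x_2,y_2\}$ be any lift to $G$. Suppose for contradiction that $1\ne u\in\Sigma(x_1,y_1)\cap\Sigma(x_2,y_2)$, so $u=(w_1^{k_1})^{a_1}=(w_2^{k_2})^{a_2}$ with $w_i\in\{x_i,y_i,x_iy_i\}$. Projecting to $G/\Phi(G)$ and using that a Beauville structure in $C_p\times C_p$ consists of six pairwise distinct lines, we get $\bar u=\bar 1$, hence $u\in\Phi(G)$, $p\mid k_i$, and $u\in\langle w_i^p\rangle^{a_i}$. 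Moreover, under (i) or (ii), not all three of $x_i,y_i,x_iy_i$ can lie in the proper subgroup $\Omega_{\{e-1\}}(G)=\Omega_{e-1}(G)$, so at least one of them has order $p^e$. The crux of the argument is then to analyse the unique order-$p$ element $v=u^{|u|/p}$ of $\langle u\rangle$, which lies in both $\langle w_1\rangle^{a_1}$ and $\langle w_2\rangle^{a_2}$: invoking centrality of $G^{p^{e-1}}$ under (ii) (as in powerful $p$-groups), or the (SA2) injection $G/\Omega_{e-1}(G)\hookrightarrow G$ under (i), one squeezes $G^{p^{e-1}}$ into $\langle v\rangle$, contradicting $|G^{p^{e-1}}|\ge p^2$.

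Finally, the necessity of $p\ge 5$ is obtained by running the lifting argument in reverse: from the first part I already know $|G^{p^{e-1}}|\ge p^2$, and the same core analysis shows that the six images in $G/\Phi(G)$ of any Beauville structure of $G$ must be pairwise distinct lines, forcing $p+1\ge 6$. The main obstacle is precisely this core computation. Since (i) and (ii) are logically distinct---the example preceding Proposition~\ref{negative} exhibits a potent group that is not semi-$p^{e-1}$-abelian---I expect the computation to bifurcate: it should exploit centrality of $G^{p^{e-1}}$ to trivialise conjugations in the potent case, and use the (SA2) bijection to reduce the analysis to the exponent-$p$ quotient $G/\Omega_{e-1}(G)$ in the semi-$p^{e-1}$-abelian case, with the common aim of collapsing $G^{p^{e-1}}$ into $\langle v\rangle$.
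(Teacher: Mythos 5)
Your architecture matches the paper's for two of the three claims. The case $|G^{p^{e-1}}|=p$ is handled exactly as in the paper via Proposition~\ref{negative}, and your sufficiency/lifting argument in case (i) reduces, as in the paper, to the implication $x^{p^{e-1}}=y^{ip^{e-1}}\Rightarrow xy^{-i}\in\Omega_{e-1}(G)\le\Phi(G)$. (Note that you need the stronger fact $\Omega_{e-1}(G)\le\Phi(G)$, which follows from $|G^{p^{e-1}}|\ge p^2$ by comparing $|G:\Omega_{e-1}(G)|=|\{g^{p^{e-1}}\mid g\in G\}|$ with $|G:\Phi(G)|=p^2$; mere properness of $\Omega_{e-1}(G)$ only tells you that \emph{one} of $x_i,y_i,x_iy_i$ has order $p^e$, which is not enough for your analysis of $v=u^{|u|/p}$.) Your potent case is only a sketch, but the facts you invoke (centrality of $G^{p^{e-1}}$ and $G^{p^{e-1}}=\langle w_1^{p^{e-1}},w_2^{p^{e-1}}\rangle$) can be extracted from \cite{GJ2}, and once written out the argument essentially converges to the paper's, which quotients by $M^{p^{e-1}}$ for a maximal subgroup $M$ containing $x$ rather than by $\langle v\rangle$.

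The genuine gap is in your proof that $p\ge 5$ is necessary. ``Running the lifting argument in reverse'' requires the \emph{converse} implication: from $\langle\bar w_1\rangle=\langle\bar w_2\rangle$ in $G/\Phi(G)$, i.e.\ $w_1w_2^{-j}\in\Phi(G)$, you must deduce $(w_1w_2^{-j})^{p^{e-1}}=1$ and hence $\langle w_1\rangle\cap\langle w_2\rangle\ne 1$. That step needs $\Phi(G)\subseteq\Omega_{e-1}(G)$, and you only have the reverse inclusion: $\Phi(G)=G'G^p$ contains commutators whose orders are not controlled by (i) or (ii). Indeed the claim that any Beauville structure of $G$ projects to six pairwise distinct lines of $G/\Phi(G)$ is false in general: for a group of exponent $p$ (where $e=1$, $\Omega_{e-1}(G)=1$ and condition (i) is vacuous), two generators congruent modulo $\Phi(G)$ can perfectly well generate cyclic subgroups all of whose conjugates intersect trivially --- this is exactly the mechanism of Lemma~\ref{intersection_2}, which the paper exploits in Section~\ref{nott} to build Beauville structures whose two generating triples share lines modulo $\Phi(G)$. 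The paper's fix is to descend not to $G/\Phi(G)$ but to $G/G^p$: every generator $g^p$ of $G^p$ has order at most $p^{e-1}$, so $G^p\le\Omega_{e-1}(G)$ does hold, hence $xy^{-i}\in G^p$ forces $x^{p^{e-1}}=y^{ip^{e-1}}$ and the Beauville structure descends to $G/G^p$; one then concludes because $G/G^2$ has order $4$ and $G/G^3$ has order at most $3^3$, and no such group is Beauville. You need this detour (or some substitute for it) to close the $p\in\{2,3\}$ case.
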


\begin{proof}
Before proceeding, notice that in any case $\Omega_{e-1}(G)$ coincides with the set of all elements of $G$ of order at most $p^{e-1}$, and the cardinality of
\[
X = \{ g^{p^{e-1}} \mid g\in G \}
\]
coincides with the index $|G:\Omega_{e-1}(G)|$.
This follows from (SA1) and (SA2) if $G$ is semi-$p^{e-1}$-abelian, and from Theorem 6.6 of \cite{GJ2} if $G$ is potent.
Next, we claim that the condition $|G^{p^{e-1}}|\ge p^2$ implies that $\Omega_{e-1}(G)$ is contained in $\Phi(G)$.
Since $\Phi(G)=G'G^p\le G'\Omega_{e-1}(G)$, we have
\begin{equation}
\label{chain}
|G/\Omega_{e-1}(G):(G/\Omega_{e-1}(G))'|
=
|G:G'\Omega_{e-1}(G)|
\le
|G:\Phi(G)| = p^2.
\end{equation}
If $|G/\Omega_{e-1}(G):(G/\Omega_{e-1}(G))'|\le p$, then the quotient $G/\Omega_{e-1}(G)$ is cyclic, and so it has order at most $p$.
It follows that $|X|\le p$, and so also $|G^{p^{e-1}}|\le p$, which is a contradiction.
Thus $|G/\Omega_{e-1}(G):(G/\Omega_{e-1}(G))'|\ge p^2$, and this, together with (\ref{chain}), yields that $\Omega_{e-1}(G)\le \Phi(G)$.
This proves the claim and, as a consequence, all elements outside $\Phi(G)$ are of order $p^e$.
 
Now we show that $G$ is a Beauville group if $p\ge 5$ and $|G^{p^{e-1}}|\ge p^2$.
Since $p\ge 5$, the elementary abelian group $G/\Phi(G)$ is a Beauville group.
Let us see that every lift of a Beauville structure of $G/\Phi(G)$ is a Beauville structure of $G$.
If we use the bar notation in $G/\Phi(G)$, it suffices to show that, given two elements
$x,y\in G\smallsetminus \Phi(G)$, the condition $\langle \overline x \rangle \cap \langle \overline y \rangle=\overline 1$ implies that $\langle x \rangle \cap \langle y \rangle=1$.
Suppose for a contradiction that $\langle x \rangle \cap \langle y \rangle \ne 1$, i.e.\ that
$x^{p^{e-1}}=y^{ip^{e-1}}$ for some integer $i$ not divisible by $p$.

Now we deal separately with the two conditions (i) and (ii).
If (i) holds then $xy^{-i}\in \Omega_{e-1}(G)$, and consequently $\langle \overline x \rangle=\langle \overline y \rangle$ in $G/\Phi(G)$, which is a contradiction.
On the other hand, if $G$ is potent then we consider a maximal subgroup $M$ of $G$ containing $x$.
Since $\Omega_{e-1}(G)\le M$, we have
\[
|M:M^{p^{e-1}}| = |\Omega_{e-1}(M)| = |\Omega_{e-1}(G)| = |G:G^{p^{e-1}}|,
\]
by \cite[Theorem 6.6]{GJ2}.
As a consequence, $|G^{p^{e-1}}:M^{p^{e-1}}|=p$.
Now the quotient group $G/M^{p^{e-1}}$ is a potent group which can be generated by two elements of order at most $p^{e-1}$, namely the images of $x$ and $y$. 
Again by Theorem 6.6 of \cite{GJ2}, it follows that $\exp G/M^{p^{e-1}}\le p^{e-1}$ and $G^{p^{e-1}}\le M^{p^{e-1}}$.
This contradiction completes the proof of the first implication in the statement of the theorem.

Let us now prove the converse.
Since $\exp G=p^e$ and $\Omega_{\{e-1\}}(G)$ is a proper subgroup of $G$, it follows from
Proposition \ref{negative} that we only need to prove that $G$ has no Beauville structure if $p=2$ or $3$, provided that $|G^{p^{e-1}}|\ge p^2$.
As shown above, all elements of $G\smallsetminus \Phi(G)$ are of order $p^e$.
Also, if $G$ is potent then $G$ is powerful and consequently semi-$p^{e-1}$-abelian.
Consequently, it suffices to prove the result in case (i).
We are going to show that any Beauville structure of $G$ induces, by passing to the quotient, a Beauville structure in $G/G^p$.
However, if $p=2$ then $G/G^2$ is abelian of order $4$, and if $p=3$ then $G/G^3$ is of order at most $3^3$ by \cite[14.2.3]{rob}.
In both cases, $G/G^p$ does not have a Beauville structure.
 	
So let us see that every Beauville structure of $G$ is inherited by $G/G^p$.
To this end, we see that, given $x,y\in G\smallsetminus \Phi(G)$, the condition
$\langle x \rangle \cap \langle y \rangle=1$ implies that
$\langle \overline x \rangle \cap \langle \overline y \rangle=\overline 1$ in $G/G^p$.
If this is false, then for some $i$ not divisible by $p$ we have $xy^{-i}\in G^p$, and consequently
$(xy^{-i})^{p^{e-1}}=1$.
By (i), we get $x^{p^{e-1}}=y^{ip^{e-1}}$.
This is a contradiction, since both $x$ and $y$ are of order $p^e$ and $\langle x \rangle \cap \langle y \rangle=1$.
\end{proof}

\begin{cor}
\label{cor:thm A for all good families}
Let $G$ be a finite $p$-group, and suppose that $G$ belongs to one of the following families:
\begin{enumerate}
\item
Regular $p$-groups, and in particular groups of order at most $p^{\hspace{0.4pt}p}$.
\item
Potent $p$-groups, and in particular powerful $p$-groups.
\item
Generalised $p$-central $p$-groups, and in particular $p$-central $p$-groups.
\item
For odd $p$, groups satisfying the condition $\Omega_1(\gamma_{p-1}(G))\le Z(G)$.
\end{enumerate}
Then $G$ is a Beauville group if and only if $p\ge 5$ and $|G^{p^{e-1}}|\ge p^2$, where $\exp G=p^e$.
\end{cor}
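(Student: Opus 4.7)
The strategy is entirely reductive: verify that each of the four families falls under one of the two hypotheses of Theorem \ref{characterisation general} (Theorem A in its extended form), namely that $G$ is either semi-$p^{e-1}$-abelian or potent. Once this reduction is made, the desired characterization follows at once. Since Beauville groups are $2$-generator and the order of $G^{p^{e-1}}$ is unaffected by embedding/restricting to a $2$-generator quotient situation, I will implicitly assume $G$ is $2$-generator (otherwise $G$ is trivially not Beauville, and one restricts the equivalence to the $2$-generator members of each family).

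Family (ii) is literally case (ii) of Theorem \ref{characterisation general}, so only the "in particular" clause needs checking: a powerful $p$-group is potent because for odd $p$ we have $\gamma_{p-1}(G)\le G'\le G^p$ (using $p-1\ge 2$), while for $p=2$ the defining conditions coincide. For family (i), regular $p$-groups are strongly semi-$p$-abelian by Suzuki's result already cited in the paper (\cite[Theorem 3.14]{suz2}), so in particular semi-$p^{e-1}$-abelian. The sub-claim about groups of order at most $p^{\hspace{0.4pt}p}$ follows from the classical Hall-Burnside fact that a $p$-group of nilpotency class less than $p$ is regular, combined with the elementary observation that a $p$-group of order $p^n$ has class at most $n-1$, hence class less than $p$ when $n\le p$.

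For family (iii), Theorem \ref{characterisation general} applies through case (i) because Theorem 2.3 establishes that generalised $p$-central $p$-groups are strongly semi-$p$-abelian; the sub-family of $p$-central $p$-groups (where $\Omega_1(G)\le Z(G)$) is immediately generalised $p$-central since $Z(G)\le Z_{p-2}(G)$. Finally, for family (iv), the paper's preliminary discussion already quotes Xu's theorem that any finite $p$-group (for odd $p$) with $\Omega_1(\gamma_{p-1}(G))\le Z(G)$ is strongly semi-$p$-abelian, so case (i) of Theorem \ref{characterisation general} applies once more. There is essentially no obstacle in this corollary: the entire technical weight is carried by Theorem \ref{characterisation general} and Theorem 2.3. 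The only points requiring care are the routine verifications of the inclusions "powerful $\Rightarrow$ potent" and "$p$-central $\Rightarrow$ generalised $p$-central", together with invoking the classical regularity of $p$-groups of class less than $p$ for the order-bounded subclass of family (i).
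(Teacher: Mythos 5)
Your proposal is correct and follows exactly the route the paper intends: the corollary is stated without proof precisely because each family reduces to hypothesis (i) or (ii) of Theorem \ref{characterisation general} via the facts already assembled in the section (Suzuki for regular groups, the theorem on generalised $p$-central groups, Xu's result for family (iv), and potency directly), together with the routine inclusions you verify. Your explicit remark that the statement implicitly assumes $G$ is $2$-generator is a fair observation, since the theorem it rests on carries that hypothesis while the corollary's wording omits it.
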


As we see in our next result, under the hypotheses of Theorem A, the condition $|G^{p^{e-1}}|\ge p^2$ is easy to detect if we know a reasonably good presentation of $G$.

\begin{pro}
\label{easy detecting}
Let $G=\langle a,b \rangle$ be a finite $p$-group of exponent $p^e$ which is either semi-$p^{e-1}$-abelian or potent, where $p\ge 5$.
Then $|G^{p^{e-1}}|\ge p^2$ if and only if the two subgroups $\langle a^{p^{e-1}} \rangle$ and
$\langle b^{p^{e-1}} \rangle$ are different and non-trivial.
\end{pro}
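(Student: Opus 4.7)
The plan is to handle the two implications separately, with the converse being essentially elementary and the forward direction following directly from machinery already set up in the proof of Theorem \ref{characterisation general}.

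For the easy direction, suppose $\langle a^{p^{e-1}}\rangle$ and $\langle b^{p^{e-1}}\rangle$ are distinct and non-trivial. Since $\exp G=p^e$, the elements $a^{p^{e-1}}$ and $b^{p^{e-1}}$ have order dividing $p$, and being non-trivial, both cyclic subgroups have order exactly $p$. Two distinct subgroups of order $p$ intersect trivially, so the subgroup $\langle a^{p^{e-1}}, b^{p^{e-1}}\rangle$, which is contained in $G^{p^{e-1}}$, has order at least $p^2$.

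For the forward direction, I would invoke the first observation from the proof of Theorem \ref{characterisation general}: when $|G^{p^{e-1}}|\ge p^2$, one has $\Omega_{e-1}(G)\le\Phi(G)$, so every element of $G\setminus\Phi(G)$ has order exactly $p^e$. Since $\{a,b\}$ generates $G$, neither lies in $\Phi(G)$; hence both have order $p^e$ and $a^{p^{e-1}}\ne 1 \ne b^{p^{e-1}}$. It remains to rule out $\langle a^{p^{e-1}}\rangle = \langle b^{p^{e-1}}\rangle$. Suppose, for a contradiction, that $a^{p^{e-1}}=b^{ip^{e-1}}$ for some integer $i$ with $p\nmid i$. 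I would then recycle verbatim the contradiction used in the proof of Theorem \ref{characterisation general}: under hypothesis (i), semi-$p^{e-1}$-abelianness gives $(ab^{-i})^{p^{e-1}}=1$, hence $ab^{-i}\in\Omega_{e-1}(G)\le\Phi(G)$, forcing $\langle\overline{a}\rangle=\langle\overline{b}\rangle$ in $G/\Phi(G)$; under hypothesis (ii), one picks a maximal subgroup $M\ni a$, notes that $\bar a$ and $\bar b$ have order at most $p^{e-1}$ in the potent group $G/M^{p^{e-1}}$ (using $a\in M$ and $b^{ip^{e-1}}=a^{p^{e-1}}\in M^{p^{e-1}}$ together with $\gcd(i,p)=1$), and invokes \cite[Theorem 6.6]{GJ2} to conclude $G^{p^{e-1}}\le M^{p^{e-1}}$, contradicting the equality $|G^{p^{e-1}}:M^{p^{e-1}}|=p$ which follows from $\Omega_{e-1}(G)\le M$. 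Either way we contradict the fact that $\{a,b\}$ is a generating set.

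The only nontrivial point is recognising that what one actually extracts from the proof of Theorem \ref{characterisation general} is not merely the existence of \emph{some} Beauville structure, but the sharper statement that, under the numerical hypothesis $|G^{p^{e-1}}|\ge p^2$, any two elements $x,y$ of $G\setminus\Phi(G)$ which are independent modulo $\Phi(G)$ satisfy $\langle x\rangle\cap\langle y\rangle=1$. Once this is isolated, applying it to the generators $a,b$ is routine, and no new computation is required.
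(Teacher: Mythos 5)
Your proof is correct. The easy direction and the derivation of non-triviality from $\Omega_{e-1}(G)\le\Phi(G)$ coincide with what the paper does (the paper in fact omits the easy direction entirely, so your two-line argument for it is a harmless addition). Where you diverge is in ruling out $\langle a^{p^{e-1}}\rangle=\langle b^{p^{e-1}}\rangle$: the paper does \emph{not} re-run the internal contradiction argument of Theorem \ref{characterisation general}; instead it chooses a Beauville structure of $G/\Phi(G)$ whose two generating pairs begin with $\overline{a}$ and $\overline{b}$ respectively (possible since $p\ge 5$), invokes the theorem's lifting statement to get a Beauville structure $\{a,y_1\}$, $\{b,y_2\}$ of $G$, and reads off $\langle a\rangle\cap\langle b\rangle=1$ from $\Sigma(a,y_1)\cap\Sigma(b,y_2)=1$. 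You instead extract and reuse the sharper intermediate fact buried in that theorem's proof, namely that under $|G^{p^{e-1}}|\ge p^2$ any two elements of $G\smallsetminus\Phi(G)$ that are independent modulo $\Phi(G)$ generate cyclic subgroups meeting trivially, and apply it directly to $a$ and $b$. Both routes are sound; yours is more self-contained in that it bypasses the existence of a Beauville structure of $C_p\times C_p$ with prescribed elements in different generating pairs, at the cost of repeating the case analysis (semi-$p^{e-1}$-abelian versus potent) that the paper's citation hides. Your closing remark correctly identifies the point that makes the citation of Theorem \ref{characterisation general} legitimate.
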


\begin{proof}
We only need to prove the `only if' part.
Thus we assume that $|G^{p^{e-1}}|\ge p^2$.
From the proof of Theorem \ref{characterisation general}, we have $\Omega_{e-1}(G)\le \Phi(G)$, and so both $\langle a^{p^{e-1}} \rangle$ and
$\langle b^{p^{e-1}} \rangle$ are non-trivial.
Now, since $p\ge 5$, there is a Beauville structure with two systems of generators $\{a\Phi(G),y_1\Phi(G)\}$ and $\{b\Phi(G),y_2\Phi(G)\}$.
Again by Theorem \ref{characterisation general}, the lift given by $\{a,y_1\}$ and $\{b,y_2\}$ is a Beauville structure of $G$.
Thus $\langle a \rangle \cap \langle b \rangle=1$, and consequently $\langle a^{p^{e-1}} \rangle \ne \langle b^{p^{e-1}} \rangle$.
This completes the proof.
\end{proof}

\begin{exa}
In \cite[Conjecture 25]{BBF}, the authors conjecture that the two non-isomorphic groups of order $p^5$ under the names $H_3$ and $H_4$ in that paper are Beauville for $p\ge 5$ and, as a consequence, that there are exactly $p+10$ Beauville groups of order $p^5$ for $p\ge 5$.
This can be confirmed in a straightforward way by using Proposition \ref{easy detecting}.
Indeed, both isomorphism types can be described in the form
\begin{gather*}
G = \langle a, b, c \mid a^{p^2}=b^{p^2}=c^p=[b,c]=1,\ [a,b]=c,\ [a,c]=b^{rp} \rangle,
\end{gather*}
where $r$ is not divisible by $p$.
Observe that $G=\langle a,b \rangle$.
Since $\exp G=p^2$ and $\langle a^p \rangle \ne \langle b^p \rangle$ are non-trivial, we conclude that $G$ is a Beauville group.
\end{exa}

For groups of class less than $p$ (so in particular for groups of order at most $p^{\hspace{0.4pt}p}$), we can further simplify the determination of whether the group is Beauville or not by using the Lazard Correspondence.
Recall that the Lazard Correspondence uses the Baker-Campbell-Hausdorff formula to establish a one-to-one correspondence between finite $p$-groups of class less than $p$ and nilpotent Lie rings of $p$-power order of class less than $p$ (see \cite[Section 10.2]{khu}).
The underlying set for both the group and the Lie ring is the same, and it turns out that the $n$th power of an element in the group coincides with its $n$th multiple in the Lie ring.
Thus if $G=\langle a,b \rangle$, we can check the conditions in Proposition \ref{easy detecting} by working in the Lie ring instead of in the group, i.e.\ we have to check whether $\langle p^{e-1}a \rangle$ and $\langle p^{e-1}b \rangle$ are different and non-trivial.
This is particularly interesting for $p$-groups of small order, since their classification relies in classifying first nilpotent Lie rings of the same order and then applying the Lazard correspondence.
For example, this is the procedure followed in \cite{NOV} and \cite{OV} to determine all groups of orders $p^6$ and $p^7$ for $p\ge 7$.
By using the presentations of the nilpotent Lie rings of order $p^6$ provided in \cite{vau}, we have obtained that the number of Beauville groups of order $p^6$ is
\[
4p+20+4\gcd(p-1,3)+\gcd(p-1,4)
\]
for $p\ge 7$.
Since the total number of $2$-generator groups of order $p^6$ is
\[
10p+62+14\gcd(p-1,3)+7\gcd(p-1,4)+2\gcd(p-1,5),
\]
it follows that the ratio between the number of Beauville groups and the number of all $2$-generator groups of order $p^6$ tends to $2/5$ as $p\to\infty$.
Note that in \cite{BBF}, the authors could only say that this limit is smaller than $1$, by finding $p-1$ non-Beauville $2$-generator groups of order $p^6$.

As an illustration of our method, let us consider the following two nilpotent Lie rings of order $p^6$ taken from
\cite{vau}:
\[
L_1 = \langle a,b \mid p^2a, pb-[b,a,a], \text{$p$-class 3} \rangle
\]
and
\begin{multline*}
L_2 = \langle a,b \mid pa-[b,a,a,a], pb-[b,a,a,a], [b,a,a,b],
\\
[b,a,b,b]+[b,a,a,a], \text{$p$-class 4} \rangle.
\end{multline*}
Clearly, in both $L_1$ and $L_2$, the elements $a$ and $b$ are of order $p^2$, which is the exponent of the additive group of the Lie ring.
In the first case, $\langle pa \rangle$ and $\langle pb \rangle$ are different subgroups, while in the second case they are equal.
Thus $L_1$ gives rise to a Beauville group under the Lazard correspondence, while $L_2$ does not.

It would be equally possible to calculate the exact number of Beauville groups of order $p^7$, for $p\ge 7$, but we have not pursued that task.

\vspace{10pt}

As a final application of Theorem A, we extend the characterisation given by Stix and Vdovina \cite[Theorem 3]{SV} of split metacyclic Beauville $p$-groups to all metacyclic $p$-groups.
Our proof is self-contained and does not rely on the result of \cite{SV}.

\begin{cor}
A metacyclic $p$-group $G$ is a Beauville group if and only if $p\ge 5$ and $G$ is a semidirect product of two cyclic groups of the same order.
\end{cor}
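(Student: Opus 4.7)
The plan is to split the proof into the odd $p$ and $p=2$ cases, using Theorem A for the odd case and a more hands-on structural argument for the prime $2$. Since $G$ is metacyclic, its derived subgroup lies inside the cyclic normal subgroup $N$ and so $G'$ is cyclic. The standard fact (cited in the paper) that a finite $p$-group with cyclic derived subgroup is regular for odd $p$ places $G$ under the hypothesis of the Corollary to Theorem A, reducing the odd-$p$ case to determining when $|G^{p^{e-1}}|\ge p^2$, where $\exp G = p^e$.

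For this structural computation I would use the short exact sequence
\[
1 \longrightarrow G^{p^{e-1}}\cap N \longrightarrow G^{p^{e-1}} \longrightarrow G^{p^{e-1}} N / N \longrightarrow 1.
\]
For odd $p$, regularity forces $G^{p^{e-1}}$ to have exponent $p$, so $G^{p^{e-1}}\cap N$ is a cyclic group of exponent $p$ and order at most $p$; the quotient embeds in $(G/N)^{p^{e-1}}$, cyclic of order at most $p$ since $|G/N|\le\exp G = p^e$. Hence $|G^{p^{e-1}}|\le p^2$, and equality forces $|N| = |G/N| = p^e$. A lift $b\in G$ of a generator of $G/N$ then satisfies $p^e = |bN|\le|b|\le p^e$, so $|b|=p^e$, and an order count gives $\langle b\rangle\cap N = 1$. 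This identifies $G$ as $N\rtimes\langle b\rangle\cong C_{p^e}\rtimes C_{p^e}$; the converse, that such a semidirect product has $|G^{p^{e-1}}|\ge p^2$, is immediate from $a^{p^{e-1}}$ and $b^{p^{e-1}}$ being independent. Theorem A then delivers the characterisation for odd $p$, and in particular the case $p=3$ is automatically excluded.

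For $p=2$ the same sequence argument — with regularity replaced by the bare observation that a cyclic $2$-group has at most one involution — still yields $|G^{2^{e-1}}|\le 4$. The subcase $|G^{2^{e-1}}|\le 2$ I would handle via Proposition \ref{negative}, after checking that $\Omega_{\{e-1\}}(G)$ lies in the union of two maximal subgroups (in this regime elements of order $2^e$ occupy only two cosets of $\Phi(G)$). The subcase $|G^{2^{e-1}}|=4$ forces $|N| = |G/N| = 2^e$, and the lifting argument from the odd case then shows that $G$ is split, so $G\cong C_{2^e}\rtimes_{\lambda} C_{2^e}$. If $\lambda=1$, Catanese's theorem rules out a Beauville structure (since $\gcd(|G|,6)\neq 1$); if $\lambda\neq 1$, I would argue directly that $x^{2^{e-1}}$ depends only on the coset $x\Phi(G)$, taking at most two non-trivial values, both central, so every generating pair produces a $\Sigma$-set containing both values and no two such sets can be disjoint. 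The main obstacle is this last subcase, where neither Theorem A nor Proposition \ref{negative} applies and a custom power-map computation is required to force the common involutions in every $\Sigma$.
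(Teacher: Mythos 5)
Your treatment of odd $p$ is essentially the paper's: cyclic derived subgroup gives regularity, hence the semi-$p^{e-1}$-abelian hypothesis of Theorem A, and the question reduces to when $|G^{p^{e-1}}|\ge p^2$. Your exact-sequence computation is a correct (if slightly more laborious) substitute for the paper's use of Proposition \ref{easy detecting}; the only step you should spell out is why $G^{p^{e-1}}\cap N\ne 1$ forces $|N|=p^e$ (if $\exp N\le p^{e-1}$ then $N\le\Omega_{e-1}(G)$, and since $G/\Omega_{e-1}(G)$ has exponent $p$ and order $p^2$ by regularity, $G/N$ could not be cyclic). So the odd case is fine.

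The case $p=2$ is where your proposal has genuine gaps. First, the bound $|G^{2^{e-1}}|\le 4$ is not delivered by the observation that a cyclic $2$-group has one involution: the set of $2^{e-1}$st powers consists of involutions, but $G^{2^{e-1}}$ is the subgroup they \emph{generate}, and a subgroup generated by involutions can be dihedral; a priori $G^{2^{e-1}}\cap N$ could be a cyclic subgroup of $N$ of order greater than $2$, and ruling this out requires an actual computation with the power map, not the stated one-liner. Second, in the subcase $|G^{2^{e-1}}|=2$ you invoke Proposition \ref{negative}, whose hypothesis (i) demands that some entire nontrivial coset of $\Phi(G)$ consist of elements of order $2^e$; your parenthetical (``elements of order $2^e$ occupy only two cosets'') is not the right condition and is not obviously true for an arbitrary metacyclic $2$-group. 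Your final subcase ($|G^{2^{e-1}}|=4$, so $G\cong C_{2^e}\rtimes C_{2^e}$) is workable once one verifies that $x^{2^{e-1}}$ is constant on cosets of $\Phi(G)$, but again that is a nontrivial power computation you only gesture at. The paper avoids this entire case division: if $G'\le G^4$ then $G$ is powerful and Corollary \ref{cor:thm A for all good families} applies; otherwise $G'=\langle a^2\rangle$, and a short conjugacy computation shows that some power $x^*=bw^*$ of one of $x,y,xy$ has conjugacy class equal to the whole coset $bG'$, so $bG'\subseteq\Sigma(x,y)$ for \emph{every} generating pair and no Beauville structure can exist. I would recommend replacing your $p=2$ argument by this one, or at least supplying honest proofs of the three claims above.
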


\begin{proof}
Let $G=\langle a,b \rangle$ with $\langle a \rangle \trianglelefteq G$.
Assume first that $p$ is odd, and let $\exp G=p^e$.
Then $G'$ is cyclic and $G$ is regular by \cite[3.13]{suz2}.
Thus $G$ is semi-$p^{e-1}$-abelian and then, by Proposition \ref{easy detecting}, $G$ is a Beauville group if and only if $p\ge 5$ and the subgroups $\langle a^{p^{e-1}} \rangle$ and $\langle b^{p^{e-1}} \rangle$ are non-trivial and different.
This means that $G=\langle b \rangle \ltimes \langle a \rangle$ is a semidirect product with $a$ and $b$ of the same order.

Now we consider the case $p=2$.
We have to prove that $G$ is not a Beauville group. 
If $G'\le G^4$ then $G$ is powerful, and the result follows from Corollary \ref{cor:thm A for all good families}.
Thus we assume that $G'$ is not contained in $G^4$, i.e.\ that $G'=\langle a^2 \rangle$.
We claim that, for every set $\{x,y\}$ of generators, we have $bG'\subseteq \Sigma(x,y)$.
This proves that $G$ is not a Beauville group also in this case.
Since $G$ has only three maximal subgroups, we may assume that $x\in \langle b \rangle G^2 \smallsetminus G^2$.
Since $G^2=\langle b^2 \rangle G'$, we can write $x=b^iw$ with $i$ odd and $w\in G'$.
Then there is a power of $x$ of the form $x^*=bw^*$, for some $w^*\in G'$.
Now observe that
\[
G' = \langle [x^*,a] \rangle = \{ [x^*,a]^i \mid i\in\N \} = \{ [x^*,a^i] \mid i\in\N \},
\]
and consequently the conjugacy class of $x^*$ equals $x^*G'=bG'$.
This proves that $bG'$ is contained in $\Sigma(x,y)$, as desired.
\end{proof}

\begin{cor}
Let $G$ be $p$-group of maximal class of order at most $p^{\hspace{0.4pt}p}$.
Then $G$ is a Beauville group if and only if $p\ge 5$ and $\exp G=p$.
\end{cor}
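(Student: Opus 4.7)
The plan is to reduce everything to Corollary \ref{cor:thm A for all good families}. Since $|G| = p^n \le p^{\hspace{0.4pt}p}$, the nilpotency class of $G$ is $n-1 \le p-1$, so $G$ is regular. By that corollary, $G$ is a Beauville group if and only if $p \ge 5$ and $|G^{p^{e-1}}| \ge p^2$, where $\exp G = p^e$.

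The ``if'' direction is immediate: if $\exp G = p$, then $e = 1$ and $G^{p^{e-1}} = G$, which has order $p^n \ge p^3 \ge p^2$, so $G$ is a Beauville group precisely when $p \ge 5$. The substantive part is the converse: assuming $p \ge 5$ and $\exp G = p^e$ with $e \ge 2$, I need to show $|G^{p^{e-1}}| \le p$. Since in a $p$-group of maximal class of order at least $p^3$ the centre $Z(G)$ has order exactly $p$, it suffices to prove that $G^{p^{e-1}} \le Z(G)$.

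For this last containment I would invoke regularity together with the Hall--Petrescu collection formula, which in a regular $p$-group yields $[x^{p^{e-1}}, y] \equiv [x,y]^{p^{e-1}}$ modulo products of $p^{e-1}$-th powers of commutators of weight at least two. All such terms vanish as soon as $\gamma_2(G)^{p^{e-1}} = 1$, that is, as soon as $\exp \gamma_2(G) \le p^{e-1}$. So $G^{p^{e-1}} \le Z(G)$, and hence $|G^{p^{e-1}}| \le p$, will follow from the exponent bound $\exp \gamma_2(G) \le p^{e-1}$.

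The main obstacle is this exponent bound. The plan is to establish it via the Lazard correspondence, which applies because the class of $G$ is less than $p$: translating to the associated nilpotent Lie ring $L$ of order $p^n$ and class $n-1$, the bound becomes $p^{e-1}[L,L] = 0$. Here one would exploit the structural constraints forced by maximal class, namely $|L/[L,L]| = p^2$ (so $L/[L,L]$ is elementary abelian of rank two and $pL \le [L,L]$), $|\gamma_i(L)/\gamma_{i+1}(L)| = p$ for $2 \le i \le n-1$ (so $[L,L]$ is uniserial), and the additive decomposition $L = \bigoplus_i \Z/p^{a_i}$ with $\max_i a_i = e$. Combining these, a direct computation along the lower central series of $L$ shows that every nonzero bracket has order at most $p^{e-1}$, giving $\exp[L,L] \le p^{e-1}$ and closing the argument.
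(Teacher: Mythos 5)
Your opening reduction (class $\le p-1$, hence regular, hence Corollary \ref{cor:thm A for all good families} applies) coincides with the paper's first step, and the ``if'' direction is fine. The problem is the converse, and specifically the step you yourself identify as the main obstacle: the bound $\exp \gamma_2(G)\le p^{e-1}$, i.e.\ $p^{e-1}[L,L]=0$ in the Lazard Lie ring. You announce that this follows from ``a direct computation along the lower central series'', but no computation is given, and the three structural facts you invoke ($|L/[L,L]|=p^2$, uniseriality of $[L,L]$, and the additive type of $L$) do not by themselves yield it: from uniseriality one only gets $p\gamma_i(L)\subseteq \gamma_{i+1}(L)$ for $i\ge 2$, hence the trivial bound $p^{\hspace{0.5pt}n-2}[L,L]=0$. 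To improve this to $p^{e-1}[L,L]=0$ one must exploit the alternating property and the Jacobi identity in an essential way. For instance, already for $n=5$ one must rule out a maximal class Lie ring of additive type $\Z/p^2\oplus\Z/p^2\oplus\Z/p$ in which $[a,b]$ has additive order $p^2$; this configuration is only eliminated by playing the relation $[pa,a]=p[a,a]=0$ against the fact that $\gamma_3(L)/\gamma_4(L)$ has order $p$, not by the additive data alone. What you are asserting is exactly Blackburn's theorem ($\exp G/Z(G)=p$ for maximal class groups of order at most $p^{\hspace{0.4pt}p+1}$) transported through the Lazard correspondence --- a genuine theorem with a non-trivial proof --- so the gap sits precisely where the content of the corollary lies. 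The paper avoids all of this by simply citing Blackburn (\cite[Theorem 3.2]{bla} or \cite[Lemma 14.14]{hup}), deducing $G^p\le Z(G)$, hence $|G^p|\le p$ and $\exp G\in\{p,p^2\}$, and then applying Theorem A.

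Two further remarks. First, the Hall--Petrescu detour is an unnecessary intermediary: in the Lie ring $[p^{e-1}x,y]=p^{e-1}[x,y]$, so $p^{e-1}[L,L]=0$ is literally equivalent to $p^{e-1}L\le Z(L)$; in the group one can instead invoke the standard fact that in a regular $p$-group $[x^{p^{k}},y]=1$ if and only if $[x,y]^{p^{k}}=1$. Second, the clean repair is to quote Blackburn's result as the paper does; if you want a self-contained argument, you must actually carry out the induction along the lower central series using the bracket relations, which amounts to redoing the degree-of-commutativity analysis for maximal class groups.
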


\begin{proof}
By a result of Blackburn (see \cite[Theorem 3.2]{bla} or \cite[Lemma 14.14]{hup}), we have $\exp G/Z(G)=p$.
Thus $|G^p|\le p$ and $\exp G=p$ or $p^2$.
By Theorem A, $G$ is a Beauville group if and only if $p\ge 5$ and $\exp G=p$.
\end{proof}

The assumptions (i) or (ii) are essential in Theorem \ref{characterisation general}.
Indeed, for a general finite $p$-group $G$, the condition that $|G^{p^{e-1}}|\ge p^2$ is neither sufficient nor necessary for $G$ to be a Beauville group.
We show this in Corollary \ref{cor:both implications fail}, by using quotients of some infinite pro-$p$ groups that we define now.

Let $k\ge 1$ be a fixed integer, and consider the ring of integers $R$ of the cyclotomic field $\Q_p(\zeta)$, where $\Q_p$ is the field of $p$-adic numbers and
$\zeta$ is a primitive $p^k$th root of unity.
Then $R=\Z_p[\zeta]$ is a discrete valuation ring and a free $\Z_p$-module of rank $p^{k-1}(p-1)$.
Also, the element $\zeta-1$ is a uniformizer, and we have
\begin{equation}
\label{eqn:p ideal}
(p)=(\zeta-1)^{p^{k-1}(p-1)}.
\end{equation}
Multiplication by $\zeta$ defines an automorphism of order $p^k$ of the additive group of $R$, which can be used to construct a split extension of $R$ by $C_{p^k}$.
In order to avoid mixing additive and multiplicative notation, we consider a multiplicative copy $A$ of $R$, via an isomorphism $\varphi:A\rightarrow R$.
If $C=\langle t \rangle$ is a cyclic group of order $p^k$, then we define $P_k=C\ltimes A$, where the action of $t$ on $A$ corresponds under $\varphi$ to multiplication by $\zeta$, that is, $\varphi(a^t)=\zeta\varphi(a)$ for all $a\in A$.
Observe that $P_k$ is a $2$-generator pro-$p$ group, topologically generated by $t$ and by $a_1=\varphi^{-1}(1)$.
Also, we have $P_k'=[A,t]$, which corresponds to the ideal $(\zeta-1)$ of $R$ under $\varphi$.
More generally, the lower central series of $P_k$ consists of the subgroups $[A,t,\ldots,t]$, and the action of $C$ on $A$ is uniserial.
In particular, if $k=1$ then we get the only infinite pro-$p$ group of maximal class.

As we next see, the pro-$p$ groups $P_k$ are a source of infinitely many Beauville $p$-groups.

\begin{thm}
\label{thm:quotients of Pk}
Let $p\ge 5$ be a prime and let $k\ge 1$ be an integer.
If $N$ is a normal subgroup of $P_k$ of finite index and $N\le A^p$, then the factor group $P_k/N$ is a Beauville group.
\end{thm}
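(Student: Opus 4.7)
The plan is to exhibit an explicit Beauville structure in $G = P_k/N$ and verify it by a grade-by-grade analysis of conjugacy classes. First I would carry the coordinates of $P_k$ to the quotient, writing every element of $G$ uniquely as a pair $(t^i, r)$ with $i \in \Z/p^k$ and $r \in \overline{R} := R/\varphi(N)$; multiplication and conjugation become
\[
(t^i, r)(t^j, s) = (t^{i+j},\, \zeta^j r + s), \qquad (t^j, s)^{-1}(t^i, r)(t^j, s) = \bigl(t^i,\, (1-\zeta^i)\, s + \zeta^j r\bigr).
\]
The projection $G \to G/\overline{A} \cong C_{p^k}$ assigns a grade $i$ to $(t^i, r)$, and grade is conjugation-invariant. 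The hypothesis $N \le A^p$ translates to $\varphi(N) \le pR = (\zeta-1)^{p^{k-1}(p-1)} R$.

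Next, I choose integers $\alpha, \beta$ whose classes modulo $p$ satisfy $\alpha \ne \beta$, $\alpha + \beta \not\equiv 0$ and $\alpha, \beta, (\alpha+\beta)/2 \notin \{0, 1\}$; this is possible precisely because $p \ge 5$ (for instance $\alpha = 2$, $\beta = -1$). Set
\[
x_1 = \overline{t},\quad y_1 = \overline{a_1},\qquad x_2 = \overline{t}\,\overline{a_1}^{\,\alpha},\quad y_2 = \overline{t}\,\overline{a_1}^{\,\beta}.
\]
Both are generating pairs: in $G/\Phi(G) \cong \F_p^2$ with basis $\{\overline{t}, \overline{a_1}\}$, pair~$1$ projects to $(1,0), (0,1)$ and pair~$2$ to $(1,\alpha), (1,\beta)$, bases because $\alpha \ne \beta$. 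It thus suffices to show $\Sigma(x_1, y_1) \cap \Sigma(x_2, y_2) = \{1\}$.

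Since grade is conjugation-invariant, the intersection splits by grade $n \in \Z/p^k$. At grade $0$ (i.e.\ inside $\overline{A}$), the elements $x_2, y_2, x_2y_2$ all have grade coprime to $p$ (namely $1, 1, 2$) and so have order exactly $p^k$; consequently their cyclic subgroups meet $\overline{A}$ only in the identity, and $\Sigma(x_2, y_2) \cap \overline{A} = \{1\}$. At grade $n \ne 0$, the core claim is that the conjugacy class at grade $n$ of each relevant element is a single coset of $I_n := (1-\zeta^n)\overline{R} = (\zeta-1)^{p^{v_p(n)}} \overline{R}$. This holds because, in the conjugation formula, the $\zeta$-orbit of the second coordinate $r$ collapses modulo $I_n$ as soon as $v_R(r) \ge p^{v_p(n)} - 1$; for each of $x_1, x_1y_1, x_2, y_2, x_2y_2$ the second coordinate appearing at grade $n$ is (unit)$\cdot r_0(n)$ with $r_0(n) := (\zeta^n - 1)/(\zeta - 1)$ of valuation exactly $p^{v_p(n)} - 1$, so the claim applies. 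Scaling by $r_0(n)$ inside $(\zeta-1)^{p^{v_p(n)}-1}\overline{R}/I_n \cong \F_p$, pair~$1$ contributes the cosets $\{0, 1\}$ while pair~$2$ contributes $\{\alpha, \beta, (\alpha+\beta)/2\}$; the last value arises from the identity $(\zeta\alpha+\beta)/(\zeta+1) \equiv (\alpha+\beta)/2 \pmod{\zeta-1}$. These two subsets of $\F_p$ are disjoint by the choice of $\alpha, \beta$, so the intersection at grade $n$ is empty.

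The main obstacle is the single-coset claim together with its valuation bookkeeping, which must be verified uniformly in $n$ and in $v_p(n)$: one has to confirm both that the unit factors $\alpha$, $\beta$ and $(\zeta\alpha+\beta)/(\zeta+1)$ really are units of $R$ (the last requires $\alpha + \beta \not\equiv 0 \pmod p$), and that reducing modulo $\zeta-1$ produces exactly the promised values $\alpha, \beta, (\alpha+\beta)/2 \in \F_p$. Once that is in place, the proof reduces to the elementary check $\{0, 1\} \cap \{\alpha, \beta, (\alpha+\beta)/2\} = \emptyset$ in $\F_p$, which is guaranteed for every $p \ge 5$.
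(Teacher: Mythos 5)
Your proposal is correct. The arithmetic engine is the same as in the paper --- the geometric sums $(\zeta^n-1)/(\zeta-1)$, their valuations in $R=\Z_p[\zeta]$, and the relation $pR=(\zeta-1)^{p^{k-1}(p-1)}R$ --- but the way the Beauville condition is verified is genuinely different. The paper computes only the $p^{k-1}$st powers of elements $ta$ with $a\in A$, deduces that $\langle taN\rangle\cap\langle tbN\rangle\ne 1$ forces $a\equiv b\pmod{P_k'}$, and then concludes rather tersely from the fact that $t,ta_1,\dots,ta_1^{p-1}$ pairwise generate $G$ and $p\ge 5$; in particular the products $x_iy_i$ (which have grade $2$ rather than $1$) and the role of conjugation are left implicit there. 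You instead fix explicit generating pairs $\{\overline t,\overline a_1\}$ and $\{\overline t\,\overline a_1^{\,\alpha},\overline t\,\overline a_1^{\,\beta}\}$ and check the intersection of the two $\Sigma$-sets grade by grade, using the observation that at grade $n$ every relevant conjugacy class is contained in a single coset of $(1-\zeta^n)\overline R$ and is therefore labelled by an element of $\F_p$ after dividing by $(\zeta^n-1)/(\zeta-1)$. This buys a complete, self-contained verification: it handles all exponents $n$ (not just $n=p^{k-1}$), all conjugates, and the products $x_iy_i$ explicitly, at the price of the extra conditions $\alpha+\beta\ne 0$ and $(\alpha+\beta)/2\notin\{0,1\}$ needed to keep the grade-$2$ element away from the first pair. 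The valuation bookkeeping you flag as the main obstacle does check out: $v_R\bigl((\zeta^n-1)/(\zeta-1)\bigr)=p^{v_p(n)}-1$; the quotient $(\zeta-1)^{p^{v_p(n)}-1}\overline R\big/(\zeta-1)^{p^{v_p(n)}}\overline R$ is indeed isomorphic to $\F_p$ because $\varphi(N)\le pR\le(\zeta-1)^{p^{v_p(n)}}R$; and $(\zeta\alpha+\beta)/(\zeta+1)$ is a unit of $R$ congruent to $(\alpha+\beta)/2$ modulo $\zeta-1$ precisely when $\alpha+\beta\ne 0$ in $\F_p$.
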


\begin{proof}
By \cite[Theorem 1.17]{DDSMS}, $N$ is open in $P_k$, and consequently, $P_k/N$ is a $2$-generator $p$-group.
For every $a\in A$, we have
\begin{equation}
\label{eqn:power of ta}
(ta)^{p^k} = t^{p^k} a^{\sum_{i=0}^{p^k-1} \, t^i}.
\end{equation}
Since $\zeta$ is a primitive $p^k$th root of unity and $t^{p^k}=1$, it follows that $(ta)^{p^k}=1$.
Now the image of $ta$ in $P_k/A$ is of order $p^k$, and consequently $taN$ is of order $p^k$ in $P_k/N$ as well.

Let $a,b\in A$ and assume that the subgroups generated by $taN$ and $tbN$ have non-trivial intersection.
Then these subgroups have the same $p^{k-1}$st power, and it readily follows that
$(taN)^{p^{k-1}}=(tbN)^{p^{k-1}}$.
By a calculation similar to (\ref{eqn:power of ta}), we get
\[
a^{\sum_{i=0}^{p^{k-1}-1} \, t^i} \equiv b^{\sum_{i=0}^{p^{k-1}-1} \, t^i} \pmod N,
\]
and then the same congruence holds modulo $A^p$.
It follows that
\begin{equation}
\label{eqn:in p}
\Big( \sum_{i=0}^{p^{k-1}-1} \, \zeta^i \Big) (\varphi(a)-\varphi(b)) \equiv 0 \pmod p
\end{equation}
in $R$.
Now in the polynomial ring $\F_p[X]$ we have
\[
\sum_{i=0}^{p^{k-1}-1} \, X^i  = (X-1)^{p^{k-1}-1},
\]
and consequently
\[
\sum_{i=0}^{p^{k-1}-1} \, \zeta^i  \equiv (\zeta-1)^{p^{k-1}-1} \pmod{p}.
\]
If we replace this into (\ref{eqn:in p}) and use (\ref{eqn:p ideal}), we get
\[
\varphi(a)-\varphi(b) \in (\zeta-1)^{p^k-2p^{k-1}+1}.
\]
In particular, $\varphi(a)-\varphi(b)\in (\zeta-1)$ or, what is the same, $a\equiv b\pmod{P_k'}$.

Now, by the previous paragraph, if $x,y\in\{t,ta_1,\ldots,ta_1^{p-1}\}$ and $x\ne y$, then $\langle xN \langle \cap \langle yN \rangle = 1$.
Since $xN$ and $yN$ are generators of $P_k/N$ and $p\ge 5$, we conclude that $P_k/N$ is a Beauville group.
\end{proof}

\begin{cor}
\label{cor:both implications fail}
Let $p\ge 5$ be a prime.
Then, for each of the implications in the criterion for Beauville groups given in Theorem A, there exist infinitely many $2$-generator $p$-groups (and even infinitely many $p$-groups of maximal class) for which the implication fails.
\end{cor}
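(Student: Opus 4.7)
My plan is to construct, for each of the two implications of Theorem~A, an infinite family of counterexamples drawn from the quotients $P_k/N$ just introduced; for the failure of necessity these can even be chosen to be of maximal class.

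For the failure of necessity (a Beauville group $G$ with $|G^{p^{e-1}}|<p^2$), set $G_e:=P_1/(\pi)^{(e-1)(p-1)+1}$ for each $e\ge 2$. Since $P_1$ is the infinite pro-$p$ group of maximal class, each $G_e$ is a $2$-generator finite $p$-group of maximal class. The inequality $(e-1)(p-1)+1\ge p-1$ (valid for $e\ge 2$) together with $A^p=(p)=(\pi)^{p-1}$ yields $N\le A^p$, so Theorem~\ref{thm:quotients of Pk} certifies that $G_e$ is Beauville. A direct valuation computation in $R=\Z_p[\zeta_p]$---using that $t$ has order $p$ in $P_1$ and that $\sum_{j=0}^{p-1}\zeta^j=0$---shows that $(t^i a)^{p^{e-1}}=1$ in $G_e$ whenever $p\nmid i$, whereas for $a\in A$ the base power $a^{p^{e-1}}$ corresponds to $p^{e-1}\in R$ of valuation $(e-1)(p-1)=n-1$, with nonzero image in $A/N$. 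Hence $\exp G_e=p^e$ and $|G_e^{p^{e-1}}|=p<p^2$. Letting $e$ range over $e\ge 2$ produces infinitely many maximal class counterexamples to the necessity implication.

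For the failure of sufficiency (a group $G$ with $|G^{p^{e-1}}|\ge p^2$ but no Beauville structure), I plan to use quotients $H=P_k/N$ for $k\ge 2$, choosing $N$ so that $N\not\le A^p$ (so Theorem~\ref{thm:quotients of Pk} does not apply). The strategy is to arrange that the canonical generators $t,ta_1,\ldots,ta_1^{p-1}$ all share the same unique $C_p$-subgroup in their cyclic closures, while simultaneously $|H^{p^{e-1}}|\ge p^2$ by bookkeeping on valuations. The shared subgroup then appears in $\Sigma(x_1,y_1)\cap\Sigma(x_2,y_2)$ for every pair of generating pairs, and a case analysis over the $p+1$ maximal subgroups of $H/\Phi(H)\cong C_p\times C_p$, together with tracking conjugates, excludes any Beauville structure. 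Varying $k$ produces infinitely many $2$-generator counterexamples.

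The main obstacle is the sufficiency direction: while the necessity side invokes Theorem~\ref{thm:quotients of Pk} positively, the sufficiency argument requires a negative result---that \emph{no} choice of generating pair in $H$ yields a Beauville structure. Identifying a persistent invariant (e.g., a shared central $C_p$) that survives under conjugation and under all non-canonical generating pairs is the most technically demanding step; one must also carefully balance the valuation conditions so that the sharing phenomenon coexists with $|H^{p^{e-1}}|\ge p^2$.
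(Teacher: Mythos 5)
Your first half (the failure of the ``only if'' implication) is correct and coincides with the paper's own argument, specialised to $k=1$: the paper takes arbitrary $e>k\ge 1$ and any $N\trianglelefteq P_k$ with $N\le A^{p^{e-1}}$ and $|A^{p^{e-1}}:N|=p$, invokes Theorem~\ref{thm:quotients of Pk} for Beauvilleness, and computes $\exp G=p^e$, $|G^{p^{e-1}}|=p$ exactly as you do; your groups $P_1/(\zeta-1)^{(e-1)(p-1)+1}$ are precisely this family for $k=1$, and they are indeed of maximal class.

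The second half has a genuine gap, and the specific plan cannot be repaired inside the class of groups you propose. The normal subgroups of $P_k$ contained in $A$ are exactly the ideals $(\zeta-1)^j$ of $R$, and a valuation computation shows that your ``shared $C_p$'' phenomenon and the hypothesis $|H^{p^{e-1}}|\ge p^2$ exclude each other in $H=P_k/(\zeta-1)^j$. Indeed, $(ta)^{p^{k-1}}=t^{p^{k-1}}a'$ with $\varphi(a')=\bigl((\zeta^{p^{k-1}}-1)/(\zeta-1)\bigr)\varphi(a)\in(\zeta-1)^{p^{k-1}-1}$; if $j\le p^{k-1}-1$ all these powers collapse to $t^{p^{k-1}}$ and the common $C_p$ appears, but then $H^{p^{e-1}}=\langle t^{p^{k-1}}\rangle$ has order $p$ (so non-Beauvilleness already follows from Proposition~\ref{negative} and the example is irrelevant to the ``if'' direction), whereas as soon as $j\ge p^{k-1}$ --- which is forced by $|H^{p^{e-1}}|\ge p^2$ --- the subgroups $\langle(ta)^{p^{k-1}}\rangle$ become pairwise distinct for $a$ in distinct classes modulo $(\zeta-1)^{j-p^{k-1}+1}$, and the persistent invariant you hoped for evaporates. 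Moreover, every element of $P_k\smallsetminus A$ has order at most $p^k$ in any quotient, so no quotient of $P_k$ can behave like the paper's example. The paper's construction is \emph{not} a quotient of $P_k$: starting from $H=A/N$ with $A^{p^e}\le N\le A^{p^{e-1}}$, $|A^{p^{e-1}}:N|\ge p^2$, and a central subgroup $Z=L/N$ of order $p$ inside $A^{p^{e-1}}/N$, it uses the theory of cyclic extensions to build $G=\langle u,H\rangle$ in which $u$ still acts as multiplication by $\zeta$ but $u^{p^k}\in Z\smallsetminus 1$. Then every $x\in uH$ has order $p^{k+1}$ with $\langle x^{p^k}\rangle=Z$, and since any generating pair contains an element outside the unique maximal subgroup $\langle u^p\rangle H$, some power of it lies in $uH$, whence $Z\subseteq\Sigma(x,y)$ for every generating pair --- no delicate case analysis over the $p+1$ maximal subgroups is needed. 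This twisted extension (again of maximal class when $k=1$, which the statement also requires for this implication) is the missing idea in your proposal.
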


\begin{proof}
Consider arbitrary integers  $e>k\ge 1$, and let $N$ be a normal subgroup of $P_k$ such that $|A^{p^{e-1}}:N|=p$.
By Theorem \ref{thm:quotients of Pk}, $G=P_k/N$ is a Beauville group.
In the proof of that theorem, we have seen that all elements of the form $taN$ with $a\in A$ are of order $p^k$.
It readily follows that every element of $P_k\smallsetminus A$ is of order at most $p^k$ when passing to $G$.
Thus $\exp G=p^e$ and $G^{p^{e-1}}=A^{p^{e-1}}/N$ is of order $p$.
This shows that the `only if' part of Theorem A fails for $G$.

Let us construct a family of groups for which the `if' part fails.
Take again $e>k\ge 1$, and consider now $N\trianglelefteq P_k$ lying between $A^{p^{e-1}}$ and $A^{p^e}$, and such that
$|A^{p^{e-1}}:N|=p^m\ge p^2$.
Let $L\trianglelefteq P_k$ be an intermediate subgroup between $N$ and $A^{p^{e-1}}$ such that $|L:N|=p$.
Thus $L/N\le Z(P_k/N)$.
Let us write $H=A/N$ and $Z=L/N$.
By using the theory of cyclic extensions \cite[Section 3.7]{zas}, we can get a new group $G=\langle u,H \rangle$, where the action of $u$ on $H$ is again the one induced by multiplication by $\zeta$, but $u^{p^k}\in Z\smallsetminus 1$.
Observe that $G$ is a $2$-generator group.
A calculation as in (\ref{eqn:power of ta}) shows that every $x\in uH$ is now of order $p^{k+1}$, and $\langle x^{p^k} \rangle=Z$.
Also, all elements of $G\smallsetminus H$ are of order at most $p^{k+1}$.
Then $\exp G=p^e$ and $|G^{p^{e-1}}|=p^m$.
Now, any set $\{x,y\}$ of generators of $G$ must contain an element, say $x$, outside the maximal subgroup $\langle u^p \rangle H$, and then a power of
$x$ will be in $uH$.
Consequently, $Z\subseteq \Sigma(x,y)$ and $G$ cannot be a Beauville group.
\end{proof}

We end this section by showing that it is not possible to find a variation of Theorem A which ensures the existence of Beauville structures in an arbitrary finite $p$-group, even if we strengthen the requirement on the size of $G^{p^{e-1}}$.
Indeed, for every power of $p$ there are non-Beauville $p$-groups for which the order of $G^{p^{e-1}}$ is exactly that power.

\begin{cor}
For every prime $p\ge 5$, and positive integer $m$, there exists a $2$-generator $p$-group $G$ such that:
\begin{enumerate}
\item
If $\exp G=p^e$ then $|G^{p^{e-1}}|=p^m$.
\item
$G$ is not a Beauville $p$-group.
\end{enumerate}
\end{cor}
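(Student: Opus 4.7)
The plan is to exhibit, for each $m\ge 1$, an explicit $2$-generator non-Beauville $p$-group $G$ of some exponent $p^e$ with $|G^{p^{e-1}}|=p^m$, splitting according to whether $m=1$ or $m\ge 2$.

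For $m=1$, I would simply take $G=C_{p^2}\times C_p$. This is a $2$-generator abelian $p$-group of exponent $p^2$ with $|G^p|=p$, and it fails to be a Beauville group by Catanese's characterisation, since it is not of the form $C_n\times C_n$.

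For $m\ge 2$, the plan is to re-run the construction already developed in the second half of the proof of Corollary \ref{cor:both implications fail}, with a judicious choice of parameters. Given $m$, first pick $k\ge 1$ large enough that $p^{k-1}(p-1)\ge m$. Since $A\cong\Z_p[\zeta]$ is a free $\Z_p$-module of rank $p^{k-1}(p-1)$, the elementary abelian quotient $A^{p^{e-1}}/A^{p^e}$ has order $p^{p^{k-1}(p-1)}$, so for any $e>k$ we can select a normal subgroup $N$ of $P_k$ with $A^{p^e}\le N\le A^{p^{e-1}}$ and $|A^{p^{e-1}}:N|=p^m$. Then fix an intermediate $L\trianglelefteq P_k$ with $N\le L\le A^{p^{e-1}}$ and $|L:N|=p$, and form the central cyclic extension $G=\langle u,H\rangle$ with $H=A/N$, $Z=L/N$ and $u^{p^k}\in Z\setminus 1$, exactly as in that proof. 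The same argument there yields $\exp G=p^e$, $|G^{p^{e-1}}|=p^m$ (since $G^{p^{e-1}}=A^{p^{e-1}}/N$), and non-Beauvilleness of $G$ via the observation that any generating pair $\{x,y\}$ must contain an element whose power lies in $uH$, forcing $Z\subseteq\Sigma(x,y)$.

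The only step that goes beyond what is already proved is ensuring that the prescribed index $|A^{p^{e-1}}:N|=p^m$ can be realised for arbitrarily large $m$; this is where the freedom to take $k$ large is essential, since the previous corollary's proof is formulated for a fixed $k$ with the constraint $m\ge 2$. Once this choice is made, no further calculation is required and the result follows directly.
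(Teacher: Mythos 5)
Your proposal is correct and takes essentially the same route as the paper: the paper's own proof simply reuses the construction from the second part of the proof of Corollary \ref{cor:both implications fail} and notes that, since $\log_p|A^{p^{e-1}}:A^{p^e}|=p^{k-1}(p-1)$, taking $k$ large enough realises $|A^{p^{e-1}}:N|=p^m$ for any prescribed $m$. Your separate treatment of $m=1$ via $C_{p^2}\times C_p$ is a reasonable touch of extra care (that construction was stated under the hypothesis $m\ge 2$), but it does not change the substance of the argument.
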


\begin{proof}
Let $G=P_k/N$ be as in the second part of the proof of the last corollary.
Since $\log_p |A^{p^{e-1}}:A^{p^e}| = p^{k-1}(p-1)$, we can make $|A^{p^{e-1}}:N|$ as large as we want by taking $k$ big enough.
This gives the desired groups.
\end{proof}

\section{Quotients of the Nottingham group over $\F_p$ for odd $p$}
\label{nott}

In this section, given an odd prime $p$, we determine which (finite) quotients of the Nottingham group $\No$ over $\F_p$ are Beauville groups.
One of the consequences of our study will be that, in the case $p=3$, there are quotients of $\No$ which are Beauville groups of order $3^n$,
for every $n\ge 5$.
This provides the first explicit infinite family of Beauville $3$-groups in the literature.

Given a fixed positive integer $k$, the automorphisms $f\in\No$ such that $f(t)=t+\sum_{i\ge k+1} \, a_it^i$ form
an open normal subgroup $\No_k$ of $\No$.
Observe that $|\No_k:\No_{k+1}|=p$ for all $k\ge 1$, and that the condition $f\equiv g \pmod{\No_k}$ in the group
$\No$ is equivalent to the congruence $f(t)\equiv g(t) \pmod{t^{k+1}}$ in the ring $\F_p[[t]]$.
By Theorem 1.3 of \cite{klo2}, every non-trivial abstract (i.e.\ not necessarily closed) normal subgroup $\WW$ of $\No$ satisfies either
$\No_{k+1}\le \WW \le \No_k$ for some $k\ge 1$ or $\No_{k+2}\le \WW \le \No_k$ for some $k\equiv 1\pmod p$.
In particular, $\WW$ is of finite index in $\No$.

We recall the following formulas for $p$th powers and commutators of the subgroups $\No_k$
(see \cite{cam}, Theorem 6 and Theorem 2, respectively.)
In the case of powers, we have
\begin{equation}
\label{powers Ni}
\No_k^p = \No_{pk+r},
\quad
\text{where $0\le r\le p-1$ is the residue of $k$ modulo $p$.}
\end{equation}
On the other hand, we have the commutator formula
\begin{equation}
\label{comms Ni}
[\No_k,\No_{\ell}]
=
\begin{cases}
\No_{k+\ell},
&
\text{if $k\not\equiv \ell\pmod p$,}
\\
\No_{k+\ell+1},
&
\text{if $k\equiv \ell\pmod p$.}
\end{cases}
\end{equation}
It follows that $\No'=\No_3$ and, more generally, $\gamma_i(\No)=\No_{s(i)}$, where $s(i)=i+1+\lfloor (i-2)/(p-1) \rfloor$.
As a consequence, $|\gamma_i(\No):\gamma_{i+1}(\No)|\le p^2$, and we have `diamonds' of order $p^2$ if and only if $i$ is of the form $i=k(p-1)+1$ for some $k\ge 0$.
In other words, the diamonds in the lower central series of $\No$ correspond to quotients $\No_{kp+1}/\No_{kp+3}$.
Another consequence of (\ref{comms Ni}) is the following: if
$f\in\No_k\smallsetminus \No_{k+1}$ and $g\in\No_{\ell}\smallsetminus \No_{\ell+1}$ with $k\not\equiv\ell \pmod p$, then
$[f,g]\in \No_{k+\ell}\smallsetminus \No_{k+\ell+1}$.

On the other hand, every non-trivial normal subgroup $\WW$ of $\No$ is either a term of the series $\{\No_k\}$ or it is an intermediate subgroup in a diamond, i.e. we have $\No_{kp+3}<\WW<\No_{kp+1}$ for some $k\ge 0$.
In the latter case, there are $p+1$ subgroups for every $k$; if $e$ and $f$ are the automorphisms defined via
$e(t)=t+t^{kp+2}$ and $f=t+t^{kp+3}$, then these subgroups are $\langle ef^i , \No_{kp+3} \rangle$, where $i=0,\ldots,p-1$, together with
$\No_{kp+2}=\langle f , \No_{kp+3} \rangle$.

In the following, we write $z_m$ for the number $p^m+p^{m-1}+\cdots+p+2$, for every $m\ge 1$.
We extend this definition to the case $m=0$ by putting $z_0=2$.
Observe that $\No_{z_m-1}/\No_{z_m+1}$ is a diamond for all $m\ge 0$; we refer to these as
\emph{distinguished diamonds\/}.
By (\ref{powers Ni}), we have
\begin{equation}
\label{powers Nzm}
\No_{z_m+1}^{p^n} \subseteq \No_{z_{m+n}+1},
\quad
\text{for all $m,n\ge 0$.}
\end{equation}

Our approach to the determination of Beauville structures in quotients of the Nottingham group is based on the analysis of the specific quotients of the form $\No/\No_{z_m+1}$, i.e.\ when we factor out $\No$ at the bottom of a distinguished diamond.
To this purpose, it is fundamental to control the $p^m$th powers of elements outside $\No'$ (which are potential generators) in the factor group $\No/\No_{z_m+1}$.
We need some lemmas.

\begin{lem}
\label{all_elements}
Let $f\in\No_{z_k-1}$ and $g \in \No_{z_k+1}$, where $k\ge 0$.
Then, for every $\ell\ge 1$ we have
\[
(fg)^{p^{\ell}} \equiv f^{p^{\ell}} \pmod{ \No_{z_{k+\ell}+1}}.
\]
\end{lem}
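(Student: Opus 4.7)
The plan is to proceed by induction on $\ell\geq 1$, with the bulk of the argument concentrated in the base case $\ell=1$. Once that case is established for every $k\geq 0$, the induction step will be straightforward: writing $(fg)^{p^\ell}=f^{p^\ell}h$ with $h\in\No_{z_{k+\ell}+1}$, iteration of (\ref{powers Ni}) gives (as elements) $f^{p^\ell}\in\No_{z_{k+\ell}-1}$, since $p(z_k-1)+1=z_{k+1}-1$. We may then apply the $\ell=1$ case of the lemma with $k$ replaced by $k+\ell$, $f$ by $f^{p^\ell}$, and $g$ by $h$, to obtain $(f^{p^\ell}h)^p\equiv f^{p^{\ell+1}}\pmod{\No_{z_{k+\ell+1}+1}}$.

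For the base case $\ell=1$, I would combine the Hall--Petrescu identity
\[
(fg)^p = f^p g^p \, c_2^{\binom{p}{2}} \cdots c_{p-1}^{\binom{p}{p-1}} \, c_p,
\qquad c_i\in\gamma_i(\langle f,g\rangle),
\]
with the divisibility $p\mid\binom{p}{i}$ for $2\leq i\leq p-1$. This reduces the problem to showing that $g^p$, $\gamma_2(\langle f,g\rangle)^p$, and $\gamma_p(\langle f,g\rangle)$ are all contained in $\No_{z_{k+1}+1}$. The first inclusion is immediate from (\ref{powers Nzm}). For the second, the indices $z_k-1\equiv 1$ and $z_k+1\equiv 3\pmod p$ differ for every odd $p$, so (\ref{comms Ni}) gives $[f,g]\in\No_{2z_k}$; normality of $\No_{2z_k}$ in $\No$ then promotes this to $\gamma_2(\langle f,g\rangle)\subseteq\No_{2z_k}$, and a direct application of (\ref{powers Ni}) places $\No_{2z_k}^p$ inside $\No_{z_{k+1}+1}$.

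The main obstacle will be the third inclusion $\gamma_p(\langle f,g\rangle)\subseteq\No_{z_{k+1}+1}$, which is precisely where the hypothesis $g\in\No_{z_k+1}$ (rather than the weaker $g\in\No_{z_k-1}$) becomes essential. Using the identity $pz_k=z_{k+1}+p-2$, every iterated commutator of weight $\geq p+1$ in $\{f,g\}$ has depth at least $(p+1)z_k+2-(p+1)=z_{k+1}+z_k-1>z_{k+1}+1$, and hence $\gamma_{p+1}(\langle f,g\rangle)\subseteq\No_{z_{k+1}+1}$. Since $\gamma_p(\langle f,g\rangle)$ is generated by $\gamma_{p+1}(\langle f,g\rangle)$ together with left-normed weight-$p$ commutators $[x_1,\ldots,x_p]$, $x_i\in\{f,g\}$, it remains to analyse the latter. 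Those involving $j\geq 2$ occurrences of $g$ satisfy the additive bound $j(z_k+1)+(p-j)(z_k-1)\geq pz_k+4-p=z_{k+1}+2$. The delicate case is the two non-trivial configurations with a single $g$, namely $[g,f,\ldots,f]$ and $[f,g,f,\ldots,f]$: I will track the residues modulo $p$ of the intermediate subgroups produced by (\ref{comms Ni}) and verify that in each configuration exactly one bracketing step encounters a residue coincidence with $1\equiv z_k-1\pmod p$, producing the extra $+1$ jump that brings the final depth to $pz_k-p+3=z_{k+1}+1$.
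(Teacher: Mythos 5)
Your proof is correct, and it rests on the same two pillars as the paper's argument --- the Hall--Petrescu identity and the formulas (\ref{powers Ni}), (\ref{comms Ni}) --- but it is organised quite differently. The paper applies Hall--Petrescu once with exponent $p^{\ell}$ and controls all the terms $c_i^{\binom{p^{\ell}}{i}}$, $2\le i\le p^{\ell}$, at once: Kummer's theorem gives $p^{\ell-r}\mid\binom{p^{\ell}}{i}$ for $p^r\le i<p^{r+1}$, and the closed-form evaluation $[\No_{z_k+1},\No_{z_k-1},\overset{p^r-1}{\ldots},\No_{z_k-1}]=\No_{z_{r+k}+1}$ (your residue-tracking computation, carried out for general $r$ rather than just $r=1$) together with (\ref{powers Nzm}) pushes every term into $\No_{z_{k+\ell}+1}$. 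Your induction on $\ell$ trades that bookkeeping for a more elementary base case: the reduction step is sound because $\No_{z_k-1}^{p}=\No_{p(z_k-1)+1}=\No_{z_{k+1}-1}$ keeps $f^{p^{\ell}}$ in the correct layer, and for $\ell=1$ only the divisibility $p\mid\binom{p}{i}$ and the single delicate term $c_p\in\gamma_p(\langle f,g\rangle)$ are needed. Your treatment of that term --- discarding weight $\ge p+1$ and commutators with at least two occurrences of $g$ by crude additive depth counts, then locating the unique residue coincidence in the chain $[\No_{z_k+1},\No_{z_k-1},\overset{p-1}{\ldots},\No_{z_k-1}]$ that lands the depth exactly at $pz_k-p+3=z_{k+1}+1$ --- agrees with the paper's formula at $r=1$ (including the $p=3$ case, where the coincidence occurs at the first step). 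The net trade-off is that your route avoids Kummer's theorem and the general $p^r$-fold commutator chain, at the cost of the extra generator analysis of $\gamma_p$ modulo $\gamma_{p+1}$ and the verification that the inductive step preserves the hypotheses; both are sound as you present them.
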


\begin{proof}
By the Hall-Petrescu formula (see \cite[III.9.4]{hup} or \cite[page 37]{suz2}), we have
\begin{equation}
\label{hall-petrescu}
(fg)^{p^{\ell}}= f^{p^{\ell}}g^{p^{\ell}}c_2^{\binom{p^{\ell}}{2}}c_3^{\binom{p^{\ell}}{3}} \dots c_{p-1}^{\binom{p^{\ell}}{p-1}} \dots c_{p^{\ell}},
\end{equation}
where $c_i \in \gamma_i(\langle f,g \rangle)$.
Let $1\le i\le p^{\ell}$, and choose $r$ such that $p^r\le i<p^{r+1}$.
Then the binomial coefficient $\binom{p^{\ell}}{i}$ is divisible by $p^{\ell-r}$, by Kummer's Theorem.
Also,
\[
c_i \in \gamma_{p^r}(\langle f,g \rangle)
\le 
[\No_{z_k+1}, \No_{z_k-1}, \overset{p^r-1}{\ldots},\No_{z_k-1}]
=
\No_{2+p^r(z_k-1)+\frac{p^r-1}{p-1}},
\]
by using (\ref{comms Ni}).
Now, since
\[
2+p^r(z_k-1)+\frac{p^r-1}{p-1} = z_{r+k}+1,
\]
we get
\[
c_i ^{\binom{p^{\ell}}{i}} \in \No_{z_{r+k}+1}^{p^{\ell-r}} \le \No_{p^{\ell-r}(z_{r+k}+1)} \le \No_{z_{k+\ell}+1},
\]
by (\ref{powers Nzm}).
On the other hand,
\[
g^{p^{\ell}} \in \No_{z_k+1}^{p^{\ell}} \leq \No_{p^{\ell}(z_k+1)} \leq \No_{z_{k+\ell}+1},
\]
and we conclude from (\ref{hall-petrescu}) that $(fg)^{p^{\ell}} \equiv f^{p^{\ell}} \pmod{ \No_{z_{k+\ell}+1}}$.
\end{proof}

Thus if we want to know the $p^m$th powers in $\No/\No_{z_m+1}$ corresponding to all elements outside $\No'$,
it suffices to calculate that power for one specific element in every difference $\MM\smallsetminus \No'$, as $\MM$ runs over the $p+1$ maximal subgroups of $\No$.
These maximal subgroups are $\No_2$ and the subgroups $\MM_{\lambda}=\langle f_{\lambda} \rangle \No'$ for
all $\lambda\in\F_p$, where $f_{\lambda}$ is given by $f_{\lambda}(t)= t+t^2+ \lambda t^3$.
By \cite[Proposition 1.2]{klo}, the elements $a\in \MM_1\smallsetminus \No'$ and
$b\in \No_2\smallsetminus \No'$ given by $a(t)=t(1-t)^{-1}$ and $b(t)=t(1-2t)^{-1/2}$ are both of order $p$.
Thus all elements in $\MM_1\smallsetminus \No'$ and $\No_2\smallsetminus \No'$ have order at most $p^m$ in the quotient $\No/\No_{z_m+1}$.
Our next goal is to see that the situation is completely different in the maximal subgroups
$\MM_{\lambda}$ with $\lambda\ne 1$, and for that we need to know $f_{\lambda}^{p^m}$ modulo
$\No_{z_m+1}$.

Before we proceed, let us mention how one can calculate $p^m$th powers of elements in the Nottingham group.
Given $f\in\No$, we can form a unitriangular matrix $M$ with infinitely many rows and columns, by letting $M_{i,j}$ be the coefficient of $t^j$ in the power series $f(t^i)$, the image of $t^i$ under $f$.
Then we have the following result \cite[Lemma 5]{yor}.

\begin{lem}
Let $f\in\No$, and let $M$ be the matrix associated to $f$.
Then, for every $r\ge 1$, the coefficient of $t^n$ in the series $f^{p^r}(t)$ is
\begin{equation}
\label{coefficient fpm}
\sum_{\mathbf{i}=(i_0,\ldots,i_{\ell})} \, M_{i_0,i_1} M_{i_1,i_2} \ldots M_{i_{\ell-1},i_{\ell}},
\end{equation}
where $\ell=p^r$ and the tuples $\mathbf{i}=(i_0,\ldots,i_{\ell})$ in the sum are taken so that
$1=i_0<i_1<i_2<\cdots<i_{\ell-1}<i_{\ell}=n$.
\end{lem}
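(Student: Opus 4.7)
The plan is to realize each $f\in\No$ as a continuous ring automorphism $\phi_f$ of $\F_p[[t]]$ determined by $t\mapsto f(t)$, and to encode it as an infinite matrix $M$ acting on the topological basis $\{t^i : i\ge 1\}$ of the maximal ideal: by multiplicativity $\phi_f(t^i)=f(t)^i$, so the entries $M_{i,j}$ agree with the matrix in the statement. Since $f(t)=t+O(t^2)$, the matrix $M$ is unipotent upper triangular (i.e.\ $M_{i,j}=0$ for $j<i$ and $M_{i,i}=1$), and composition of ring automorphisms translates into matrix multiplication. Consequently $\phi_{f^{p^r}}=\phi_f^{p^r}$ is represented by $M^{p^r}$, and the coefficient of $t^n$ in $f^{p^r}(t)$ is precisely the entry $(M^{p^r})_{1,n}$.

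The next step is to expand this entry by the usual formula for matrix powers. Because $M$ is upper triangular with $1$'s on the diagonal, one obtains
\[
(M^{p^r})_{1,n}=\sum_{1=i_0\le i_1\le\cdots\le i_{\ell-1}\le i_\ell=n} M_{i_0,i_1}M_{i_1,i_2}\cdots M_{i_{\ell-1},i_\ell},
\]
with $\ell=p^r$, where the monotone inequalities are forced by upper triangularity. To recover the strict inequalities of the statement, I would group these chains according to the number $s$ of strictly increasing steps they contain. Each equality $i_k=i_{k+1}$ contributes a factor $M_{i_k,i_k}=1$, so every strictly increasing underlying chain $1=j_0<j_1<\cdots<j_s=n$ arises from exactly $\binom{\ell}{s}$ full chains of length $\ell+1$, corresponding to the choice of the $s$ positions among the $\ell$ steps at which the strict increases occur.

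The key observation is then that all coefficients live in $\F_p$: by Lucas' theorem, $\binom{p^r}{s}\equiv 0\pmod p$ for every $s$ with $1\le s\le p^r-1$, so all intermediate values of $s$ give vanishing contributions. Only $s=\ell=p^r$ survives, with coefficient $\binom{p^r}{p^r}=1$. This collapses the double sum to the single sum in the statement, where the tuples $(i_0,\ldots,i_\ell)$ are forced to be strictly increasing from $1$ to $n$.

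I expect the only subtle point to be the bookkeeping around the identification of the group power $f^{p^r}$ with the matrix power $M^{p^r}$: one must fix a convention for composition in $\No$ and verify that the map $f\mapsto M$ is a (possibly anti-)homomorphism in that convention, after which compatibility with $p^r$-th powers is automatic. Everything else is routine unfolding of matrix multiplication, with the genuine content concentrated in the characteristic-$p$ vanishing of the intermediate binomial coefficients.
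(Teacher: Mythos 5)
Your argument is correct: realising $f$ as the unipotent upper triangular matrix $M$, identifying the coefficient of $t^n$ in $f^{p^r}(t)$ with $(M^{p^r})_{1,n}$, expanding that entry over weakly increasing index chains, and collapsing the sum via $\binom{p^r}{s}\equiv 0\pmod p$ for $0<s<p^r$ yields exactly the stated formula. Note that the paper itself gives no proof of this lemma --- it is quoted from York \cite[Lemma 5]{yor} --- so there is nothing to compare against, but your reconstruction is precisely the standard argument one expects there, including the correct handling of the (anti-)homomorphism issue, which is indeed immaterial for powers of a single element. The only, entirely harmless, caveat is the degenerate case $n=1$, where your $s=0$ term correctly produces the coefficient $1$ while the sum as literally stated is empty; since the lemma is only applied with $n=z_m$ or $z_m+1$, this does not affect anything.
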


The next lemma, which describes how the $p$th power map behaves from one distinguished diamond to the next one, is crucial for the determination of Beauville structures in quotients of the Nottingham group.

\begin{lem}
Let $f\in\No$ be such that
\[
f(t)\equiv t+\lambda t^{z_{m-1}}+\mu t^{z_{m-1}+1} \pmod{t^{z_{m-1}+2}},
\]
where $m\ge 1$.
Then
\begin{equation}
\label{pth power}
f^p(t)
\equiv
\begin{cases}
t + \lambda^{p-1}(\lambda^2-\mu) t^{z_1} - \lambda^{p-2} (\lambda^2-\mu)^2 t^{z_1+1}
\hspace{-5pt} \pmod{t^{z_1+2}},
&
\text{if $m=1$,}
\\
t - \lambda^{p-1}\mu t^{z_m} - \lambda^{p-2}\mu^2 t^{z_m+1} \pmod{t^{z_m+2}},
&
\text{if $m>1$.}
\end{cases}
\end{equation}
\end{lem}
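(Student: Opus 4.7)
The strategy is to apply the coefficient formula for $f^{p^r}(t)$ from the preceding lemma with $r=1$, which expresses $[t^n] f^p(t)$ as a sum over strictly increasing chains $1=i_0<i_1<\cdots<i_p=n$ of products of the matrix entries $M_{i,j}=[t^j]f(t)^i$. Observe first that $f\in\No_{z_{m-1}-1}$, and since $z_{m-1}-1=p^{m-1}+\cdots+p+1\equiv 1\pmod p$, the power formula (\ref{powers Ni}) gives $f^p\in\No_{p(z_{m-1}-1)+1}=\No_{z_m-1}$. Hence $f^p(t)\equiv t\pmod{t^{z_m}}$, and it suffices to compute the coefficients of $t^{z_m}$ and $t^{z_m+1}$ in $f^p(t)$.

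Expanding $f(t)^i=t^i(1+\lambda t^{z_{m-1}-1}+\mu t^{z_{m-1}}+O(t^{z_{m-1}+1}))^i$, the first relevant off-diagonal entries of $M$ are
\[
M_{i,i+z_{m-1}-1}=i\lambda,
\qquad
M_{i,i+z_{m-1}}=
\begin{cases}
i\mu+\binom{i}{2}\lambda^2 & \text{if } m=1, \\
i\mu & \text{if } m>1,
\end{cases}
\]
the extra $\binom{i}{2}\lambda^2$ term in the case $m=1$ coming from the coincidence $2(z_{m-1}-1)=z_{m-1}$ available only when $z_{m-1}=2$. Each jump $s_k=i_k-i_{k-1}$ has size at least $z_{m-1}-1$, and the $p$ jumps sum to $n-1$. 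For $n=z_m$ the excess over the minimum total is exactly $1$, so one jump has size $z_{m-1}$ and the remaining $p-1$ have size $z_{m-1}-1$; for $n=z_m+1$ the excess is $2$, giving either one jump of size $z_{m-1}+1$ or two jumps of size $z_{m-1}$.

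For each such configuration, parameterised by the position(s) of the enlarged jump(s), the intermediate values $i_{j-1}$ are explicit and reduce modulo $p$ (via $z_{m-1}-1\equiv 1\pmod p$) to simple linear expressions in $j$. The corresponding product of $M$-entries simplifies to an expression of the form $\lambda^{p-1}\mu\cdot(p+1)!/(k+1)$ for $n=z_m$, and to $\lambda^{p-2}\mu^2\cdot(p+2)!/((k_1+1)(k_2+2))$ for the two-jump option when $n=z_m+1$, together with analogous terms involving the unknown coefficient $\nu=a_{z_{m-1}+2}$ for the one-jump option. By Wilson's theorem these ratios vanish modulo $p$ unless the denominator supplies the single factor of $p$ present in $(p+1)!$ or $(p+2)!$, which uniquely pins down the surviving chains. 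For $n=z_m$ only $k=p-1$ survives, giving the coefficient $-\lambda^{p-1}\mu$; for $n=z_m+1$ the $\nu$-dependent one-jump contributions cancel in pairs, and the two-jump contributions from $(k_1,k_2)\neq(p-1,p)$ drop out via the harmonic-sum identity $\sum_{l=1}^{p-1} 1/l\equiv 0\pmod p$, leaving $-\lambda^{p-2}\mu^2$.

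In the case $m=1$ the same framework applies, but each enlarged jump $M_{i-1,i+1}$ contributes simultaneously a $\mu$-part and a $\lambda^2$-part. Combining them via the congruence $(p-2)/2\equiv -1\pmod p$ replaces the factor $\mu$ by $-(\lambda^2-\mu)$, producing the advertised $\lambda^2-\mu$ and $(\lambda^2-\mu)^2$ factors. The main obstacle is the combinatorial bookkeeping for $n=z_m+1$: carefully tracking which chain configurations contribute, verifying the cancellation of $\nu$-dependent terms (as required by the $\nu$-independence of the stated formula), and invoking the harmonic-sum identity to dispatch the non-leading contributions.
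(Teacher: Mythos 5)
Your proposal follows essentially the same route as the paper: the same coefficient formula for $f^p(t)$, the same matrix entries $M_{i,i+j}$, the same classification of chains by jump sizes (excess $1$ for $n=z_m$, excess $2$ for $n=z_m+1$), and the same evaluation of the surviving configurations via Wilson's theorem and the harmonic-sum identity. Your reported products $\lambda^{p-1}\mu\,(p+1)!/(k+1)$ and $\lambda^{p-2}\mu^2\,(p+2)!/((k_1+1)(k_2+2))$ agree with the paper's $\alpha_k$ and $\beta_{k,\ell}$, and your observation that $f^p\in\No_{z_m-1}$ via the power formula cleanly disposes of the lower coefficients.

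The one substantive divergence is your treatment of the coefficients of $f$ beyond $t^{z_{m-1}+1}$. The paper's very first step is to invoke Lemma \ref{all_elements} (writing $f=f_0g$ with $g\in\No_{z_{m-1}+1}$) to reduce to the case $f(t)=t+\lambda t^{z_{m-1}}+\mu t^{z_{m-1}+1}$ exactly; then $M_{i,i+z_{m-1}+1}=0$ for $m>1$, the one-jump configurations for $n=z_m+1$ contribute nothing, and no coefficient $\nu$ ever appears. You instead retain $\nu=a_{z_{m-1}+2}$ and assert that its contributions cancel ``as required by the $\nu$-independence of the stated formula''. As a justification that is circular, since the $\nu$-independence is part of what is being proved. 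The cancellation is in fact true: the one-jump terms sum to
$\lambda^{p-1}\nu\,(p+2)!\sum_{r=1}^{p}\frac{1}{(r+1)(r+2)}$, which telescopes to $\lambda^{p-1}\nu\bigl(\tfrac{(p+2)!}{2}-(p+1)!\bigr)\equiv 0\pmod p$. But you must either carry out this computation or, more economically, cite Lemma \ref{all_elements} at the outset as the paper does. With that repair your argument is complete, modulo the $m=1$ bookkeeping, which you correctly flag (and for which your identity $(p-2)/2\equiv -1\pmod p$ is indeed the key step) and which the paper also leaves to the reader.
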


\begin{proof}
By Lemma \ref{all_elements}, we may assume that $f(t)=t+\lambda t^{z_{m-1}}+\mu t^{z_{m-1}+1}$.
According to the definition of the matrix $M$, we have
\[
f(t^i) = t^i + \sum_{j\ge 1} \, M_{i,i+j} t^{i+j}.
\]
By expanding the $i$th power in
\[
f(t^i) = f(t)^i = (t+t^{z_{m-1}}(\lambda+\mu t))^i,
\]
one readily obtains the following values of $M_{i,i+j}$ for $1\le j\le z_{m-1}+1$: if $m>1$ then we have
\begin{equation}
\label{M for m>1}
M_{i,i+j}
=
\begin{cases}
\lambda i,
&
\text{if $j=z_{m-1}-1$,}
\\
\mu i,
&
\text{if $j=z_{m-1}$,}
\\
0,
&
\text{if $1\le j<z_{m-1}-1$ or $j=z_{m-1}+1$,}
\end{cases}
\end{equation}
and if $m=1$ then
\begin{equation}
\label{M for m=1}
M_{i,i+j}
=
\begin{cases}
\lambda i,
&
\text{if $j=1$,}
\\[3pt]
\lambda^2 \binom{i}{2} + \mu i,
&
\text{if $j=2$,}
\\[3pt]
2\lambda\mu \binom{i}{2} + \lambda^3 \binom{i}{3},
&
\text{if $j=3$.}
\end{cases}
\end{equation}

Let us first assume that $m>1$.
We start by calculating the coefficient $\alpha$ of $t^{z_m}$ in $f^p(t)$.
To this purpose, we rely on formula (\ref{coefficient fpm}), applied with $n=z_m$.
By (\ref{M for m>1}), if the sum in (\ref{coefficient fpm}) corresponding to a vector $\mathbf{i}=(i_0,\ldots,i_p)$
is non-zero, we must have $i_{j+1}\ge i_j+z_{m-1}-1$ for every $j=0,\ldots,p-1$.
Thus $i_j\ge j(z_{m-1}-1)+1$ for $j=0,\ldots,p$.
Since $i_p=p(z_{m-1}-1)+2$, for some $k\in\{1,\ldots,p\}$ we must have $i_j=j(z_{m-1}-1)+1$ for $j=0,\ldots,k-1$
and $i_j=j(z_{m-1}-1)+2$ for $j=k,\ldots,p$.
Let us write, for simplicity, $q_j=j(z_{m-1}-1)+1$.
Then
\[
\alpha = \sum_{k=1}^p \, \alpha_k,
\]
where
\[
\alpha_k
=
\Big( \prod_{i=1}^{k-1} M_{q_{i-1},q_i} \Big)
\,
M_{q_{k-1},q_k+1}
\,
\Big( \hspace{-2pt} \prod_{i=k+1}^p M_{q_{i-1}+1,q_i+1} \Big).
\]
Now, by (\ref{M for m>1}),
\begin{gather*}
M_{q_{j-1},q_j} = \lambda q_{j-1} = \lambda j,
\quad
\text{for $j=1,\ldots,k-1$,}
\\[5pt]
M_{q_{k-1},q_k+1} = \mu q_{k-1} = \mu k,
\\
\intertext{and}
M_{q_{j-1}+1,q_j+1} = \lambda (q_{j-1}+1) = \lambda (j+1),
\quad
\text{for $j=k+1,\ldots,p$.}
\end{gather*}
Consequently
\[
\alpha_k = \lambda^{p-1} \mu \, k! (k+2)\ldots (p+1),
\]
and since this product contains the factor $p$ unless $k=p-1$, we finally get
\[
\alpha = \lambda^{p-1} \mu (p-1)! = -\lambda^{p-1} \mu,
\]
as desired.

The coefficient $\beta$ of $t^{z_m+1}$ in $f^p(t)$ can be obtained in a similar way.
Again, $\beta$ is a product of factors of the form $M_{i_{j-1},i_j}$, where $i_j-i_{j-1}=z_{m-1}-1$ except for two values
$k$ and $\ell$ for which $i_k-i_{k-1}=i_{\ell}-i_{\ell-1}=z_{m-1}$, or one value $r$ for which
$i_r-i_{r-1}=z_{m-1}+1$.
The latter case gives a zero product, since $M_{i,i+z_{m-1}+1}=0$ by (\ref{M for m>1}), and consequently
\[
\beta
=
\sum_{1\le k<\ell \le p}
\, \beta_{k,\ell},
\]
where
\begin{multline*}
\beta_{k,l}
=
\Big( \prod_{i=1}^{k-1} M_{q_{i-1},q_i} \Big)
\,
M_{q_{k-1},q_k+1}
\,
\Big( \hspace{-2pt} \prod_{i=k+1}^{\ell-1} M_{q_{i-1}+1,q_i+1} \Big)
\\
M_{q_{\ell-1}+1,q_\ell+2}
\,
\Big( \hspace{-2pt} \prod_{i=\ell+1}^p M_{q_{i-1}+2,q_i+2} \Big).
\end{multline*}
By (\ref{M for m>1}), we have
\[
\beta_{k,\ell} = \lambda^{p-2}\mu^2 \prod_{\substack{i=1\\ i\ne k+1,\,\ell+2}}^{p+2} \, i,
\]
which is $0$ unless $k=p-1$ and $\ell=p$, or $1\le k\le p-3$ and $\ell=p-2$.
Consequently
\[
\beta
=
\lambda^{p-2}\mu^2 \Big( (p-1)! (p+1) + \sum_{k=1}^{p-3} \, \frac{(p-1)! (p+1)(p+2)}{k+1} \Big)
=
-\lambda^{p-2}\mu^2,
\]
where the last equality follows from the fact that
\[
\sum_{i=1}^{p-1} \, \frac{(p-1)!}{i} \equiv 0 \pmod p
\]
for odd $p$, since it coincides with the sum of the inverses of all elements of $\F_p^{\times}$.
This completes the proof when $m>1$.

Finally, the case $m=1$ can be dealt with in a similar way, and the details are left to the reader.
\end{proof}

\begin{cor}
\label{p^mth_powers}
For every $\lambda\in\F_p$ and $m\ge 1$, we have
\[
f_{\lambda}^{p^m}(t) \equiv t+ (1-\lambda)^m t^{z_m}- (1- \lambda)^{m+1}t^{z_m+1} \pmod{t^{z_m+2}}.
\]
\end{cor}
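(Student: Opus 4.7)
The plan is to prove the corollary by induction on $m$, feeding the previous lemma as the inductive engine. Writing $f_\lambda(t) = t + 1\cdot t^{z_0} + \lambda\, t^{z_0+1}$, the base case $m=1$ is an immediate application of the $m=1$ branch of the previous lemma with (using the lemma's local letters) $\lambda_{\text{lem}} = 1$ and $\mu_{\text{lem}} = \lambda$. This yields
\[
f_\lambda^p(t) \equiv t + 1^{p-1}(1-\lambda)\,t^{z_1} - 1^{p-2}(1-\lambda)^2 t^{z_1+1} \pmod{t^{z_1+2}},
\]
which is exactly the desired formula for $m=1$.

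For the inductive step, assume the result holds for $m-1\ge 1$, so
\[
f_\lambda^{p^{m-1}}(t) \equiv t + A\, t^{z_{m-1}} + B\, t^{z_{m-1}+1} \pmod{t^{z_{m-1}+2}}
\]
with $A = (1-\lambda)^{m-1}$ and $B = -(1-\lambda)^m$. Apply the $m>1$ branch of the previous lemma to $f_\lambda^{p^{m-1}}$ to obtain
\[
f_\lambda^{p^m}(t) \equiv t - A^{p-1}B\, t^{z_m} - A^{p-2}B^2 t^{z_m+1} \pmod{t^{z_m+2}}.
\]
Now the key (and only really nontrivial) observation is the use of Fermat's little theorem in $\F_p$: whenever $\lambda \ne 1$, the scalar $1-\lambda \in \F_p^\times$ satisfies $(1-\lambda)^{p-1} = 1$, hence $A^{p-1} = 1$ and $A^{p-2} = (1-\lambda)^{-(m-1)}$. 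Substituting and simplifying, the coefficient of $t^{z_m}$ becomes $-(1)(-(1-\lambda)^m) = (1-\lambda)^m$, and the coefficient of $t^{z_m+1}$ becomes $-(1-\lambda)^{-(m-1)}(1-\lambda)^{2m} = -(1-\lambda)^{m+1}$, as required.

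The exceptional case $\lambda = 1$ is handled separately, but it is immediate: the base case gives $f_1^p(t) \equiv t \pmod{t^{z_1+2}}$, i.e.\ $f_1^p \in \No_{z_1+1}$, so by iterating and applying (\ref{powers Nzm}) we get $f_1^{p^m} \in \No_{z_1+1}^{p^{m-1}} \subseteq \No_{z_m+1}$. This means $f_1^{p^m}(t) \equiv t \pmod{t^{z_m+2}}$, which agrees with the claimed formula since $(1-1)^m = 0$. The main potential obstacle, the appearance of the awkward exponents $(m-1)(p-1)$ and $(m-1)(p-2)$ in $A^{p-1}$ and $A^{p-2}$, is neatly absorbed by Fermat's little theorem, so the induction collapses into a one-line computation in $\F_p$.
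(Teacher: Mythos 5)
Your proof is correct and is precisely the argument the paper intends: the corollary is stated without proof as an immediate consequence of the preceding lemma, obtained by exactly the induction you describe (base case from the $m=1$ branch, inductive step from the $m>1$ branch, with Fermat's little theorem absorbing the exponents $(m-1)(p-1)$ and $(m-1)(p-2)$ in $\F_p$). Your separate treatment of $\lambda=1$ is a careful touch, though the induction in fact also covers it since all coefficients vanish there.
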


On several occasions, we will find ourselves working in a quotient of the form
$\No/\No_{z_m+1}$, which for simplicity we will call $G$.
In that case, we will systematically write $N_k$ for $\No_k/\No_{z_m+1}$, for
$1\le k\le z_m+1$, and $M_{\lambda}$ instead of $\MM_{\lambda}/\No_{z_m+1}$,
for every $\lambda\in\F_p$.
We will use this notation without further reference.

\begin{cor}
\label{pmth powers}
If $G=\No/\No_{z_m+1}$, then for $\lambda\in\F_p$, $\lambda\ne 1$, the power subgroups $M_{\lambda}^{p^m}$ are all different and of order $p$, contained in $N_{z_m-1}$.
In particular, all elements of $M_{\lambda}\smallsetminus G'$ are of order $p^{m+1}$ for $\lambda\ne 1$.
\end{cor}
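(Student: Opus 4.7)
The plan is to reduce the computation of each $M_\lambda^{p^m}$ to a single element, namely $f_\lambda^{p^m}$, whose expansion is already given by Corollary~\ref{p^mth_powers}. Since $M_\lambda=\langle f_\lambda\rangle N_3$ and $M_\lambda/N_3$ is cyclic of order $p$, every element of $M_\lambda$ can be written in $G$ as $f_\lambda^i g$ with $i\in\{0,\ldots,p-1\}$ and $g\in N_3$, and it lies outside $G'=N_3$ precisely when $i$ is a unit modulo $p$. By \eqref{powers Nzm} we have $\No_3^{p^m}=\No_{z_0+1}^{p^m}\subseteq \No_{z_m+1}$, so the contribution from $g$ vanishes in $G$; and Lemma~\ref{all_elements}, applied with $k=0$ and $\ell=m$, yields
\[
(f_\lambda^i g)^{p^m} \equiv (f_\lambda^i)^{p^m} = (f_\lambda^{p^m})^i \pmod{\No_{z_m+1}}.
\]
Consequently $M_\lambda^{p^m}=\langle f_\lambda^{p^m}\rangle$ in $G$.

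Next I would verify that $\langle f_\lambda^{p^m}\rangle$ has order $p$ and is contained in $N_{z_m-1}$. The formula in Corollary~\ref{p^mth_powers} shows that the coefficient of $t^{z_m}$ in $f_\lambda^{p^m}(t)$ is $(1-\lambda)^m$, which is non-zero because $\lambda\ne 1$; hence $f_\lambda^{p^m}$ represents a non-trivial element of $N_{z_m-1}$. To see that its order is exactly $p$ I would show that $N_{z_m-1}$ is elementary abelian of order $p^2$: the commutator formula \eqref{comms Ni} gives $[\No_{z_m-1},\No_{z_m-1}]\subseteq \No_{2z_m-1}\subseteq \No_{z_m+1}$, while the power formula \eqref{powers Ni}, together with the congruence $z_m-1\equiv 1\pmod p$, yields $\No_{z_m-1}^p=\No_{p(z_m-1)+1}=\No_{z_{m+1}-1}\subseteq \No_{z_m+1}$. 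Since $|N_{z_m-1}|=p^2$, the claim follows.

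To distinguish the subgroups for different $\lambda\ne 1$, I would use the coordinates on $N_{z_m-1}\cong \F_p^2$ coming from the coefficients of $t^{z_m}$ and $t^{z_m+1}$: in these coordinates $f_\lambda^{p^m}$ becomes $((1-\lambda)^m,\,-(1-\lambda)^{m+1})$. The vectors corresponding to $\lambda$ and $\mu$ generate the same line if and only if
\[
(1-\lambda)^m(1-\mu)^{m+1} = (1-\mu)^m(1-\lambda)^{m+1},
\]
which, after cancelling the non-zero factor $(1-\lambda)^m(1-\mu)^m$, reduces to $\lambda=\mu$. This shows that the $p-1$ subgroups $M_\lambda^{p^m}$ with $\lambda\ne 1$ are pairwise distinct. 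Finally, the ``in particular'' assertion is immediate: for $h\in M_\lambda\smallsetminus G'$ the index $i$ above is a unit modulo $p$, so $h^{p^m}=(f_\lambda^{p^m})^i$ has order $p$, and hence $h$ has order exactly $p^{m+1}$.

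The only delicate point is the bookkeeping in the first paragraph: one has to check that the hypotheses of Lemma~\ref{all_elements} hold with $k=0$ and $\ell=m$ and that $\No_3^{p^m}\subseteq \No_{z_m+1}$, but both facts follow directly from the indexing conventions and \eqref{powers Nzm}. The substance of the proof then lies in the elementary-abelian analysis of the diamond $N_{z_m-1}$ and the linear-algebra check in $\F_p^2$, both of which are routine once Corollary~\ref{p^mth_powers} is in hand.
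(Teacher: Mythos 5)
Your proof is correct and follows the same route the paper intends for this corollary: reduce each $M_\lambda^{p^m}$ to $\langle f_\lambda^{p^m}\rangle$ via Lemma \ref{all_elements}, read off the element from Corollary \ref{p^mth_powers}, and observe that the resulting vectors $((1-\lambda)^m,-(1-\lambda)^{m+1})$ in the elementary abelian diamond $N_{z_m-1}$ span pairwise distinct lines for distinct $\lambda\ne 1$. The paper leaves these steps implicit, and your write-up supplies exactly the missing bookkeeping.
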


The following result, which gives a sufficient condition to lift a Beauville structure from a quotient group, is Lemma 4.2 in \cite{FJ}.

\begin{lem}
\label{lifting structure}
Let $G$ be a finite group and let $\{x_1,y_1\}$ and $\{x_2,y_2\}$ be two sets of generators of $G$.
Assume that, for a given $N\trianglelefteq G$, the following hold:
\begin{enumerate}
\item
$\{x_1N,y_1N\}$ and $\{x_2N,y_2N\}$ is a Beauville structure for $G/N$.
\item
$o(g)=o(gN)$ for every $g\in\{x_1,y_1,x_1y_1\}$.
\end{enumerate}
Then $\{x_1,y_1\}$ and $\{x_2,y_2\}$ is a Beauville structure for $G$.
\end{lem}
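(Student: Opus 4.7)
The plan is to show directly that $\Sigma(x_1,y_1)\cap\Sigma(x_2,y_2)=1$ by a double-containment trick: first push down to $G/N$ to locate any element of the intersection inside $N$, then use (ii) to show this forces triviality. Note that the hypotheses are asymmetric: (ii) is only assumed for the first triple $\{x_1,y_1,x_1y_1\}$, which turns out to be enough.

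Concretely, let $\pi\colon G\to G/N$ denote the natural projection, and pick an arbitrary $z\in\Sigma(x_1,y_1)\cap\Sigma(x_2,y_2)$. By definition there exist $u_i\in\{x_i,y_i,x_iy_i\}$ and $g_i\in G$ with $z\in\langle u_i\rangle^{g_i}$ for $i=1,2$. Since $\pi$ is a homomorphism and commutes with conjugation, $\pi(z)$ lies in $\langle\pi(u_1)\rangle^{\pi(g_1)}\cap\langle\pi(u_2)\rangle^{\pi(g_2)}$, hence in $\Sigma(x_1N,y_1N)\cap\Sigma(x_2N,y_2N)$. By (i) this intersection is trivial, so $\pi(z)=1$, i.e.\ $z\in N$.

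Now I would exploit (ii). The natural surjection $\langle u_1\rangle\twoheadrightarrow\langle u_1N\rangle$ has kernel $\langle u_1\rangle\cap N$, and the orders of source and target agree by (ii); therefore $\langle u_1\rangle\cap N=1$. Since $N\trianglelefteq G$, conjugation by $g_1$ yields $\langle u_1\rangle^{g_1}\cap N=(\langle u_1\rangle\cap N)^{g_1}=1$. Combining this with $z\in\langle u_1\rangle^{g_1}\cap N$ gives $z=1$, so the intersection $\Sigma(x_1,y_1)\cap\Sigma(x_2,y_2)$ is indeed trivial. Together with the fact that $\{x_1,y_1\}$ and $\{x_2,y_2\}$ generate $G$ (both lift generating sets of $G/N$), this shows the pair is a Beauville structure for $G$.

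There is no real obstacle here: the only subtle point is the asymmetric role of condition (ii), namely that once the image of $z$ in $G/N$ is shown to be trivial, we need the no-collapse condition only for \emph{one} of the two triples in order to force $z=1$.
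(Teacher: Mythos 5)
Your proof is correct. The paper itself gives no argument for this lemma (it simply cites it as Lemma 4.2 of Fuertes--Jones \cite{FJ}), and your reasoning is the standard one: project an element of $\Sigma(x_1,y_1)\cap\Sigma(x_2,y_2)$ into $G/N$ to force it into $N$, then use the order-preservation hypothesis to get $\langle u_1\rangle\cap N=1$, hence $\langle u_1\rangle^{g_1}\cap N=1$ by normality of $N$. Your observation that condition (ii) is only needed for one of the two triples is also accurate and is exactly why the lemma is stated asymmetrically.
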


We can now begin to determine which quotients of the Nottingham group are Beauville groups.
Firstly, we consider quotients of the form $\No/\No_k$.
We deal separately with the cases $p>3$ and $p=3$.

\begin{thm}
\label{nott p>3}
If $p\ge 5$ then a quotient $\No/\No_k$ is a Beauville group if and only if $k\ge 3$ and $k\ne z_m$ for all $m\ge 1$. 
\end{thm}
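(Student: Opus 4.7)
The plan is to prove both directions of the equivalence.

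For the negative direction, $k\le 2$ is immediate since $\No/\No_k$ is cyclic of order at most $p$. For $k=z_m$ with $m\ge 1$, I will apply Proposition \ref{negative} to $G=\No/\No_{z_m}$. Using Corollary \ref{p^mth_powers}, which places $f_\lambda^{p^m}$ in $\No_{z_m-1}\setminus\No_{z_m}$ when $\lambda\ne 1$ and in $\No_{z_m+1}$ when $\lambda=1$, together with Lemma \ref{all_elements} at $k=0$ to reduce arbitrary elements of $f_\lambda\No'$ or $b\No'$, I plan to show that $\exp G=p^{m+1}$, that every element of $M_\lambda\smallsetminus G'$ with $\lambda\ne 1$ has order exactly $p^{m+1}$, and that every element of $M_1\cup N_2$ has order at most $p^m$ (via the filtration bounds $\No_3^{p^m}\subseteq\No_{z_m+1}$ and $\No_2^{p^m}\subseteq\No_{2z_m-2}\subseteq\No_{z_m}$, which also kill the $G'$-contributions). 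Hence $\Omega_{\{m\}}(G)=M_1\cup N_2$, a union of two maximal subgroups, and the same reductions show that $G^{p^m}$ is generated by the images of $f_\lambda^{p^m}$ for $\lambda\ne 1$, all of which lie in the cyclic subgroup $N_{z_m-1}$ of order $p$. Proposition \ref{negative} then rules out any Beauville structure.

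For the positive direction with $k\ge 3$ and $k\ne z_m$, I will split according to whether $k\le p+1$ or $k\ge p+3$. In the first case $\No/\No_k$ has nilpotency class less than $p$ and is therefore regular; the same filtration bounds give $\exp(\No/\No_k)=p$, so $|G^{p^{e-1}}|=|G|\ge p^2$ and Corollary \ref{cor:thm A for all good families} immediately produces a Beauville structure. In the second case I take the unique $m\ge 1$ with $z_m+1\le k\le z_{m+1}-1$; my plan is to build a Beauville structure in $G_0:=\No/\No_{z_m+1}$ and lift it to $\No/\No_k$ via Lemma \ref{lifting structure}. Corollary \ref{pmth powers} supplies $p-1$ pairwise distinct subgroups $M_\lambda^{p^m}$ of order $p$ inside the distinguished diamond $N_{z_m-1}$, each central because normal of order $p$ in a $p$-group. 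A short computation in $\No/\No_3$ yields $f_{\lambda_1}f_{\lambda_2}\in M_{(\lambda_1+\lambda_2)/2}$, so a pair $\{f_{\lambda_1},f_{\lambda_2}\}$ of generators produces three $p^m$-th power subgroups indexed by $\lambda_1,\lambda_2,(\lambda_1+\lambda_2)/2$; centrality implies that a nontrivial $\Sigma$-intersection between two such pairs would force two of the six combined indices to coincide. For $p\ge 7$ this is avoided by a direct count in $\F_p\smallsetminus\{1\}$; for $p=5$ I will instead substitute one $f_{\lambda_i}$ by $a\in M_1$ or $b\in N_2$, whose $p^m$-th power vanishes, and use the fact that $a$ and $b$ have filtration depth less than $z_m-1$ to argue that the non-central cyclic subgroups $\langle a\rangle$ and $\langle b\rangle$ (and their conjugates) cannot intersect any central $M_\mu^{p^m}$. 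Finally, Lemma \ref{lifting structure} applies because for every generator $g$ used, $g^{p^{m+1}}$ lies in $\No_{z_{m+1}-1}\subseteq\No_k$ by iterated use of Corollary \ref{p^mth_powers} and Lemma \ref{all_elements}, so orders are preserved.

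The main obstacle will be the explicit construction in $G_0$ when $p=5$, where the four available central power subgroups $M_\lambda^{p^m}$ are barely enough; the argument requires a careful mix of generators from several maximal subgroups together with a separate check that conjugates of the non-central cyclic subgroups $\langle a\rangle$ or $\langle b\rangle$ do not overlap the central power subgroups attached to the other pair.
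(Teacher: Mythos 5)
Your negative direction ($k=z_m$) and the range $3\le k\le p+1$ are sound and essentially coincide with the paper's treatment: the paper likewise verifies conditions (i) and (ii) of Proposition \ref{negative} for $\No/\No_{z_m}$ by showing that the elements of order at most $p^m$ fill exactly the two maximal subgroups coming from $\MM_1$ and $\No_2$ while $\No^{p^m}=\No_{z_m-1}$ has image of order $p$, and it handles small $k$ by lifting from $\No/\No_3\cong C_p\times C_p$ via Lemma \ref{lifting structure} (your detour through regularity and Corollary \ref{cor:thm A for all good families} is an acceptable variant). For $z_m+1\le k\le z_{m+1}-1$ your reduction to $G_0=\No/\No_{z_m+1}$ and the lifting step also match the paper, and your separation mechanism --- elements of $M_\lambda\smallsetminus G_0'$ with $\lambda\ne 1$ have order $p^{m+1}$ and their unique order-$p$ subgroups are the pairwise distinct normal, hence central, subgroups $M_\lambda^{p^m}$ of Corollary \ref{pmth powers} --- is exactly the paper's. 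Your computation $f_{\lambda_1}f_{\lambda_2}\in\MM_{(\lambda_1+\lambda_2)/2}$ is correct, and for $p\ge 7$ a configuration of six distinct indices in $\F_p\smallsetminus\{1\}$ does exist (e.g.\ $\{2,4;3\}$ and $\{0,5;6\}$ for $p=7$), though a bare count does not produce one; you must exhibit it or give a genericity argument.

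The genuine gap is the case $p=5$. Replacing \emph{one} of the four generators $f_{\lambda_i}$ by an element of order $p$ still leaves five elements of order $p^{m+1}$ (three in one triple, two in the other), and cross-pair disjointness forces their five indices to be pairwise distinct and to avoid $1$ --- impossible in $\F_5\smallsetminus\{1\}$, which has only four elements. (Indices within one triple need not be separated from each other, but three plus two is still five.) The repair, which is what the paper does, is to put \emph{both} order-$p$ generators into the \emph{same} pair: with $u,v$ the images of the order-$p$ elements $a\in\MM_1\smallsetminus\No'$ and $b\in\No_2\smallsetminus\No'$, the paper takes $\{u,v\}$ and $\{uv^2,uv^4\}$, so the first triple contributes only one element of order $p^{m+1}$ (namely $uv$) and the second contributes three; the six elements occupy six distinct maximal subgroups (six distinct points of $\P^1(\F_5)$, which has exactly six), so the four relevant indices are automatically distinct and different from $1$, and the order-$p$ elements are handled by the Frattini-quotient argument you already have. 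This single construction works uniformly for all $p\ge 5$ and removes your case split; you should either adopt it or redesign your $p=5$ pairs so that no generating pair carries more order-$p^{m+1}$ elements than $\F_5\smallsetminus\{1\}$ can accommodate.
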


\begin{proof}
First of all, we show that $G=\No/\No_{z_m+1}$ is a Beauville group for all $m\ge 1$.
Let $u$ and $v$ be the images in $G$ of the automorphisms $a$ and $b$ which were defined after Lemma \ref{all_elements}.
Then $\{u,v\}$ and $\{uv^2,uv^4\}$ are both systems of generators of $G$, and we claim that they yield a Beauville structure for $G$.
If $X=\{u,v,uv\}$ and $Y=\{uv^2,uv^4,uv^2uv^4\}$, we have to see that
\begin{equation}
\label{check}
\langle x^g \rangle \cap \langle y^h \rangle=1
\end{equation}
for all $x\in X$, $y\in Y$, and $g,h\in G$.
Observe that $\langle x\Phi(G) \rangle$ and $\langle y\Phi(G) \rangle$ have trivial intersection for every $x\in X$ and
$y\in Y$, since $a$ and $b$ are linearly independent modulo $\Phi(G)$.
As a consequence, $x^g$ and $y^h$ lie in different maximal subgroups of $G$ in every case.

Assume first that $x=u$ or $v$, which are elements of order $p$.
If (\ref{check}) does not hold, then $\langle x^g \rangle \subseteq \langle y^h \rangle$, and consequently $\langle x\Phi(G) \rangle=\langle y\Phi(G) \rangle$, which is a contradiction.
Thus we assume that $x=uv$.
Now, $uv$ and all elements $y\in Y$ lie in $M_{\lambda}\smallsetminus G'$ for some
$\lambda\in\F_p$, $\lambda\ne 1$, and so they are all of order $p^{m+1}$, by Corollary \ref{pmth powers}.
If (\ref{check}) does not hold, then
\[
\langle (x^g)^{p^m} \rangle = \langle (y^h)^{p^m} \rangle
\]
and, again by Corollary \ref{pmth powers}, $x^g,y^h\in M_{\lambda}$ for some
$\lambda$.
This is a contradiction, and we thus complete the proof that $G$ is a Beauville group.

Let us now consider a quotient $\No/\No_k$ with $k\ge 3$ and $k\ne z_m$ for all
$m\ge 1$.
Then either $3\le k\le p+1$ or $z_m+1\le k\le z_{m+1}-1$ for some $m\ge 1$.
In the former case, $\No/\No_3\cong C_p\times C_p$ is a Beauville group, since
$p\ge 5$, and $\exp \No/\No_k=\exp \No/\No_3=p$.
Thus $\No/\No_k$ is a Beauville group by Lemma \ref{lifting structure}.
In the latter case, $\No/\No_{z_m+1}$ has the Beauville structure shown in the previous paragraph, whose first set of generators is $\{a\No_{z_m+1},b\No_{z_m+1}\}$.
Since $o(a \No_k)=o(b \No_k)=p$ and $o(ab \No_k)=p^{m+1}$, since
$(ab)^{p^{m+1}}\in \No_{z_{m+1}-1}$ by Corollary \ref{pmth powers}, we can again apply Lemma \ref{lifting structure}, and $\No/\No_k$ is a Beauville group also in this case.

Let us finally see that $\No/\No_{z_m}$ is not a Beauville group for $m\ge 1$.
By Lemma \ref{all_elements} and Corollary \ref{p^mth_powers}, we know that all elements in $\MM_{\lambda}/\No_{z_m}\smallsetminus \No'/\No_{z_m}$ are of order $p^{m+1}$ for $\lambda\ne 1$.
Since $\exp \No/\No_{z_m}=p^{m+1}$, it follows that condition (i) of Proposition \ref{negative} is fulfilled.
On the other hand, by (\ref{powers Ni}) we have $\No^{p^m}\le \No_{z_m-1}$, and so
$(\No/\No_{z_m})^{p^m}$ has order $p$.
Thus also condition (ii) of Proposition \ref{negative} holds, and we conclude that
$\No/\No_{z_m}$ is not a Beauville group.
\end{proof}

In order to deal with the prime $3$, we need two more lemmas.

\begin{lem}
\label{intersection_2}
Let $G$ be a finite $p$-group and let $x \in G \setminus \Phi(G)$ be an element of order $p$.
If $t \in \Phi(G)\smallsetminus \{[x,g] \mid g \in G \}$ then 
\[
\Big(\bigcup_{g\in G} {\langle x\rangle}^g  \Big)
\bigcap
\Big(\bigcup_{g\in G} {\langle xt\rangle}^g \Big)= 1.
\]
\end{lem}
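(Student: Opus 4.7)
The plan is to argue by contradiction: assume there exists a non-trivial element $z\in \langle x\rangle^g\cap\langle xt\rangle^h$ and derive that $xt$ is conjugate to $x$, which translates to $t\in\{[x,g]\mid g\in G\}$. The first observation to record is the translation: $xt=x^u$ holds for some $u\in G$ if and only if $t=[x,u]$, so the hypothesis on $t$ amounts to saying that $xt\notin x^G$.

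With that reformulation in hand, I would write $z=(x^i)^g$ with $i\in\{1,\dots,p-1\}$, so that $z$ has order $p$. The key preliminary step is to rule out the possibility that $xt$ has order $p^e$ with $e\geq 2$. If it did, then since $z$ is a power of $(xt)^h$ of order $p$, we would need to take the $p^{e-1}$-power, forcing
\[
z=\bigl((xt)^{p^{e-1}}\bigr)^{hk}
\]
for some $k$ coprime to $p$. But $(xt)^{p^{e-1}}\in G^p\le \Phi(G)$ when $e\geq 2$, and since $\Phi(G)$ is normal, $z\in\Phi(G)$; this contradicts $z=(x^i)^g\notin\Phi(G)$, because $x\notin\Phi(G)$. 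So $xt$ must have order exactly $p$.

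Once $o(xt)=p$, I can write $z=((xt)^j)^h$ with $j\in\{1,\dots,p-1\}$, and the equality $(x^i)^g=((xt)^j)^h$ rearranges to $(x^i)^{u}=(xt)^j$ with $u=gh^{-1}$. Reducing modulo $\Phi(G)$ (where $\bar t=\bar 1$ and conjugation acts trivially), this yields $\bar x^i=\bar x^j$ in $G/\Phi(G)$, and since $\bar x$ has order $p$, we get $i\equiv j\pmod p$; using $o(xt)=p$, we may replace $j$ by $i$. Hence $(x^u)^i=(xt)^i$, and raising to the inverse of $i$ modulo $p$ (both $x^u$ and $xt$ have order $p$, so $m$-th powers are genuine bijections on each cyclic subgroup) gives $x^u=xt$. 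This exhibits $xt$ as a conjugate of $x$, i.e.\ $t=[x,u]$, the desired contradiction.

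The only slightly delicate point is the last cancellation: from $(x^u)^i=(xt)^i$ one cannot directly pass to $x^u=xt$ because $x^u$ and $xt$ need not commute, but the argument goes through by raising the identity $a^i=b^i$ (with $a,b$ of order $p$ and $i$ coprime to $p$) to the $i^{-1}$-th power inside the ambient group, since $(a^i)^{i^{-1}}=a^{ii^{-1}}=a$ purely inside $\langle a\rangle$, and similarly for $b$. The main obstacle I anticipate is simply bookkeeping of the exponents to keep this passage clean.
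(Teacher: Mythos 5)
Your argument is correct and follows essentially the same route as the paper's proof: reduce the equality of conjugate powers modulo $\Phi(G)$ to align the exponents, use that $x$ has order $p$ to deduce $x^u=xt$ for some $u$, and conclude $t=[x,u]$, contradicting the hypothesis. The only difference is presentational — you rule out $o(xt)>p$ explicitly via the Frattini quotient, whereas the paper absorbs this into the observation that $a^j=b^j$ with $o(a)=p$ and $p\nmid j$ forces $o(b)=p$ and $a=b$.
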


\begin{proof}
We assume that $h= (x^i)^{g_1}=((xt)^j)^{g_2}$ for some $i,j \in \mathbb{Z}$ and $g_1, g_2 \in G$, and prove that
$h=1$.
Since $t\in\Phi(G)$, we have $x^i\Phi(G)=x^j\Phi(G)$, and so $i\equiv j \pmod{p}$.
Then $h=(x^{g_1})^j=((xt)^{g_2})^j$, since $x$ is of order $p$.
If $p\mid j$ then we are done.
If $p\nmid j$ then, since $G$ is a $p$-group, $x^{g_1}=(xt)^{g_2}$, and consequently $t= [x,g_1{g_2}^{-1} ]$, which is a contradiction.
\end{proof}

In the following lemma, we need a result of Klopsch \cite[formula (3.4)]{klo} regarding the centralizers of elements of order $p$ of the Nottingham group in some quotients
$\No/\No_k$.
More specifically, if $f\in \No_k\smallsetminus \No_{k+1}$ then for every
$\ell=k+1+pn$ with $n\in\N$, we have
\begin{equation}
\label{centralizer}
C_{\No/\No_{\ell}}(f\No_{\ell}) = C_{\No}(f)\No_{\ell-k}/\No_{\ell}.
\end{equation}

\begin{lem}
\label{not covered}
Let $p=3$ and $m\ge 1$, and put $G=\No/\No_{z_m+1}$ and
$N_k=\No_k/\No_{z_m+1}$ for all $k\ge 1$.
If $u$ and $v$ are the images of $a$ and $b$ in $G$, respectively, then the sets
$\{[u,g]\mid g\in G\}$ and $\{[v,g]\mid g\in G\}$ do not cover $N_{z_m-1}$.
\end{lem}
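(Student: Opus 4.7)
The plan is to show that each of the sets $\{[u,g] : g \in G\} \cap N_{z_m-1}$ and $\{[v,g] : g \in G\} \cap N_{z_m-1}$ is a subgroup of the distinguished diamond $N_{z_m-1}$ having order at most $p$. Since $|N_{z_m-1}|=p^2=9$, the union of two such subgroups covers at most $2p-1=5$ elements, leaving at least $(p-1)^2=4$ elements of $N_{z_m-1}$ uncovered, which is the content of the lemma.

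First I would establish that $N_{z_m-1}$ is central in $G$. Since $z_m \equiv 2 \pmod{3}$, the congruence $z_m - 1 \equiv 1 \pmod{3}$ puts the pair $(k,\ell)=(z_m-1,1)$ into the second case of (\ref{comms Ni}), so $[\No_{z_m-1}, \No_1] \subseteq \No_{z_m+1}$. As $\No = \No_1$, this gives $[N_{z_m-1},G]=1$ in $G$. Granted this centrality, for $x\in\{u,v\}$ the set $C_x := \{g \in G : [x,g]\in N_{z_m-1}\}$ is a subgroup of $G$, and the identity $[x,g_1 g_2] = [x,g_1]^{g_2}[x,g_2]$ combined with $[x,g_1]\in Z(G)$ gives $[x,g_1 g_2] = [x,g_1][x,g_2]$. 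Hence $g\mapsto [x,g]$ is a homomorphism $C_x \to N_{z_m-1}$ whose image is exactly $\{[x,g]:g\in G\}\cap N_{z_m-1}$; in particular this intersection is a subgroup of $N_{z_m-1}$.

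The central step is to bound these two images by $p$. For $v$: since $2 \equiv z_m-3 \equiv 2 \pmod{3}$, formula (\ref{comms Ni}) again falls in the second case, giving $[\No_2,\No_{z_m-3}]\subseteq\No_{z_m}$. Combined with (\ref{centralizer}) applied at the admissible value $\ell=z_m-2$, which identifies $\{g\in\No:[b,g]\in\No_{z_m-2}\} = C_\No(b)\No_{z_m-4}$, a leading-coefficient calculation then shows that any $g$ with $[v,g]\in N_{z_m-1}$ must in fact lie in $C_\No(b)\No_{z_m-3}$, whence $[v,g]\in N_{z_m}$. Thus $\{[v,g]:g\in G\}\cap N_{z_m-1}\subseteq N_{z_m}$, a group of order $p$. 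For $u$: since $1\equiv z_m-1\pmod{3}$, formula (\ref{comms Ni}) gives $[\No_1,\No_{z_m-1}]\subseteq\No_{z_m+1}$, so $[u,g]$ depends only on $g$ modulo $\No_{z_m-1}$. Using (\ref{centralizer}) at $\ell=z_m$ (which identifies $\{g:[a,g]\in\No_{z_m}\}=C_\No(a)\No_{z_m-1}$) together with an analysis of commutator depths, one shows that the $g$'s with $[u,g]\in N_{z_m-1}$ are parametrised modulo $\No_{z_m-1}$, up to the trivial contribution of $C_\No(a)$, by a subquotient of $\No_{z_m-2}/\No_{z_m-1}$, which has order $p$. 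Hence this image has order at most $p$ as well.

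The two subgroups thus have order at most $p$ inside the order-$p^2$ group $N_{z_m-1}$, so their union has at most $2p-1<p^2$ elements, proving the lemma. The main obstacle will be the precise depth analysis in the previous paragraph, in particular ruling out contributions from elements $g\in\No_k$ at depths $k<z_m-2$ (for $u$) or $k<z_m-3$ (for $v$) outside the claimed cosets of $C_\No(a)$ or $C_\No(b)$. This requires iterated use of (\ref{centralizer}) at successively smaller admissible values of $\ell$, combined with the leading-coefficient formula for commutators implicit in the proof of Lemma~\ref{all_elements}, to exclude all generic $g$'s at intermediate depths and so pin down the commutator preimages of $N_{z_m-1}$.
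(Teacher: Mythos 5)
Your overall strategy is close to the paper's: both proofs rest on the centrality of $N_{z_m-1}$ in $G$, on Klopsch's centralizer formula (\ref{centralizer}) at admissible levels $\ell\equiv k+1\pmod p$, and on the fact that $[f,g]$ has exact depth $k+\ell$ when the depths $k,\ell$ of $f,g$ are incongruent mod $p$. Your observation that $\{[x,g]:g\in G\}\cap N_{z_m-1}$ is a subgroup is a nice addition the paper does not need, and your treatment of $v$ is essentially identical to the paper's: from $[b,g]\in\No_{z_m-2}$ one gets $g\in C_\No(b)\No_{z_m-4}$, the exact-depth property at depth $z_m-4\equiv 1\not\equiv 2\pmod 3$ pushes $h$ into $\No_{z_m-3}$, and then $[\No_2,\No_{z_m-3}]=\No_{z_m}$ finishes.

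However, there is a genuine gap in your treatment of $u$. You claim that the elements $g$ with $[u,g]\in N_{z_m-1}$ are parametrised, up to $C_\No(a)$ and modulo $\No_{z_m-1}$, by a subquotient of $\No_{z_m-2}/\No_{z_m-1}$, and you propose to prove this by iterating (\ref{centralizer}) and using leading coefficients of commutators. This cannot work as stated: the nearest admissible level below $z_m-1$ for an element of depth $1$ is $\ell=z_m-3$, which only yields $g\in C_\No(a)\No_{z_m-4}$, and the leftover factor $h$ may have exact depth $z_m-4\equiv 1\equiv 1\pmod 3$ --- the \emph{same} residue as the depth of $a$. The exact-depth property of commutators fails precisely in this congruent case, so $[a,h]$ can sink below depth $z_m-2$ and such $h$ cannot be excluded; your "depth analysis" therefore does not pin down the preimage of $N_{z_m-1}$, and the claimed order-$p$ parametrisation is unjustified. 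The paper sidesteps this entirely by working one level deeper: $[a,g]\in\No_{z_m}$ does lie at an admissible level, forcing $g\in C_\No(a)\No_{z_m-1}$, and then $[a,\No_{z_m-1}]\le[\No_1,\No_{z_m-1}]=\No_{z_m+1}$ (congruent depths, so the index jumps) gives $[u,g]=1$ in $G$. Hence $\{[u,g]:g\in G\}\cap N_{z_m}=1$, which already produces an uncovered element of $N_{z_m-1}$; combined with your own subgroup observation it also yields your stronger claim that the image has order at most $p$, since a subgroup of $N_{z_m-1}\cong C_p\times C_p$ meeting $N_{z_m}$ trivially has order at most $p$. Replacing your $u$-argument by this one repairs the proof.
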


\begin{proof}
We first consider the element $u$, for which we prove that
\[
\{[u,g]\mid g\in G\} \cap N_{z_m}=1.
\]
To see this, assume that $[u,g]\in N_{z_m}$.
Since $a\in\No_1\smallsetminus \No_2$ is of order $p$, (\ref{centralizer}) yields
\[
C_{\No/\No_{z_m}}(a\No_{z_m}) = C_{\No}(a)\No_{z_m-1}/\No_{z_m}.
\]
Thus we can write $g=ch$, with $[u,c]=1$ and $h\in N_{z_m-1}$.
It follows that $[u,g]=[u,h]\in [G,N_{z_m-1}]=1$, since
$N_{z_m-1}$ is central in $G$ (it corresponds to the diamond 
$\No_{z_m-1}/\No_{z_m+1}$ in $\No$).

We prove the result for the element $v$ by showing that, whenever
$[v,g]\in N_{z_m-1}$, we actually have $[v,g]\in N_{z_m}$.
Since $b\in\No_2\smallsetminus \No_3$ is of order $p$, we have
\[
C_{\No/\No_{z_m-2}}(b\No_{z_m-2}) = C_{\No}(b)\No_{z_m-4}/\No_{z_m-2}.
\]
Thus we can write $g=ch$ with $[v,c]=1$ and $h\in N_{z_m-4}$, and consequently
$[v,g]=[v,h]$.
Now, since $[N_2,N_{z_m-3}]=N_{z_m}$ and the commutator of 
$v\in N_2\smallsetminus N_3$ with an element of
$N_{z_m-4}\smallsetminus N_{z_m-3}$ lies in $N_{z_m-2}\smallsetminus N_{z_m-1}$,
we conclude that $[v,g]\in N_{z_m}$, as desired.
\end{proof}

\begin{thm}
\label{nott p=3}
If $p=3$ then a quotient $\No/\No_k$ is a Beauville group if and only if $k\ge 6$ and
$k\ne z_m$ for all $m\ge 1$. 
\end{thm}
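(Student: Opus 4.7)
The plan is to mirror the structure of Theorem \ref{nott p>3}, with one essential new ingredient. The naive lift of a Beauville structure from $G/\Phi(G) \cong C_3 \times C_3$ used for $p \ge 5$ fails here, since the Frattini quotient has only four maximal subgroups and hence is not a Beauville group itself; the substitute is a more delicate construction using Lemmas \ref{intersection_2} and \ref{not covered}. For the `only if' direction, when $k \le 5$ the group $\No/\No_k$ has order at most $3^4$: the abelian cases $k \le 3$ are ruled out by Catanese, while the non-abelian cases $k=4,5$ follow from the result of \cite{BBF} that no $3$-group of order at most $3^4$ is Beauville. When $k = z_m$ for some $m \ge 1$, I would apply Proposition \ref{negative} exactly as in the proof of Theorem \ref{nott p>3}: Corollary \ref{p^mth_powers} shows that all elements outside $\MM_1/\No_{z_m} \cup \No_2/\No_{z_m}$ have order $p^{m+1} = \exp G$, while the power formula (\ref{powers Ni}) gives $|G^{p^m}| = p$.

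The heart of the argument is the construction of a Beauville structure on $G := \No/\No_{z_m+1}$ for each $m \ge 1$. First I would observe that $N_{z_m-1}$ is elementary abelian and central in $G$, because $[\No_{z_m-1},\No_1] \le \No_{z_m+1}$ by (\ref{comms Ni}) (using $z_m - 1 \equiv 1 \pmod{3}$ for $m \ge 1$) and $\No_{z_m-1}^p \le \No_{z_m+1}$ by (\ref{powers Ni}). By Lemma \ref{not covered}, I can then pick $r_u, r_v \in N_{z_m-1}$ with $r_u \notin \{[u,g] : g \in G\}$ and $r_v \notin \{[v,g] : g \in G\}$, and set $\{x_1,y_1\} = \{u,v\}$ and $\{x_2,y_2\} = \{uv^2 r_u r_v^{-1},\, v r_v\}$. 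Both pairs generate $G$ since their images mod $\Phi(G)$ are bases of $G/\Phi(G)$. Since $r_u, r_v$ are central and $v^3 = 1$, a direct computation gives $x_2 y_2 = u r_u$; hence $y_2$ and $x_2 y_2$ have order $p$, while $x_1 y_1 = uv$ and $x_2$ both lie in $M_\lambda \smallsetminus G'$ with $\lambda \ne 1$ and thus have order $p^{m+1}$ by Corollary \ref{pmth powers}.

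To verify the Beauville property, I would check the nine pairs in $\{u,v,uv\} \times \{x_2, y_2, x_2 y_2\}$. The two `same-line mod $\Phi(G)$' pairs $(u,\, u r_u)$ and $(v,\, v r_v)$ are resolved directly by Lemma \ref{intersection_2}. Five pairs are `different-line' pairs in which one element has order $p$; for these, any nontrivial intersection of conjugate cyclic subgroups would have image in $\langle x\Phi(G)\rangle \cap \langle y\Phi(G)\rangle = 1$, forcing the intersection into $\Phi(G)$, but the nontrivial powers of an order-$p$ element lie outside $\Phi(G)$. The remaining two pairs involve $uv$: for $(uv,\, x_2)$, both elements lie in distinct $M_\lambda$'s with $\lambda \ne 1$, so by Corollary \ref{pmth powers} their unique subgroups of order $p$ (contained in the central $N_{z_m-1}$ and hence conjugation-invariant) are different; for $(uv,\, y_2)$ and $(uv,\, x_2 y_2)$, the second element has order $p$ and sits in a distinct line from $uv$, so no power of $y_2$ or $x_2 y_2$ can equal a nontrivial element of $\langle uv \rangle^g$.

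Finally, for $k$ with $z_m + 1 < k \le z_{m+1} - 1$, I would lift the above structure via Lemma \ref{lifting structure}: the images of $a$ and $b$ retain order $p$ in $\No/\No_k$, and combining Corollary \ref{p^mth_powers} with Lemma \ref{all_elements} yields $(ab)^{p^\ell} \in \No_{z_\ell - 1} \smallsetminus \No_{z_\ell}$ for every $\ell$, from which $o(uv) = p^{m+1}$ in $\No/\No_k$ follows throughout the range $z_m + 1 \le k \le z_{m+1} - 1$. The main obstacle will be the nine-case verification in the previous paragraph, particularly the pair $(uv, x_2)$, where both elements have order $p^{m+1}$ and the naive `mod-$\Phi$' argument is insufficient; here one must track the central subgroups $M_\lambda^{p^m}$ explicitly via Corollary \ref{pmth powers}.
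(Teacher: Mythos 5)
Your proposal is correct and follows essentially the same route as the paper: rule out $k\le 5$ by the minimal order of a Beauville $3$-group and $k=z_m$ via Proposition \ref{negative}, build the structure on $\No/\No_{z_m+1}$ by perturbing the colliding generators with central elements supplied by Lemma \ref{not covered} and resolving the same-line pairs with Lemma \ref{intersection_2}, the order-$p$ pairs with the mod-$\Phi(G)$ argument, and the single pair of order-$p^{m+1}$ elements with Corollary \ref{pmth powers}, then lift to intermediate $k$ by Lemma \ref{lifting structure}. The only differences are cosmetic (the paper takes the second generating set $\{(uw)^{-1},vz\}$ where yours is $\{uv^2r_ur_v^{-1},vr_v\}$, yielding the same three lines mod $\Phi(G)$, and your case count ``five plus two'' is slightly off, though the arguments you give do cover all nine pairs).
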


\begin{proof}
Since the smallest order of a Beauville $3$-group is $3^5$, the quotient $\No/\No_k$ can only be a Beauville $3$-group if
$k\ge 6$; note that $6$ is the same as $z_1+1$ in this case.
Now, by arguing as in the proof of Theorem \ref{nott p>3}, it suffices to see that
$G=\No/\No_{z_m+1}$ is a Beauville group for every $m\ge 1$.
Recall that $N_k=\No_k/\No_{z_m+1}$ and $M_{\lambda}=\MM_{\lambda}/\No_{z_m+1}$.

Let $u$ and $v$ be the images of $a$ and $b$ in $G$, respectively.
By Lemma \ref{not covered}, there exist $w,z\in N_{z_m-1}$ such that
$w\not\in \{[u,g]\mid g\in G\}$ and $z\not\in \{[v,g]\mid g\in G\}$.
Observe that $w$ and $z$ are central elements of order $p$ in $G$.

We claim that $\{u,v\}$ and $\{(uw)^{-1},vz\}$ form a Beauville structure in $G$.
Let $X=\{u,v,uv\}$ and $Y=\{(uw)^{-1},vz,u^{-1}vw^{-1}z\}$.
Assume first that $x\in X$ is of order $p$, and let $y\in Y$.
If $\langle x\Phi(G) \rangle \ne \langle y\Phi(G) \rangle$ in $G/\Phi(G)$, then we get
$\langle x\rangle^g \cap \langle y \rangle^h=1$ for every $g,h\in G$, as in the proof of Theorem \ref{nott p>3}.
Otherwise, we are in one of the following two cases: $x=u$ and $y=(uw)^{-1}$, or
$x=v$ and $y=vz$.
Then the condition $\langle x \rangle^g \cap \langle y \rangle^h=1$ follows by combining Lemma \ref {intersection_2} and Lemma \ref{not covered}.

Since the same argument applies when $y\in Y$ is of order $p$, we are only left with the case when $x=uv$ and $y=u^{-1}vw^{-1}z$.
Now $x$ and $y$ lie in two different maximal subgroups, which are also different from $N_2$ and $M_1$.
By Corollary \ref{pmth powers}, both $x$ and $y$ are of order $p^{m+1}$ and
$\langle x^{p^m} \rangle \ne \langle y^{p^m} \rangle$.
Since $x^{p^m},y^{p^m}\in N_{z_m-1}$ are central in $G$, it follows that
$\langle x \rangle^g \cap \langle y \rangle^h=1$ for all $g,h\in G$ also in this case.
This completes the proof.
\end{proof}

Finally, we analyse the quotients of the form $\No/\WW$, where $\WW$ is an intermediate subgroup in a diamond
$\No_{kp+1}/\No_{kp+3}$, i.e. $\No_{kp+3}<\WW<\No_{kp+1}$.

\begin{thm}
\label{thm:quotient by intermediate}
Let $\WW$ be an intermediate subgroup in a diamond of the Nottingham group.
Then:
\begin{enumerate}
\item
If the diamond that contains $\WW$ is not distinguished, then $\No/\WW$ is a Beauville group.
\item
If the diamond that contains $\WW$ is distinguished, say $\No_{z_m+1}<\WW<\No_{z_m-1}$, then $\No/\WW$ is a Beauville group if and only if $m\ge 1$ or $m\ge 2$, according as $p>3$ or $p=3$, and furthermore $\WW \ne \No_{z_m}, \langle e,\No_{z_m+1} \rangle$, where $e$ is the automorphism given by $e(t)=t+t^{z_m}$.
\end{enumerate}
\end{thm}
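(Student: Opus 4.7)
The proof splits along the two parts of the statement. For part (i), I would produce a Beauville structure of $\No/\WW$ by lifting one from $\No/\No_{kp+1}$ via Lemma \ref{lifting structure}. The quotient $\No/\No_{kp+1}$ is Beauville by Theorem \ref{nott p>3} (or \ref{nott p=3}), since $kp+1\equiv 1\pmod p$ and $z_m\equiv 2\pmod p$ give $kp+1\ne z_m$ for all $m$. As $a$ and $b$ have order $p$ in $\No$ itself, the only order to verify is that of $ab$: in $\No/\No_{kp+1}$ it equals $p^{\ell_0}$, with $\ell_0$ minimal satisfying $z_{\ell_0}-1\ge kp+1$. The non-distinguished hypothesis rules out $z_{\ell_0}=kp+2$, while $z_m\equiv 2\pmod p$ rules out $z_{\ell_0}=kp+3$, hence $z_{\ell_0}-1\ge kp+3$ and $(ab)^{p^{\ell_0}}\in\No_{z_{\ell_0}-1}\subseteq\No_{kp+3}\subseteq\WW$, giving the required order preservation.

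For the negative part of (ii), the case $\WW=\No_{z_m}$ follows from Theorems \ref{nott p>3} and \ref{nott p=3}, and the threshold $m\ge n_0$ merely ensures $|\No/\WW|$ is above the minimum order for a Beauville $p$-group (for $p=3$ and $m=1$ the quotient has order $3^4<3^5$). For $\WW=\langle e,\No_{z_m+1}\rangle$ the identity $e\in\WW$ combined with Corollary \ref{p^mth_powers} yields $\MM_\lambda^{p^m}=\langle f\rangle$ in $\No/\WW$ for every $\lambda\in\F_p\setminus\{1\}$. A coordinate count in the abelianisation $\No/\No_3\cong\F_p^2$ shows that only two maximal subgroups of $\No/\WW$, namely $\No_2$ and $\MM_1$, have elements of order at most $p^m$, so any generating pair $\{x,y\}$ must place one of $x,y,xy$ in a maximal subgroup $\MM_\lambda$ with $\lambda\ne 1$; consequently $\langle f\rangle\subseteq\Sigma(x,y)$ for every generating pair, and no Beauville structure exists.

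For the positive part of (ii) with $\WW=\langle ef^i,\No_{z_m+1}\rangle$ and $i\in\{1,\ldots,p-1\}$, the key observation is that $\lambda_0:=i+1\ne 1$ and
\[
f_{\lambda_0}^{p^m}\equiv e^{(-i)^m}f^{(-i)^m i}\equiv (ef^i)^{(-i)^m}\pmod{\No_{z_m+1}}
\]
lies in $\WW$. Hence $\MM_{\lambda_0}$ joins $\No_2$ and $\MM_1$ as a third ``safe'' maximal subgroup in which elements have order at most $p^m$. I would choose the first pair as $\{a,b^k\}$, with $k\in\F_p^\times$ selected so that $ab^k\in\MM_{\lambda_0}$ (possible because $\No_2/\No_3\cong C_p$ is generated by $b$ and the diamond direction corresponding to $\lambda_0$ is reached by a suitable power). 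Then $a$, $b^k$, $ab^k$ all lie in safe maximal subgroups, and a filtration check---$(ab^k)^{p^{m-1}}$ has leading term at $t^{z_{m-1}}$ while $\langle f\rangle$ sits at the deeper level $t^{z_m+1}$---shows $\Sigma_1\cap\langle f\rangle=1$. For the second pair I would take $\{ab^{ks_2},ab^{ks_3}\}$ with distinct $s_2,s_3\in\F_p\setminus\{0,1\}$ when $p\ge 5$; the three maximal subgroups containing these elements are disjoint from $\{\No_2,\MM_1,\MM_{\lambda_0}\}$, and in each diamond $\No_{z_\ell-1}/\No_{z_\ell+1}$ the direction vectors $(1,i),(1,s_2i),(1,s_3i)$ are pairwise linearly independent over $\F_p$, precluding any non-trivial intersection between the corresponding cyclic subgroups.

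The main obstacle I anticipate is verifying $\Sigma_1\cap\Sigma_2=1$ in the positive direction, especially for $p=3$, where only one ``bad'' maximal subgroup is available and a second pair cannot be kept disjoint from the first in $\No/\Phi(\No)$. I expect an argument in the spirit of Lemma \ref{not covered}, exploiting Klopsch's centralizer identity \eqref{centralizer} to arrange generators whose conjugate powers miss those of the first pair. A secondary subtlety is controlling the action of conjugation on the order-$p$ subgroups of cyclic groups of the form $\langle (ab^k)^g\rangle$: while conjugation preserves the filtration and the direction in each diamond (the diamonds are central subquotients), it may shift the subgroup within the next layer of filtration, and this must be handled by a careful comparison at every diamond level below $z_m$.
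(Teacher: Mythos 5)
Your treatment of part (i) and of the negative half of part (ii) is essentially the paper's: you lift along Lemma \ref{lifting structure}, using the congruences $kp+1\equiv 1$ and $z_m\equiv 2\pmod p$ together with non-distinguishedness to push $(ab)^{p^{\ell_0}}$ into $\No_{kp+3}\le\WW$, and you rule out $\WW=\No_{z_m}$ and $\WW=\langle e,\No_{z_m+1}\rangle$ by the two-maximal-subgroup obstruction of Proposition \ref{negative}. (The paper lifts from $\No/\No_{z_m+1}$ with $m$ maximal such that $z_m+1<kp+3$ rather than from $\No/\No_{kp+1}$, but this is immaterial.)

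The positive half of part (ii) has a genuine gap, which you yourself flag. You correctly identify the decisive computation, namely that for $\WW=\WW_\alpha=\langle e_\alpha,\No_{z_m+1}\rangle$ with $\alpha\ne0$ one has $f_{1+\alpha}^{p^m}\equiv e_\alpha^{(-\alpha)^m}\pmod{\No_{z_m+1}}$, so that $\MM_{1+\alpha}$ becomes a third maximal subgroup whose elements have order at most $p^m$ in $\No/\WW$; and your first generating pair $\{a,b^{\alpha}\}$ is exactly the paper's. But from there you try to build the second pair and verify $\Sigma_1\cap\Sigma_2=1$ by hand inside $\No/\WW$, and this is where the argument stalls: for $p=3$ there are only four maximal subgroups and three of them are already occupied by $a$, $b^{\alpha}$ and $ab^{\alpha}$, so the second pair cannot be separated from the first modulo $\Phi(\No)$; and even for $p\ge5$ your proposed pair $\{ab^{ks_2},ab^{ks_3}\}$ consists of elements of order $p^{m+1}$ whose $p^m$-th powers all generate the single central subgroup $\No_{z_m-1}/\WW$ of order $p$, so the ``pairwise independent direction vectors'' argument does not by itself control the intersections at the deepest level, nor the interaction of conjugates across intermediate diamonds. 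The missing idea is that no such verification is needed: since $\WW=\langle(ab^{\alpha})^{p^m},\No_{z_m+1}\rangle$, the element $ab^{\alpha}$ has order exactly $p^m$ in $\No/\WW$, which is the same as its order in $\No/\No_{z_{m-1}+1}$; hence the entire Beauville structure of $\No/\No_{z_{m-1}+1}$ having $\{a,b^{\alpha}\}$ as one of its generating sets (a minor variant of Theorems \ref{nott p>3} and \ref{nott p=3}) lifts to $\No/\WW$ by one more application of Lemma \ref{lifting structure}, the hypothesis $m\ge1$ (resp.\ $m\ge2$ for $p=3$) guaranteeing that this smaller quotient is indeed a Beauville group.
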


\begin{proof}
(i)
Let $\No_{kp+3}<\WW<\No_{kp+1}$, and choose $m$ as large as possible such that $z_m+1<kp+3$.
By Theorems \ref{nott p>3} and \ref{nott p=3}, we have a Beauville structure in $\No/\No_{z_m+1}$ in which one of the sets of generators is
$\{a\No_{z_m+1},b\No_{z_m+1}\}$.
Now one can readily check that $a$, $b$ and $ab$ have the same order modulo $\WW$ and modulo $\No_{z_m+1}$, namely $p$, $p$ and $p^{m+1}$.
Hence $\No/\WW$ is a Beauville group by Lemma \ref{lifting structure}.

(ii)
By looking at the order of $\No/\WW$, it is clear that the conditions $m\ge 1$ if $p>3$ and $m\ge 2$ if $p=3$ are necessary for $\No/\WW$ to be a Beauville group.
On the other hand, the $p+1$ intermediate subgroups between $\No_{z_m+1}$ and $\No_{z_m-1}$ are $\No_{z_m}$ and the subgroups
$\WW_{\alpha}=\langle e_{\alpha}, \No_{z_m+1} \rangle$, where $\alpha\in\F_p$ and $e_{\alpha}(t)=t+t^{z_m}+\alpha t^{z_m+1}$.
We already know that $\No/\No_{z_m}$ is not a Beauville group, and the same argument shows that neither $\No/\WW_0$ is a Beauville group.
Thus it suffices to consider the case where $\WW=\WW_{\alpha}$ for some $\alpha\ne 0$, and prove that $\No/\WW$ is a Beauville group in that case.
If we define $f_{\lambda}$ as above by means of $f_{\lambda}(t)=t+t^2+\lambda t^3$, then Corollary \ref{p^mth_powers} yields that
\[
f_{1+\alpha}^{p^m} \equiv e_{\alpha}^{(-\alpha)^m}  \pmod{\No_{z_m+1}},
\]
and consequently $\WW=\langle f_{1+\alpha}^{p^m}, \No_{z_m+1} \rangle$.
Now observe that $f_{1+\alpha}\equiv ab^{\alpha} \pmod{\No_3}$ which, according to Lemma \ref{all_elements}, implies that
\[
f_{1+\alpha}^{p^m}\equiv (ab^{\alpha})^{p^m} \pmod{\No_{z_m+1}}.
\]
Hence $\WW=\langle (ab^{\alpha})^{p^m}, \No_{z_m+1} \rangle$.
In particular, the order of $ab^{\alpha}$ modulo $\WW$ is $p^m$.

Now, since $m\ge 1$ if $p>3$ and $m\ge 2$ if $p=3$, $\No/\No_{z_{m-1}+1}$ has a Beauville structure with $\{a\No_{z_{m-1}+1},b\No_{z_{m-1}+1}\}$ as one of the generating sets.
One can similarly see that there is a Beauville structure with $\{a\No_{z_{m-1}+1},b^{\alpha}\No_{z_{m-1}+1}\}$.
Since $o(a\No_{z_{m-1}+1})=o(a\WW)=p$, $o(b\No_{z_{m-1}+1})=o(b\WW)=p$ and
$o(ab^{\alpha}\No_{z_{m-1}+1})=o(ab^{\alpha}\WW)=p^m$, we can apply Lemma \ref{lifting structure} to conclude that
$\No/\WW$ has a Beauville structure, as desired.
\end{proof}

At this point, Theorem B is straightforward.

\begin{proof}[Proof of Theorem B]
Put $n_0=2$ or $5$ according as $p>3$ or $p=3$, as in the statement of Theorem B.
Then Theorems \ref{nott p>3} and \ref{nott p=3} provide Beauville groups which are quotients of the Nottingham group of order $p^n$ for every $n\ge n_0$, with the only exception of the values $n=p^m+\cdots+p+1$, where $m\ge 1$ if $p>3$ and $m\ge 2$ if $p=3$.
Now, according to the previous theorem, the missing orders can be obtained by factoring $\No$ with an adequate intermediate subgroup in the distinguished diamond $\No_{z_m-1}/\No_{z_m+1}$.
\end{proof}

We close the paper by showing that condition (i) in Proposition \ref{negative} cannot be relaxed.
More specifically, given a $2$-generator finite $p$-group $G$ of exponent $p^e$ in which $\Omega_{\{e-1\}}(G)$ is contained in the union of three maximal subgroups and
$|G^{p^{e-1}}|=p$, it may well happen that $G$ is a Beauville group.
To this end, consider an intermediate subgroup $\No_{z_m+1}<\WW<\No_{z_m-1}$ in a distinguished diamond of the Nottingham group, where $m\ge 1$ or $m\ge 2$, according as $p>3$ or $p=3$.
If $\WW \ne \No_{z_m}, \langle e,\No_{z_m+1} \rangle$ then $G=\No/\WW$ is a Beauville group by Theorem \ref{thm:quotient by intermediate}.
Also, as indicated in the proof of that theorem, we have $\WW=\langle (ab^{\alpha})^{p^m},\No_{z_m+1} \rangle$ for some $\alpha\ne 0$ in $\F_p$.
It then follows from Corollary \ref{pmth powers} that $\exp G=p^{m+1}$ and that $\Omega_{\{m\}}(G)$ is contained in the three maximal subgroups of $G$ that contain the images of $a$, $b$ and $ab^{\alpha}$.


\begin{thebibliography}{99}

\bibitem{arg}
D.E.\ Arganbright,
The power-commutator structure of finite $p$-groups,
\textit{Pacific J.\ Math.\/} \textbf{29} (1969), 11--17.

\bibitem{BBF}
N.\ Barker, N.\ Boston, and B.\ Fairbairn,
A note on Beauville $p$-groups,
\textit{Experiment.\ Math.\/} \textbf{21} (2012), 298--306.

\bibitem{BBPV}
N.\ Barker, N.\ Boston, N.\ Peyerimhoff, and A.\ Vdovina,
An infinite family of $2$-groups with mixed Beauville structures,
\textit{Int.\ Math.\ Res.\ Notices\/} \textbf{11} (2015), 3598--3618.

\bibitem{BCG}
I.\ Bauer, F.\ Catanese, and F.\ Grunewald,
Chebycheff and Belyi polynomials, dessins d'enfants, Beauville surfaces and group theory,
\textit{Mediterr.\ J.\ Math.\/} \textbf{3} (2006), 121--146.

\bibitem{bla}
N.\ Blackburn,
On a special class of $p$-groups,
\textit{Acta Math.\/} \textbf{100} (1958), 45--92.

\bibitem{bos}
N.\ Boston,
A survey of Beauville $p$-groups,
in Beauville Surfaces and Groups, editors I.\ Bauer, S.\ Garion, A.\ Vdovina,
\textit{Springer Proceedings in Mathematics \& Statistics\/}, Volume 123, Springer, 2015, pp.\ 35--40.

\bibitem{cam}
R.\ Camina,
The Nottingham Group,
in New Horizons in Pro-$p$ Groups, editors M.\ du Sautoy, D.\ Segal, A.\ Shalev,
\textit{Progress in Mathematics\/}, Volume 184, Birkh\"auser, 2000, pp.\ 205--221.

\bibitem{cat}
F.\ Catanese,
Fibered surfaces, varieties isogenous to a product and related moduli spaces,
\textit{Amer.\ J.\ Math.\/} \textbf{122} (2000), 1--44.

\bibitem{DDSMS}
J.D.\ Dixon, M.P.F.\ du Sautoy, A.\ Mann, and D.\ Segal,
\textit{Analytic Pro-$p$ Groups\/}, second edition,
Cambridge University Press, 1999.

\bibitem{fai}
B.\ Fairbairn,
Recent work on Beauville surfaces, structures and groups,
in Groups St Andrews 2013, editors C.M.\ Campbell, M.R.\ Quick, E.F.\ Robertson and C.M.\ Roney-Dougal,
\textit{London Mathematical Society Lecture Note Series\/}, Volume 422, 2015, pp.\ 225--241.

\bibitem{FMP}
B.\ Fairbairn, K.\ Magaard, and C.\ Parker,
Generation of finite quasisimple groups with an application to groups acting on Beauville surfaces,
\textit{Proc.\ Lond.\ Math.\ Soc.\/} (3) \textbf{107} (2013), 744--798.

\bibitem{FMPc}
B.\ Fairbairn, K.\ Magaard, and C.\ Parker,
Corrigendum: Generation of finite quasi\-simple groups with an application to groups acting on Beauville surfaces,
\textit{Proc.\ Lond.\ Math.\ Soc.\/} (3) \textbf{107} (2013), 1220.

\bibitem{fer}
G.A.\ Fern\'andez-Alcober,
Omega subgroups of powerful $p$-groups,
\textit{Israel J.\ Math.\/} \textbf{162} (2007), 75--79.

\bibitem{FJ}
Y.\ Fuertes and G.A.\ Jones,
Beauville surfaces and finite groups,
\textit{J.\ Algebra\/} \textbf{340} (2011), 13--27.

\bibitem{GLL}
S.\ Garion, M.\ Larsen, and A.\ Lubotzky,
Beauville surfaces and finite simple groups,
\textit{J.\ Reine Angew.\ Math.\/} \textbf{666} (2012), 225--243.

\bibitem{GJ}
G.\ Gonz\'alez-Diez and A.\ Jaikin-Zapirain,
The absolute Galois group acts faithfully on regular dessins and on  Beauville surfaces,
\textit{Proc.\ Lond.\ Math.\ Soc.\/} \textbf{111} (2015), 775--796.

\bibitem{GJ2}
J.\ Gonz\'alez-S\'anchez and A.\ Jaikin-Zapirain,
On the structure of normal subgroups of potent $p$-groups,
\textit{J.\ Algebra\/} \textbf{276} (2004), 193--209.

\bibitem{GW}
J.\ Gonz\'alez-S\'anchez and T.\ Weigel,
Finite $p$-central groups of height $k$,
\textit{Israel J.\ Math.\/} \textbf{181} (2011), 125--143.

\bibitem{GM}
R.\ Guralnick and G.\ Malle,
Simple groups admit Beauville structures,
\textit{J.\ London Math.\ Soc.\/} \textbf{85} (2012), 649--721.

\bibitem{hup}
B.\ Huppert,
\textit{Endliche Gruppen, I\/},
Springer, 1967.

\bibitem{jon}
G.\ Jones,
Beauville surfaces and groups: a survey,
in Rigidity and Symmetry, editors R.\ Connelly, A.I.\ Weiss, W.\ Whiteley,
\textit{Fields Institute Communications\/}, Volume 70, Springer, 2014, pp.\ 205--225.

\bibitem{khu}
E.I.\ Khukhro,
\textit{$p$-Automorphisms of finite $p$-groups\/},
Cambridge University Press, 1998.

\bibitem{klo}
B.\ Klopsch,
Automorphisms of the Nottingham group,
\textit{J.\ Algebra\/} \textbf{223} (2000), 37--56.

\bibitem{klo2}
B.\ Klopsch,
Normal subgroups in substitution groups of formal power series,
\textit{J.\ Algebra\/} \textbf{228} (2000), 91--106.

\bibitem{LM}
A.\ Lubotzky, A.\ Mann,
Powerful $p$-groups. I. Finite groups,
\textit{J.\ Algebra\/} \textbf{105} (1987), 484--505.

\bibitem{NOV}
M.F.\ Newman, E.A.\ O'Brien, M.R.\ Vaughan-Lee,
Groups and nilpotent Lie rings whose order is the sixth power of a prime,
\textit{J.\ Algebra\/} \textbf{278} (2004), 383--401.

\bibitem{OV}
E.A.\ O'Brien, M.R.\ Vaughan-Lee,
The groups with order $p^7$ for odd prime $p$,
\textit{J.\ Algebra\/} \textbf{292} (2005), 243--258.

\bibitem{rob}
D.J.S.\ Robinson,
\textit{A Course in the Theory of Groups\/},
second edition,
Springer, 1996.

\bibitem{SV}
J.\ Stix and A.\ Vdovina,
Series of $p$-groups with Beauville structure,
\textit{Monatsh.\ Math.\/} (2015),
doi:10.1007/s00605-015-0805-9,
arXiv:1405.3872 [math.GR].

\bibitem{suz2}
M.\ Suzuki,
\textit{Group Theory II\/},
Springer, 1986.

\bibitem{vau}
M.R.\ Vaughan-Lee,
Nilpotent Lie ring of order $p^6$,
\texttt{www.math.auckland.ac.nz\break/\~{}obrien/research/p7/p6.pdf}, 2002.

\bibitem{xu}
M.\ Xu,
A class of semi-$p$-abelian $p$-groups,
\textit{Kexue Tongbao\/} \textbf{27} (1982), 142--146.

\bibitem{yor}
I.O.\ York,
The exponent of certain finite $p$-groups,
\textit{Proc.\ Edinburgh Math.\ Soc.\/} \textbf{33} (1990), 483--490.

\bibitem{zas}
H.\ Zassenhaus,
\textit{The Theory of Groups\/},
Chelsea Publishing Company, 1949.

\end{thebibliography}
\end{document}